\definecolor{cadmiumgreen}{rgb}{0.0, 0.42, 0.24}
\newtheorem{thm}{Theorem}[section]
\newtheorem*{thm*}{Theorem}
\newenvironment{customthm}[1]
  {\innercustomthm}
  {\endinnercustomthm}
\newtheorem{prop}[thm]{Proposition}
\newtheorem{lem}[thm]{Lemma}
\newtheorem{cor}[thm]{Corollary}
\theoremstyle{definition}
\newtheorem{dfn}[thm]{Definition}
\newtheorem{eg}[thm]{Example}
\newtheorem{rmk}[thm]{Remark}
\newtheorem{conj}[thm]{Conjecture}
\newcommand{\CC}{\mathbb{C}}
\newcommand{\KK}{\mathbb{K}}
\newcommand{\RR}{\mathbb{R}}
\newcommand{\ZZ}{\mathbb{Z}}
\newcommand{\PP}{\mathbb{P}}
\DeclareMathOperator{\val}{val}
\DeclareMathOperator{\Sym}{Sym}
\DeclareMathOperator{\Div}{Div}
\DeclareMathOperator{\Eff}{Eff}
\DeclareMathOperator{\Pic}{Pic}
\DeclareMathOperator{\Jac}{Jac}
\DeclareMathOperator{\coker}{coker}
\DeclareMathOperator{\PL}{PL}
\DeclareMathOperator{\rS}{S}
\DeclareMathOperator{\Divisor}{\Delta}
\DeclareMathOperator{\red}{red}
\DeclareMathOperator{\stred}{st.red}
\DeclareMathOperator{\zeros}{\Divisor^{+}}
\DeclareMathOperator{\poles}{\Divisor^{--}}
\DeclareMathOperator{\br}{br} 
\DeclareMathOperator{\Br}{Br}
\DeclareMathOperator{\cone}{cone} 
\newcommand{\Xan}{X^{\mathrm{an}}}
\newcommand{\Ztwo}{Z^{(2)}} 
\newcommand{\Utwo}{U^{(2)}} 
\newcommand{\reddiv}[2]{\red_{#1} [#2]}
\newcommand{\reddivst}[2]{\red^\mathrm{st}_{#1} [#2]}
\newcommand{\streddiv}[2]{\stred_{#1} [#2]}
\newcommand{\potent}[2]{j_{#1}^{#2}}
\newcommand{\divfiber}[1]{{#1}^{-1}_{\Divisor}}
\newcommand{\Wstab}{W^\mathrm{st}}
\newcommand{\startv}{s}
\newcommand{\tailv}{t}
\newcommand{\en}{n}
\newcommand{\length}[1]{\ell({#1})}
\newcommand{\lengtheff}[1]{\ell_\text{eff}({#1})}
\begin{document}

\title[Tropical Weierstrass points]{The distribution of Weierstrass points on a tropical curve}
\author{David Harry Richman}

\thanks{This work was partially supported by NSF grant DMS-1600223}

\subjclass[2020]{14T15 (Primary), 05C22, 14H55, 57M12, 60B10, 94C05} 
\keywords{tropical geometry, Weierstrass point, metric graph, canonical measure, effective resistance}

\date{\today}

\begin{abstract}
We show that on a metric graph of genus $g$,
a divisor of degree $\en$ generically has $g(n-g+1)$ Weierstrass points.
For a sequence of generic divisors on a metric graph whose degrees grow to infinity, 
we show that the associated Weierstrass points become distributed according to the Zhang canonical measure.
In other words, the limiting distribution is determined by effective resistances on the metric graph.
This distribution result has an analogue for complex algebraic curves, due to Neeman,
and for curves over non-Archimedean fields, due to Amini.
\end{abstract}
\maketitle

\setcounter{tocdepth}{1}
\tableofcontents

\section{Introduction}
For any divisor class on an algebraic curve,
there is an associated finite set of Weierstrass points.
In this paper we study the set of   
(tropical) Weierstrass points 
associated to a divisor class
on an abstract tropical curve.
In particular, we ask 
\begin{itemize}
	\item[(A)] When is the set of Weierstrass points finite? If so, how many are there?
\end{itemize}
and
\begin{itemize}
	\item[(B)] How are these points distributed as the degree approaches infinity?
\end{itemize}
We show that, for any abstract tropical curve $\Gamma$, 
the Weierstrass locus is finite for a generic divisor class. 
Generically, the number of Weierstrass points depends only 
on the divisor degree and the genus of the underlying tropical curve.
We further prove that, for any degree-increasing sequence of such generic divisors, 
the Weierstrass points become distributed on $\Gamma$ according to the Zhang canonical measure.
This measure can be described in terms of effective resistances when interpreting $\Gamma$ as an electrical network of resistors.

We also define a stable Weierstrass locus which is finite for
an arbitrary divisor class, and compute its cardinality for 
a generic divisor class,
which depends only on the degree and genus.
The contents of this paper first appeared in the author's PhD thesis \cite{R-thesis}.

\subsection{Statement of results}
Given a compact, connected metric graph $\Gamma$ and a divisor $D$ of rank $r = r(D)$,
we define the {\em Weierstrass locus} ${ W(D) }$ as 
\[ 
	W(D) = \{ x\in \Gamma : \reddiv{x}{D} \geq (r+1)x \},
\]
where $\reddiv{x}{D}$ denotes the reduced divisor in $[D]$ with respect to $x$
(see Section~\ref{sec:trop-curves} for definitions).
The set $W(D)$ may fail to be finite.
This fact was first observed by Baker and Norine~\cite[Example 4.5]{Bak}, 
see also Example~\ref{eg:3loop-chain}.

For a divisor of degree $\en\geq g$, we define the {\em stable Weierstrass locus} of $D$ as
\[ \Wstab(D) = \{ x\in\Gamma : \streddiv{x}{D} \geq (n-g+1)x \}\]
where $\streddiv{x}{D}$ denotes the stable reduced divisor in $[D]$ with respect to $x$.
The stable Weierstrass locus is  finite for any divisor.
If $D$ has rank $r(D) = \en - g$, 
i.e. $D$ is nonspecial, 
then the stable Weierstrass locus is contained in $W(D)$.
In particular, this containment holds when the degree satisfies $\en \geq 2g - 1$.
See Section~\ref{sec:trop-curves} for definitions of 
rank, reduced divisor, and stable reduced divisor.

Our first result addresses the question of 
counting the number of Weierstrass points.
Here ``generic'' means 
on a dense open subset of the space of divisor classes.

\begin{customthm}{A}
\label{thm:w-finite}
Let $\Gamma$ be a compact, connected metric graph of genus $g$.
\begin{enumerate}[(a)]
\item 
For a generic divisor class of degree $n\geq g$,
the  Weierstrass locus $W(D)$ is finite with cardinality
\[ \# W(D) = g(n-g+1) .\]
For a generic divisor class of degree $n < g$,  $W(D)$ is empty.

\item
For an arbitrary divisor class of degree $\en\geq g$,
the stable Weierstrass locus $\Wstab(D)$ is a finite set with cardinality
\[ \# \Wstab(D) \leq g(\en-g+1),\]
and equality holds for a generic divisor class.
\end{enumerate}
\end{customthm}
\noindent Parts (a) and (b) 
of Theorem~\ref{thm:w-finite}
are connected by showing that $W(D) = \Wstab(D)$ for a generic divisor class.

The next main theorem  of our paper
describes the distribution of tropical Weierstrass points.
Here, note that the condition 
``$W_\en = W(D_\en)$ is a finite set''
is satisfied for a generic divisor class $[D_\en] \in \Pic^\en(\Gamma)$ by Theorem~\ref{thm:w-finite}.
\begin{customthm}{B}
\label{thm:equidistribution}
Let $\Gamma$ be a metric graph of genus $g$,
and let $\{ D_{\en} : \en\geq 1\} $ be a sequence of divisors on $\Gamma$
 with $\deg D_{\en} = \en$.
Let $W_{\en}$ be the Weierstrass locus of $D_{\en}$. 
Suppose each $W_\en$ is a finite set, and let
\[ \delta_{\en}  = \frac1{\en} \sum_{x\in W_{\en}} \delta_x\]
denote the normalized discrete measure on $\Gamma$ associated to $W_{\en}$,
	where $\delta_x$ is the Dirac measure at $x$. 
Then as $\en\to \infty$, the measures $\delta_{\en}$ converge weakly to the Zhang canonical measure $\mu$ on $\Gamma$.
\end{customthm}
The Zhang canonical measure is defined in Section \ref{sec:canonical}.
This measure has a succinct description in terms of effective resistances; see Corollary~\ref{cor:canonical-via-resistance}.
(Warning: we use a different normalization for $\mu$ than previous authors;
namely we have total measure $\mu(\Gamma) = g$ rather than $\mu(\Gamma) = 1$.)
We also obtain a quantitative version of this distribution 
result which specifies a bound on the  rate of convergence.
\begin{customthm}{C}
\label{thm:quant-equidistribution}
Let $\Gamma$ be a metric graph of genus $g$,
let $D_{\en}$ be a divisor of degree $\en > g$, 
and let $W_{\en} = W(D_\en)$ denote the  Weierstrass locus. 
Suppose $W_\en$ is finite.
Let $\mu$ denote the Zhang canonical measure on $\Gamma$.
\begin{enumerate}[(a)]
\item For any segment $e$ in $\Gamma$,
\begin{equation*}
\en \mu(e) - 2 g \leq \# (W_\en \cap e) \leq \en \mu(e) + g + 2 .
\end{equation*}

\item If $e$ is a segment of $\Gamma$ with  $\mu(e) > {2g}/{\en}$,
then $e$ contains at least one Weierstrass point of $D_n$.

\item For a fixed continuous function $f: \Gamma \to \RR$, as $n\to \infty$
\begin{equation*}
\frac1{\en}\sum_{x \in W_{\en}} f(x) = \int_\Gamma f(x) \mu(dx) + O\left(\frac1{\en}\right).
\end{equation*}
(The big-$O$ constant depends on $f$, but is independent of 
the divisors $D_n$.)

\end{enumerate}
\end{customthm}
It is likely that the bounds in part (a) can be improved.

\subsection{Previous work}
The set of ordinary Weierstrass points on a complex algebraic curve of genus $g\geq 2$ 
has been a classical object of study (see e.g. \cite{Mir}).
This is a set of $g^3 - g$ points on $X$ (counting with multiplicity) which are intrinsic to $X$ as an abstract curve, without reference to any (non-canonical) embedding of $X$ into an ambient space.
They form a useful tool, e.g. for proving that the automorphism group of such a curve is finite.
This notion naturally extends to higher Weierstrass points (or higher-order Weierstrass points), 
which is a finite set of points on $X$ associated to a choice of divisor class on $X$.
The number of higher Weierstrass points (counted with multiplicity) 
grows quadratically as a function of the degree of $[D]$.
In this paper, we focus on this more general notion of higher Weierstrass points.
We refer to higher Weierstrass points of $D$ simply as Weierstrass points of $D$.

The following useful intuition is given by Mumford \cite{Mum}:
the Weierstrass points associated to a divisor of degree $\en$
form a higher-genus analogue of the set of $\en$-torsion points on an elliptic curve.
Just as choosing a different origin for the group law on a genus one curve 
leads to a different set of torsion points,
choosing different degree $\en$ divisors will give you different sets of Weierstrass points.
The fact that $\en$-torsion points on a complex elliptic curve 
become ``evenly distributed'' as $\en$ grows large
leads one to ask whether the same phenomenon holds for Weierstrass points
on other curves. 

An answer 
was given by Neeman \cite{N}, 
who showed that for any complex curve (i.e. Riemann surface) of genus $g\geq 2$,
when $\en\to \infty$ the Weierstrass points of degree $\en$ divisors
become distributed according to the Bergman measure.
\begin{thm}[Neeman \cite{N}]
\label{thm:neeman}
Let $X$ be a compact Riemann surface of genus $g \geq 2$,
and let $\{ D_{\en} : \en\geq 1\} $ be a sequence of divisors on $X$ with $\deg D_{\en} = \en$.
Let $W_{\en}$ denote the Weierstrass locus of the divisor $D_{\en}$, and let
$ \delta_{\en}  = \frac1{g \en^2} \sum_{x\in W_{\en}} \delta_x $
denote the normalized discrete measure on $X$ associated to $W_{\en}$
(where $\delta_x$ is the Dirac measure at $x$). 
Then as $\en\to \infty$, the  measures $  \delta_{\en}$ converge weakly 
to the Bergman measure  on $X$.
\end{thm}
\noindent Before Neeman's result, Olsen \cite{Ols} showed that
given a positive-degree divisor $D$ on a complex algebraic curve 
$X$,
the union of the Weierstrass points of the multiples $nD$, 
over all $n\geq 1$, 
is dense in $X$ in the complex topology.

If one replaces the ground field $\CC$ with a non-Archimedean field, 
one may consider the same question of how Weierstrass points are distributed
inside the Berkovich analytification $\Xan$
of an algebraic curve,
say after retracting to a compact skeleton $\Gamma$.
This was addressed by Amini in  \cite{A-weier}.
Here the Weierstrass points are distributed
according to the {\em Zhang canonical admissible measure},
constructed by Zhang in \cite{Z}.
\begin{thm}[Amini \cite{A-weier}]
\label{thm:amini}
Let $X$ be a smooth proper curve of genus $g \geq 1$ over a complete, algebraically closed, non-Archimedean field $\KK$ with non-trivial valuation and residue characteristic $0$.
Let $\Gamma$ be a skeleton of the Berkovich analytification $\Xan$ with retraction map $\rho : \Xan \to \Gamma$.
Let $D$ be a positive-degree divisor on $X(K)$. 
Let $W_{\en}$ denote the Weierstrass locus of the divisor $\en D$, and let
$ \delta_{\en}  = \frac1{\# W_\en} \sum_{x\in W_{\en}} \delta_{\rho(x)} $
denote the normalized discrete measure on $\Gamma$ associated to $W_{\en}$
(where $\delta_x$ is the Dirac measure at $x$). 
Then as $\en\to \infty$, the  measures $  \delta_{\en}$ converge weakly to the Zhang canonical measure  on $\Gamma$,
up to a factor of $g$.
\end{thm}
Zhang's canonical measure does not have support on bridge edges, 
so it is independent of the choice of skeleton.
Zhang's construction was motivated 
 by Arakelov's pairing for divisors on a Riemann surface \cite{Ar},
for the purpose of answering questions in arithmetic geometry.
Here we use a definition of $\mu$ along more elementary lines from
Chinburg--Rumely \cite{CR} and  Baker--Faber \cite{BF}, 
using the notions of current flow and electric potential in a network of resistors.

In \cite{A-weier}
Amini raised the question of whether 
the distribution of Weierstrass points is possibly intrinsic to the metric graph $\Gamma$,
without needing to identify $\Gamma$ with the skeleton of some Berkovich curve $\Xan$.
One major obstacle to this idea is that on a metric graph, 
the  Weierstrass locus for a divisor may fail to be a finite set of points.
Our approach is to sidestep this issue by showing that finiteness does hold for a {\em generic} choice of divisor class.
With this assumption of genericity, we are able to show that 
distribution of Weierstrass points is intrinsic to $\Gamma$.

Baker~\cite{Bak} was the first to define and study ordinary Weierstrass points on graphs and on metric graphs,
and mentions several applications of number-theoretic significance.
\cite[Example 4.5]{Bak}, attributed to Norine, describes a family of metric graphs whose ordinary Weierstrass locus contains infinitely many points.
These results concerned Weierstrass points associated to the canonical divisor, 
without considering higher Weierstrass points for general divisors.

Earlier, Eisenbud--Harris \cite{EH} studied the limiting behavior of ordinary Weierstrass points along a family of smooth curves degenerating to a nodal curve.
This is connected to the present work via the familiar tropical perspective that a family of curves degenerating to a nodal curve may be encoded by the dual graph of the nodal curve, with edge lengths that record the speed of degeneration at each node.
If $\Gamma$ is a metric graph and $D$ is a generic divisor on $\Gamma$ in the sense of Theorem~\ref{thm:w-finite}, 
then 
the conclusions of Theorem~\ref{thm:w-finite} apply to the limit Weierstrass points of $(\mathcal X_t, \mathcal D_t)$
for {\em any} family of smooth curves $\mathcal X_t$ degenerating to a totally degenerate nodal curve with dual graph $\Gamma$, and family of nonspecial divisors $\mathcal D_t$ tropicalizing to $D$.
The number of Weierstrass points on an algebraic curve is $g(n - g + 1)^2$, 
while the number of tropical Weierstrass points is $g(n - g + 1)$.
This seems to suggest that in this scenario, the specialization map is $(n - g + 1)$-to-$1$ on Weierstrass points.
(See Appendix~\ref{sec:tropicalization}.)

Technical note: our tropical curves $\Gamma$ have no ``hidden genus'' at vertices and no infinite legs,
i.e. we restrict our attention to $\Xan$ with totally degenerate reduction and no punctures.

\subsection{Outline}
In Section~\ref{sec:trop-curves} we review background material on metric graphs and their divisor theory.
In Section~\ref{sec:canonical} we review the interpretation of a metric graph as an electrical resistor network,
and define Zhang's canonical measure.
In Section~\ref{sec:weierstrass} we define the Weierstrass locus 
and stable Weierstrass locus for a divisor on a metric graph, 
and give examples.
In Section~\ref{sec:finite} we 
prove that $W(D)$ is generically finite 
and compute its cardinality (Theorem~\ref{thm:w-finite}).
In Section~\ref{sec:equidistribution}, we prove results on the distribution of Weierstrass points 
on a metric graph (Theorems~\ref{thm:equidistribution} and \ref{thm:quant-equidistribution}).

\subsection{Notation}
Here we collect some notation which will be used throughout the paper.

\begin{tabular}{ll}
$\Gamma$ & a compact, connected metric graph \\

$\PL_\RR(\Gamma)$ & continuous, piecewise linear functions on  $\Gamma$ \\

$\PL_\ZZ(\Gamma)$ & continuous, piecewise $\ZZ$-linear functions on $\Gamma$ \\

$\rS(\Gamma)$ & ``well-behaved'' piecewise smooth  functions on $\Gamma$ \\

$\Divisor(f)$ & the principal divisor associated to a piecewise ($\ZZ$-)linear function $f$ \\

$D$ & a divisor on a metric graph or algebraic curve \\

$D_{\en}$  & a divisor of degree $\en$ \\

$K = K_\Gamma$ & the canonical divisor on $\Gamma$ \\

$r(D)$ & the Baker--Norine rank of $D$ \\

$\Div(\Gamma)$ & divisors on $\Gamma$ (with $\ZZ$-coefficients) \\

$\Div_\RR(\Gamma)$ & divisors on $\Gamma$  with $\RR$-coefficients, 
i.e. $ \Div(\Gamma)\otimes_\ZZ \RR$\\

$\Div^d(\Gamma)$ & divisors of degree $d$ on $\Gamma$ \\

$\Pic^d(\Gamma)$ & divisor classes of degree $d$ on $\Gamma$ \\

$\Sym^d(\Gamma)$  & effective divisors of degree $d$ on $\Gamma$ \\

$\Eff^d(\Gamma)$  & effective divisor classes of degree $d$ on $\Gamma$ \\


$[D]$ & a divisor class; the set of divisors linearly equivalent to $D$  \\

$|D|$ & the space of effective divisors linearly equivalent to $D$ \\

$\reddiv{x}{D}$  & the $x$-reduced divisor equivalent to $D$, where $x\in \Gamma$ \\

$\br[D]$ & the break divisor equivalent to $D$, where $D$ has degree $g$ \\

$\Br^g(\Gamma)$ & the space of break divisors on $\Gamma$ \\ 

$\mu = \mu_\Gamma$ &  the Zhang canonical measure on  $\Gamma$ \\

$G$ & a finite, connected graph with vertex set $V(G)$ and edge set $E(G)$ \\

$(G,\ell)$ & a combinatorial model for a metric graph, where $\ell : E(G) \to \RR_{\geq 0}$ \\

$\mathcal{T}(G)$ & the set of spanning trees of a graph $G$
\end{tabular}

\section{Abstract tropical curves}
\label{sec:trop-curves}
In this section we define metric graphs and linear equivalence of divisors on metric graphs. 
We use the terms ``metric graph'' and ``(abstract) tropical curve'' interchangeably.
We recall the Baker--Norine rank of a divisor, 
and state the Riemann--Roch theorem which is satisfied by this rank function.

\subsection{Metric graphs and divisors}
\label{subsec:metric-graphs}
A {\em metric graph} is a compact, connected metric space 
which comes from assigning positive real edge lengths to a finite connected combinatorial graph.
Namely, we construct a metric graph $\Gamma$  by taking 
a finite set of edges $E = \{e_i\}$,
each isometric to a real interval $e_i = [0,L_i]$ of length $L_i>0$,
gluing their endpoints to a finite set of vertices $V$,
and imposing the path metric.
The underlying combinatorial graph $G = (E,V)$ is called a {\em combinatorial model} for $\Gamma$.
We allow loops and parallel edges in a combinatorial graph $G$.
We say $e$ is a {\em segment} of $\Gamma$ if it is an edge in some combinatorial model. 

The {\em valence} $\val(x)$ of a point $x$ on a metric graph $\Gamma$ is defined to be the number 
of connected components of a sufficiently small punctured neighborhood of $x$.
Points in the interior of a segment of $\Gamma$ always have valence 2.
All points $x$ with $\val(x) \neq 2$ are contained in the vertex set of any combinatorial model.

The {\em genus} of a metric graph $\Gamma$ is its first Betti number as a topological space,
\[ g(\Gamma) = b_1(\Gamma) = \dim_\RR H_1(\Gamma,\RR).\]
If $G$ is a combinatorial model for $\Gamma$, the genus is equal to
$ g(\Gamma) = {\# E(G) - \# V(G) + 1}$.

A {\em divisor} on a metric graph $\Gamma$ is a finite formal sum of  
points of $\Gamma$ with integer coefficients.
The {\em degree} of a divisor is the sum of its coefficients; 
i.e.
for the divisor $D = \sum_{x\in \Gamma} a_x x $,
we have
${ \deg( D ) = \sum_{x\in\Gamma} a_x }$.
We let $\Div(\Gamma)$ denote the set of all divisors on $\Gamma$,
and let $\Div^d(\Gamma)$ denote the divisors of degree $d$.
We say a divisor is {\em effective} if all of its coefficients are non-negative;
we write $D\geq 0$ to indicate that $D$ is effective.
More generally, we write $D \geq E$ to indicate that $D-E$ is an effective divisor.
We let $\Sym^d(\Gamma)$ denote the set of effective divisors of degree $d$ on $\Gamma$.
$\Sym^d (\Gamma)$ inherits from $\Gamma$ the structure of a polyhedral cell complex of dimension~$d$.

We let $\Div_\RR(\Gamma)$ denote the set of divisors on $\Gamma$ with coefficients in $\RR$.
In other words, $\Div_\RR(\Gamma) = \Div(\Gamma)\otimes_\ZZ \RR$.

\subsection{Principal divisors and linear equivalence}
We define linear equivalence for divisors on metric graphs, 
following Gathmann--Kerber \cite{GK} and Mikhalkin--Zharkov \cite{MZ}.
This notion is analogous to linear equivalence of divisors on an algebraic curve,
where rational functions are replaced with piecewise $\ZZ$-linear functions. 

A {\em piecewise linear function} on $\Gamma$ is a continuous function $f: \Gamma \to \RR$ 
such that there is 
some combinatorial model for $\Gamma$ such that 
$f$ restricted to each edge is a linear function,
i.e. a function of the form
\[ f(x) = ax + b, \qquad a,b\in \RR ,\]
where $x$ is a length-preserving parameter on the edge.
We let $\PL_\RR(\Gamma)$ denote the set of all piecewise linear functions on $\Gamma$.

A {\em piecewise $\ZZ$-linear function} on $\Gamma$ is a piecewise linear function 
such that all its slopes are integers, 
i.e. $f$ restricted to each edge has the form
\[ f(x) = ax + b, \qquad a\in\ZZ,\, b\in \RR \]
(for some combinatorial model).
We let $\PL_\ZZ(\Gamma)$ denote the set of all piecewise $\ZZ$-linear functions on $\Gamma$.
The functions $\PL_\ZZ(\Gamma)$ are closed under the operations of addition,
multiplication by $\ZZ$,
and taking pairwise $\max$ and $\min$.

We let $UT_x\Gamma$  denote the 
unit tangent fan of $\Gamma$ at $x$,
which is the set of ``directions going away from $x$'' on $\Gamma$.
For $v \in UT_x\Gamma$,  
the symbol $\epsilon v$ for sufficiently small $\epsilon \geq 0$
means the point in $\Gamma$
 that is distance $\epsilon$ away from $x$ in the direction $v$.
For  $v\in UT_x\Gamma$
and a function $f: \Gamma \to \RR$   we let 
\[D_vf(x) = \lim_{\epsilon\to 0^+} \frac{f(x+\epsilon v)-f(x)}{\epsilon}\]
denote the slope of $f$ while travelling away from $x$ in the direction $v$
(if it exists).

Given 
$f \in \PL_\ZZ(\Gamma)$, 
we define the {\em principal divisor} $\Divisor(f) \in \Div^0(\Gamma)$ by
\[ \Divisor(f) = \sum_{x\in \Gamma} a_x x \qquad\text{where}\qquad a_x = \sum_{v\in UT_x \Gamma} D_v f(x). \]
In words, the coefficient in $\Divisor(f)$ of a point $x$ is equal to the sum of the outgoing slopes of $f$ at $x$.
On a given segment, this divisor is supported on the finite set of points at which $f$ is not linear, 
sometimes called the ``break locus'' of $f$.
If $\Divisor(f) = D - E$ where $D, E$ are effective divisors with disjoint support, 
then we call $D = \zeros(f)$ the {\em divisor of zeros of $f$} 
and $E = \poles(f)$ the {\em divisor of poles} of $f$.

We say two divisors $D, E$ are {\em linearly equivalent}, denoted $D\sim E$, 
if there exists a piecewise $\ZZ$-linear function $f$ such that
\[ \Divisor(f) = D - E.\]
Note that linearly equivalent divisors must have the same degree.
We let $[D]$ denote the linear equivalence class of divisor $D$, i.e.
\[ [D] = \{ E \in \Div(\Gamma) : E \sim D\} 
= \{D + \Divisor(f) : f\in \PL_\ZZ(\Gamma)\} .\]
We say a divisor class $[D]$ is {\em effective}, or write $[D]\geq 0$, 
if there is an effective representative ${ E\sim D },\, E\geq 0$ in the equivalence class.

We let $|D|$ denote the {\em (complete) linear system} of $D$, 
which is the set of effective divisors linearly equivalent to $D$.
We have
\begin{align*}
 |D| &= \{ E \in \Div(\Gamma) : E \sim D,\, E\geq 0\} \\
 &= \{D + \Divisor(f) : f\in \PL_\ZZ(\Gamma),\, \Divisor(f) \geq -D\} .
\end{align*}
Unlike $[D]$, the linear system $|D|$ is naturally a compact polyhedral complex, with topology induced by the inclusion
$|D| \subset \Sym^d(\Gamma)$.

\begin{rmk}[Linear equivalence as chip firing]
We sometimes speak of a degree $\en$ effective divisor on $\Gamma$ as a collection of $\en$ 
``chips'' placed on $\Gamma$,
in the sense of playing a board game with board $\Gamma$.
Changing the divisor $D$ to a linearly equivalent divisor $D'$
can be achieved through a sequence of ``chip firing moves''
where we choose a simple cut\footnote{
A {\em simple cut} is a collection of segments of $\Gamma$ such that
removing the interiors of these segments
disconnects $\Gamma$ into exactly two components.} 
of $\Gamma$ consisting of
$m$ segments of length $\epsilon$,
and on each edge move a chip from one end to the other.
\begin{figure}[h]
\begin{minipage}{0.45\textwidth}
	\centering
\begin{tikzpicture}[scale=0.8]
\coordinate (A) at (0,0);
\coordinate (B) at (1.3,0);
\coordinate (C) at (0,1.5);
\coordinate (D) at (1.3,1.5);

\draw (A) -- (B);
\draw (C) -- (D);
\draw (A) -- (C);
\draw (B) -- (D);
\draw[rounded corners=10] (A) -- +(-1,0) -- +(-1,1.5) -- (C);
\draw[rounded corners=10] (B) -- +(1,0) -- +(1,1.5) -- (D);

\foreach \pos in {(0.3,1.5),(1.3,0.7),(2.3,0.7)} {
	\fill \pos circle (2.5pt);
}
\end{tikzpicture}
\end{minipage}
\begin{minipage}{0.45\textwidth}
	\centering
\begin{tikzpicture}[scale=0.8]
\coordinate (A) at (0,0);
\coordinate (B) at (1.3,0);
\coordinate (C) at (0,1.5);
\coordinate (D) at (1.3,1.5);

\draw (A) -- (B);
\draw (C) -- (D);
\draw (A) -- (C);
\draw (B) -- (D);
\draw[rounded corners=10] (A) -- +(-1,0) -- +(-1,1.5) -- (C);
\draw[rounded corners=10] (B) -- +(1,0) -- +(1,1.5) -- (D);

\foreach \pos in {(0.3,1.5),(1.3,0.7),(2.3,0.7)} {
	\fill[color=white] \pos circle (2.5pt);
	\draw[color=lightgray] \pos circle (2.5pt);
}
\foreach \pos in {(0.7,1.5),(1.3,1.1),(2.3,1.1)} {
	\fill[color=black] \pos circle (2.5pt);
}
\end{tikzpicture}
\end{minipage}
\caption{Chip firing across a simple cut.}
\end{figure}
The piecewise linear function associated to such a chip firing move 
has slope $0$ outside the cut segments, and slope $1$ on the cut segments.
For more discussion see \cite[Remark 2.2]{ABKS}, \cite[Section 1.5]{BN} and the references therein.
\end{rmk}

Given a function $f \in \PL_\ZZ(\Gamma)$,
we may associate to $f$  
a 1-parameter family of effective divisors 
which ``linearly interpolate'' between the zeros $\zeros(f)$ and poles $\poles(f)$.
We can think of this construction as specifying a unique ``geodesic path'' 
between any two points in the complete linear system $|D|$.
This notion was previously studied by Luo \cite{Luo} under the name {\em t-path}.
Namely, we make the following definition.

\begin{dfn}
Suppose $f\in \PL_\ZZ(\Gamma)$ and $\lambda \in \RR$.
We let $\lambda \in \PL_\ZZ(\Gamma)$ also denote the constant function on $\Gamma$
by abuse of notation,
and we define the {\em tropical preimage} $\divfiber{f}(\lambda)$ 
by
\[ \divfiber{f}(\lambda) = \poles(f) + \Divisor(\max\{f ,\lambda\}). \] 
\end{dfn}
See Figure \ref{fig:interpolation} for an illustration.
The tropical preimage has the following equivalent description: consider the graph of $f$, which is a subset of $\Gamma \times \RR$, and attach to it half-infinite vertical rays pointing downwards (resp. upwards) at all zeros (resp. poles) of $f$.
Then $\divfiber{f}(\lambda)$ is obtained by intersecting this modified graph with the level set $\Gamma \times \{\lambda\}$,
and then projecting to $\Gamma$, when assigning appropriate multiplicities to the vertical rays and the resulting intersection.

Note that 
the tropical preimage is always an effective divisor, and that
${ \divfiber{f}(\lambda) = \poles(f) }$ for $\lambda$ sufficiently large and 
${ \divfiber{f}(\lambda) = \zeros(f) }$ for $\lambda$ sufficiently small.
%
It is clear from definition that for any $\lambda$, 
the tropical preimage
$\divfiber{f}(\lambda)$ is 
linearly equivalent to $\zeros(f)$ and to $\poles(f)$.
\begin{figure}[h]
\centering
\begin{tikzpicture}[scale=0.5]
	\coordinate (A) at (0,2.8);
	\coordinate (B) at ($(A) + (1.5,0)$);
	\coordinate (C) at ($(B) + (1.5,1.5)$);
	\coordinate (D) at ($(C) + (3,-3)$);
	\coordinate (E) at ($(D) + (2,0)$);
	\coordinate (F) at ($(E) + (0.5,-0.5)$);
	\coordinate (G) at ($(F) + (0.5,0.5)$);
	\coordinate (H) at ($(G) + (2,0)$);
	
	\coordinate (Ad) at (0,0);
	\coordinate (Bd) at (1.5,0);
	\coordinate (Dd) at (6,0);
	\coordinate (Ed) at (8,0);
	\coordinate (Fd) at (8.5,0);
	\coordinate (Gd) at (9,0);
	\coordinate (Hd) at (11,0);
	\coordinate (Au) at (0,5);
	\coordinate (Hu) at (11,5);
	
	\coordinate (Af) at ($(A) + (0,-1.1)$);
	\coordinate (Hf) at ($(Af) + (11,0)$);
	\coordinate (INTf) at ($(Hf) + (-5.4,0)$);
	\coordinate (INTd) at (5.6,0);
	
	\draw (A) -- (B) -- (C) -- (D) -- (E);
	\draw (E) -- (F) -- (G) -- (H);
	\draw[dotted] (B) -- (Bd);
	\draw[dotted] (C) -- +(0,0.5);
	\draw[dotted] (D) -- (Dd);
	\draw[dotted] (E) -- +(0,3.5);
	\draw[dotted] (F) -- (Fd);
	\draw[dotted] (G) -- +(0,3.5);
	
	\draw[line width=12pt, color=white] (Ad) -- (Hd);
	\draw (Ad) -- (Hd);
	\draw[dashed] (Af) -- (Hf);
	\draw[color=blue, line width=2pt, opacity=0.4] (A) -- (B) -- (C) 
		-- node[above right, opacity=0.9] {$\max\{f, \lambda\}$} (INTf) 
		-- (Hf);
	
	\node[left] at (A) {$f$};
	\node[left] at (Af) {$\lambda$};
	\node[left] at (Ad) {$\Gamma$};
	
	\foreach \coord in {Bd,INTd,Ed,Gd} {
		\fill (\coord) circle (4pt);
	}
	\foreach \coord in {Bd,INTd,Ed,Gd} {
		\fill[opacity=0.2] (\coord) + (0,1.7) circle (5pt);
	}
\end{tikzpicture}
\caption{Linear interpolation showing the divisor $\divfiber{f}(\lambda)$.}
\label{fig:interpolation}
\end{figure}

\begin{dfn}
\label{dfn:geodesic}
A path $\gamma: [0,1] \to |D|$ is {\em geodesic}
if $\gamma$ is injective and its image is equal to a set of tropical preimages up to translation;
in other words, the image of $\gamma$ is
\[
\{D_0 +  \divfiber{f}(\lambda) : \lambda \in \RR \}
\]
where 
$D_0$ is a fixed divisor and 
$f\in \PL_\ZZ(\Gamma)$ is a function satisfying
$
\Divisor(f) = \gamma(0) - \gamma(1) .
$
(The function $f$ is determined up to an additive constant by $\gamma(0), \gamma(1)$.)
\end{dfn}

\subsection{Reduced divisors}
\label{sec:reduced-prop}

Given a  point $q\in \Gamma$,
the {\em $q$-reduced divisor} $\reddiv{q}{D}$ is the unique divisor in $[D]$ which is effective away from $q$, 
and which minimizes a certain {energy function} among such representatives.
Intuitively, $\reddiv{q}{D}$ is the divisor in $[D]$ whose chips are
 ``as close as possible'' to the basepoint $q$.
We defer giving the full definition until Section \ref{sec:red-div},
following  \cite[Appendix A]{BS}. 
For now, we state these important properties of the reduced divisor:
\begin{enumerate}
\item[(RD1)] $[D] \geq 0$ if and only if $\reddiv{q}{D} \geq 0$
\item[(RD2)]
for any integer $m$, $\reddiv{q}{mq + D} = mq + \reddiv{q}{D}$
\item[(RD3)] the degree of $\reddiv{q}{D}$ away from $q$ is at most $g$, the genus of $\Gamma$
(follows from Riemann's inequality, Corollary~\ref{cor:riemann})
\item[(RD4)] for a fixed effective divisor $D$, the map $\Gamma \to |D|$ sending $q \mapsto \reddiv{q}{D}$ is continuous 
(due to  Amini \cite[Theorem 3]{A-reddiv}).

\end{enumerate}

\subsection{Picard group and Abel--Jacobi}
\label{sec:picard}

We let $\Pic(\Gamma)$ denote the {\em Picard group} of $\Gamma$,
which is the abelian group of all linear equivalence classes of divisors on $\Gamma$.
The addition operation on $\Pic(\Gamma)$ is induced from addition of divisors in $\Div(\Gamma)$.
In other words, $\Pic(\Gamma)$ is the cokernel of the map $\Divisor$
sending a piecewise $\ZZ$-linear function to its associated principal divisor:
\[ \PL_\ZZ(\Gamma) \xrightarrow{\Divisor} \Div(\Gamma) \to \Pic(\Gamma) \to 0 .\]
The kernel of $\Divisor$ is the set of constant functions on $\Gamma$.

Since the degree of a divisor class is well-defined, we have a disjoint union decomposition
\[ \Pic(\Gamma) = \bigsqcup_{d\in \ZZ}\Pic^d(\Gamma).\]
The degree-0 component $\Pic^0(\Gamma)$ is a compact abelian group, 
and each $\Pic^d(\Gamma)$ is a torsor for $\Pic^0(\Gamma)$.

\begin{thm}[Abel--Jacobi for metric graphs]
\label{thm:abel-jacobi}
Let $\Gamma$ be a metric graph of genus $g$.
Then for any degree $d$, there is a homeomorphism of topological spaces
\[ \Pic^d( \Gamma) \cong \RR^g / \ZZ^g .\]
When $d = 0$, this is an isomorphism of compact abelian topological groups.
\end{thm}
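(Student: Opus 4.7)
The plan is to construct the tropical Abel--Jacobi map and show it descends to an isomorphism onto a real $g$-torus. Since each $\Pic^d(\Gamma)$ is naturally a torsor for $\Pic^0(\Gamma)$ (pick any $D_0$ of degree $d$ and translate via $[D] \mapsto [D - D_0]$), it suffices to treat the case $d = 0$ as an isomorphism of topological groups. The topology on $\Pic^d$ itself is pinned down using the reduced-divisor section: for a fixed basepoint $q$, property (RD3) gives a continuous injection $\Pic^d(\Gamma) \hookrightarrow \Eff^d(\Gamma)$, and the torsor isomorphism transports continuity appropriately.

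Next I would construct the target torus. Fix a combinatorial model with oriented edges. Let $\Omega(\Gamma)$ denote the space of \emph{harmonic $1$-forms} $\omega = \sum_e c_e\, de$ satisfying Kirchhoff's current law at every vertex; this is an $\RR$-vector space of dimension $g = |E| - |V| + 1$. The cycle-integration pairing $H_1(\Gamma,\ZZ) \times \Omega(\Gamma) \to \RR$, $(\gamma,\omega) \mapsto \oint_\gamma \omega$, identifies $H_1(\Gamma,\ZZ)$ with a rank-$g$ period lattice $\Lambda \subset \Omega(\Gamma)^*$. Set $\Jac(\Gamma) := \Omega(\Gamma)^*/\Lambda$, which is manifestly homeomorphic (and as a topological group isomorphic) to $(S^1)^{\times g}$.

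Fixing a basepoint $q\in\Gamma$, define the Abel--Jacobi map
\[ \Phi_q\colon \Div^0(\Gamma) \longrightarrow \Jac(\Gamma), \qquad \sum n_i p_i \longmapsto \Bigl[\omega \mapsto \sum n_i \int_q^{p_i}\omega\Bigr]. \]
It is well-defined modulo $\Lambda$ because changing path representatives alters the value by integration around an integer cycle; continuity and the homomorphism property are immediate. Surjectivity follows by observing that the composition $p \mapsto \Phi_q(p - q)$ restricted to the interior of an edge of $\Gamma$ is an affine embedding into $\Jac(\Gamma)$, and as $p$ varies over $\Gamma$ the derivatives span all $g$ independent directions of $\Omega(\Gamma)^*$; thus the image contains a neighborhood of the origin, and a connected subgroup of a torus containing such a neighborhood is the whole torus.

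The main step is to identify $\ker\Phi_q$ with the principal divisors $\Divisor(\PL_\ZZ(\Gamma))$. For the easy inclusion, given $f\in\PL_\ZZ(\Gamma)$ with $\Divisor(f) = D$, an edge-by-edge integration-by-parts reduces $\Phi_q(D)(\omega)$ to a $\ZZ$-linear combination of periods of $\omega$, hence lies in $\Lambda$; the integrality of the slopes of $f$ is exactly what is needed. For the reverse, given $D\in \ker\Phi_q$, the Laplace problem $\Divisor(f) = D$ admits a solution $f\in \PL_\RR(\Gamma)$, unique up to an additive constant (existence uses $\deg D = 0$ together with connectedness of $\Gamma$). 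The hypothesis $\Phi_q(D)\in \Lambda$ then translates, via the nondegenerate duality between $H_1(\Gamma,\ZZ)$ and harmonic $1$-forms, into the statement that every edge slope of $f$ is an integer, so $f\in\PL_\ZZ(\Gamma)$ and $D$ is principal. The main obstacle here is making this last translation precise: one must carefully track how the period lattice encodes the individual edge slopes of $f$, which is where the nontrivial bookkeeping lies. Once completed, the induced continuous bijective homomorphism $\Pic^0(\Gamma)\to\Jac(\Gamma)$ between compact topological groups is automatically a topological isomorphism, and the torsor structure transports this to each $\Pic^d(\Gamma)$ using (RD3).
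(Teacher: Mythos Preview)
Your proposal is essentially correct and follows the same route the paper points to: the paper's proof is just a citation to Mikhalkin--Zharkov together with the one-line summary $\Pic^0(\Gamma) = H^1(\Gamma,\RR)/H_1(\Gamma,\ZZ)^\vee \cong \RR^g/\ZZ^g$, and what you have written is a reasonable fleshing-out of that argument.

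Two small points. First, your appeal to (RD3) to pin down the topology on $\Pic^d$ is misplaced: as stated in this paper, (RD3) concerns continuity of $q\mapsto \reddiv{q}{D}$ for \emph{fixed} $D$, not continuity of $[D]\mapsto \reddiv{q}{D}$ for fixed $q$; moreover the reduced-divisor section only lands in $\Eff^d$ when $[D]$ is effective, which fails for $d<g$. You do not need this device anyway---the topology on $\Pic^0$ is simply the quotient topology from $\Div^0$ (or, equivalently, the one transported from $\Jac(\Gamma)$ once you have the bijection), and compactness of the torus plus continuity of $\Phi_q$ already forces the induced bijection to be a homeomorphism. Second, your surjectivity sketch is fine once you make explicit that it is the \emph{subgroup generated} by the image of $p\mapsto \Phi_q(p-q)$, not the image itself, that contains a neighborhood of the identity. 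The step you flag as ``nontrivial bookkeeping'' is indeed where the work lies, but your outline of it (solve $\Divisor(f)=D$ over $\RR$, then use the lattice condition to force integrality of slopes) is the standard and correct strategy.
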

\begin{proof}
See Mikhalkin--Zharkov \cite{MZ}.
The proof follows the same idea as the classical Abel-Jacobi theorem,
to show that $\Pic^0(\Gamma) = H^1(\Gamma,\RR) / H_1(\Gamma,\ZZ)^{\vee} \cong \RR^g / \ZZ^g$.
\end{proof}

We let $\Eff^d(\Gamma)$ denote the set of divisor classes on $\Gamma$ of degree $d$ which have an effective representative. 
In other words, $\Eff^d(\Gamma)$ is the image of $\Sym^d(\Gamma)$ 
under the cokernel map $\Div(\Gamma) \to \Pic(\Gamma)$ (or its degree-$d$ restriction):
\[ \begin{tikzcd}
\Sym^d(\Gamma) \arrow[r] \arrow[d] & \Div^d(\Gamma) \arrow[d,"\coker\Divisor"] \\
\Eff^d(\Gamma) \arrow[r] & \Pic^d(\Gamma).
\end{tikzcd} \]
The space $\Eff^d(\Gamma)$ is naturally a polyhedral complex of pure dimension $d$
when ${ 0 \leq d \leq g }$
(``pure'' due to Gross--Shokrieh--T\'{o}thm\'{e}r\'{e}sz~\cite{GST}).
As a particularly important case, 
the {\em theta divisor} $\Theta = \Theta(\Gamma)$ is the space $\Theta = \Eff^{g - 1}(\Gamma)$, 
which lives inside $\Pic^{g - 1}(\Gamma) $ as a codimension 1 polyhedral complex.

\begin{rmk}
\label{rmk:real-div}
The map $\Divisor:\PL_\ZZ(\Gamma) \to \Div(\Gamma)$ 
is also known as the {\em metric graph Laplacian} on $\Gamma$.
This comes from identifying $\Div(\Gamma)$ with the set of finite integer combinations of Dirac delta measures on $\Gamma$,	via
\[ D = \sum_{i=1}^\en a_i x_i 
\quad\longleftrightarrow\quad 
\delta = \sum_{i=1}^\en a_i \delta_{x_i} \]
so that $\Divisor(f)$ coincides with the distributional second derivative $-\frac{d^2}{dx^2}f(x)$,
at least for $x$ in the interior of an edge.
The definition of 
metric graph Laplacian naturally extends to piecewise linear functions on $\Gamma$ with arbitrary {\em real} slopes, 
if we also allow real-valued coefficients in the divisor $\Divisor(f)$.
This yields a map
\[ \PL_\RR(\Gamma) \xrightarrow{\Divisor} \Div_\RR(\Gamma) .\]
The cokernel of this map is less interesting (e.g. it does not tell us the genus of $\Gamma$); 
it is simply the degree function $\Div_\RR(\Gamma) \xrightarrow{\text{deg}} \RR$.
We will see why this is the cokernel in Section \ref{sec:voltage} on voltage functions.
This fits in the short exact sequence
\[ 0 \to \RR \xrightarrow{\text{const}} \PL_\RR(\Gamma)\xrightarrow{\Divisor} \Div_\RR(\Gamma) \xrightarrow{\text{deg}} \RR \to 0.\]
(Compare to the integral case
\[ 0 \to \RR \xrightarrow{\text{const}} \PL_\ZZ(\Gamma)\xrightarrow{\Divisor} \Div(\Gamma) 
   \xrightarrow{} \Pic(\Gamma)  \to 0 \]
where $\Pic(\Gamma) \cong \ZZ \times (S^1)^g$.)
\end{rmk}

\subsection{Rank and Riemann--Roch}
\label{sec:rank}
We recall the definition of the rank of a divisor on a metric graph,
which is due to Baker and Norine \cite{BN} (originally for divisors on a combinatorial graph)
and extended to metric graphs by  Gathmann--Kerber \cite{GK} and Mikhalkin--Zharkov \cite{MZ}.
The rank function is a natural way to extend  the important distinction 
between effective and non-effective divisor classes on a metric graph.
Divisor classes with larger rank are in a sense ``further away'' from the set of non-effective divisor classes,
where distance between divisors is given by adding or subtracting single points.


The {\em rank} $r(D)$ of a divisor $D$ on $\Gamma$ is defined as
\[ r(D) = \max\{r \geq 0 : [D-E]\geq 0 \text{ for all }E\in \Sym^r(\Gamma) \}\]
if $[D]$ is effective, and $r(D) = -1$ otherwise.
Equivalently,
\[ r(D) = \begin{cases}
-1 & \text{if $[D]$ is not effective}, \\
1 + \displaystyle\min_{x\in \Gamma} \{ r(D - x) \} & \text{if $[D]$ is effective}.
\end{cases}\]
This second definition inductively gives the rank of a divisor in terms of divisors of smaller degree; 
the base case is the set of non-effective divisor classes.\footnote{
By Riemann's inequality, Corollary~\ref{cor:riemann},
 a non-effective divisor class has degree at most $g-1$.}
Note that the rank of a divisor $D$ depends only on its linear equivalence class. 

The {\em canonical divisor} $K = K_\Gamma$ on a metric graph $\Gamma$ is defined as
\begin{equation} 
\label{eq:canonical-divisor}
K = \sum_{x\in \Gamma} (\val(x) - 2) \cdot x.
\end{equation}
The degree of the canonical divisor is $\deg K = 2g - 2$, which agrees with the degree of a canonical divisor on an algebraic curve.

\begin{thm}[Riemann--Roch for metric graphs]
\label{thm:riemann-roch}
Let $\Gamma$ be a metric graph of genus $g$, and let $K$ be the canonical divisor on $\Gamma$.
For any divisor $D$ on $\Gamma$,
\[ r(D) - r(K-D) = \deg(D) + 1 - g.\]
\end{thm}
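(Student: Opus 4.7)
The plan is to follow the combinatorial approach of Baker--Norine, as extended to metric graphs by Gathmann--Kerber and Mikhalkin--Zharkov. Writing $h^0(D) := r(D) + 1$, the Riemann--Roch identity becomes
\[ h^0(D) - h^0(K - D) = \deg D + 1 - g. \]
First I would reformulate the rank via the equivalent description
\[ r(D) + 1 = \min \{\deg E : E \geq 0 \text{ and } [D - E] \text{ is not effective}\}, \]
which follows inductively from the second definition of $r$ combined with property (RD1); this version is best suited for the pivot argument that follows.

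The heart of the proof is to exhibit a single divisor $\nu$ of degree $g - 1$ for which both $\nu$ and $K - \nu$ are non-effective. I would fix a combinatorial model $G = (V, E)$ for $\Gamma$, equip $V$ with a total order $<$ to produce an acyclic orientation (each edge pointing from the smaller to the larger endpoint), and set
\[ \nu = \sum_{v \in V} (\indeg(v) - 1) \cdot v. \]
Counting edges gives $\deg \nu = \#E - \#V = g - 1$, and since $K = \sum_v (\val(v) - 2) v$ one has $K - \nu = \sum_v (\operatorname{outdeg}(v) - 1) v$, which is the orientation divisor for the reverse order. To see $r(\nu) = -1$, I would show that $\nu$ is already $q$-reduced at the unique source $q$ of the orientation using a metric version of Dhar's burning algorithm: for any closed set $A \subset \Gamma$ disjoint from $q$, some boundary point of $A$ receives more in-edges from $\Gamma \setminus A$ than it sends into $A$, blocking a chip-firing move on $A$. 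Since the $q$-coefficient of $\nu$ is $-1$, property (RD1) gives $|\nu| = \emptyset$. The identical argument applied to the reversed orientation yields $r(K - \nu) = -1$.

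From this key lemma, Riemann--Roch follows by a symmetric pairing argument. To show $r(D) - r(K - D) \geq \deg D + 1 - g$, I would take an effective $E$ of minimum degree $r(D) + 1$ with $[D - E]$ non-effective, and use the non-effectiveness of $\nu$ (respectively $K - \nu$) to construct a complementary effective divisor $F$ with $[(K - D) - F]$ non-effective and $\deg F \leq g - \deg D + r(D)$; this yields $r(K - D) + 1 \leq g - \deg D + r(D)$, equivalent to the desired inequality. The reverse inequality follows by swapping the roles of $D$ and $K - D$, and combining the two gives the asserted equality.

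The main obstacle is the key lemma: verifying that the orientation divisor $\nu$ is $q$-reduced on a metric graph requires adapting Dhar's burning algorithm from the purely combinatorial setting so that it correctly handles points in the interior of segments (and so that both coefficients of $\nu$ and the auxiliary chip-firing moves respect the metric structure). This adaptation is by now standard, but the existence of such a $\nu$ is genuinely the content of Riemann--Roch; the remaining combinatorics in the deduction above is essentially bookkeeping around the pivot.
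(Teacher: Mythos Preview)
The paper does not give its own proof here; it simply cites Gathmann--Kerber and Mikhalkin--Zharkov, who adapt the Baker--Norine combinatorial argument to metric graphs. Your sketch is a faithful outline of exactly that approach (orientation divisor $\nu$, Dhar's algorithm to show $r(\nu)=r(K-\nu)=-1$, then the symmetric pivot), so it agrees with what the paper invokes.
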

\begin{proof}
See Gathmann--Kerber \cite[Proposition 3.1]{GK} and Mikhalkin--Zharkov \cite[Theorem 7.3]{MZ}, 
which both adapt the arguments of Baker--Norine \cite{BN} for the case of combinatorial graphs.
\end{proof}
\begin{cor}[Riemann's inequality for metric graphs]
\label{cor:riemann}
For a divisor $D$ on a metric graph of genus $g$,
\[ r(D) \geq \deg( D) - g.\]
\end{cor}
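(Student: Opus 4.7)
The plan is to derive this inequality as an immediate consequence of the Riemann--Roch theorem stated just above, by using the trivial lower bound on the rank of any divisor.

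First, I would recall that by the definition of Baker--Norine rank given in Section \ref{subsec:rank}, we have $r(E) \geq -1$ for every divisor $E$ on $\Gamma$, since the rank is defined to be $-1$ precisely when $[E]$ is not effective and is a non-negative integer otherwise. In particular, applying this to the divisor $K - D$, we obtain $r(K-D) \geq -1$.

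Next, I would apply Riemann--Roch to $D$, which gives the identity
\[ r(D) = r(K-D) + \deg(D) + 1 - g. \]
Substituting the bound $r(K-D) \geq -1$ into the right-hand side yields
\[ r(D) \geq -1 + \deg(D) + 1 - g = \deg(D) - g, \]
which is the desired inequality.

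There is no real obstacle here; the content of the corollary is entirely contained in Riemann--Roch together with the definitional fact that the rank function takes values in $\{-1, 0, 1, 2, \dots\}$. The only thing worth flagging is that the inequality is an equality whenever $K - D$ is non-effective, which in particular happens as soon as $\deg D > 2g - 2$.
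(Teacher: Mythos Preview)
Your proof is correct and is exactly the approach taken in the paper: apply Riemann--Roch and use the trivial bound $r(K-D) \geq -1$.
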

\begin{proof}
This follows from Riemann--Roch since $r(K-D) \geq -1$. 
\end{proof}
By Riemann's inequality, any divisor $D$ satisfies $r(D) \geq \max\{ {\deg(D) - g}, -1\}$.
We say $D$ is {\em nonspecial} if $r(D)$ is equal to this lower bound.

\subsection{Break divisors and ABKS decomposition}
\label{sec:break-div}
When a divisor $D$ has degree $g$, 
there is a canonical representative of $[D]$ without any choice of basepoint,
using the concept of break divisor.
This notion was introduced by Mikhalkin--Zharkov~\cite{MZ} 
and studied extensively by An--Baker--Kuperberg--Shokrieh~\cite{ABKS}.
We review some of their results in this section.

A {\em break divisor} is an effective divisor of degree $g$
(the genus) which can be constructed in the following manner:
choose a combinatorial model $G = (V,E)$ for $\Gamma$
and choose a spanning tree $T$ of $G$, 
then place one chip on each edge in the complement $E \backslash E(T)$.
(Note that $E\backslash E(T)$ contains exactly $g$ edges.)
Placing a chip on the endpoint of an edge is allowed.

The set of break divisors does not depend on the choice of combinatorial model.
We use $\Br^g(\Gamma)$ 
to denote the set of all break divisors on $\Gamma$.
We may view $\Br^g(\Gamma)$ as a topological space,
using the topology induced from the inclusion in
$\Sym^g(\Gamma)$.

\begin{eg}
In Figure~\ref{fig:break}
we show three examples of break divisors, on the left, 
and three examples of non-break divisors, on the right,
on a genus $3$ metric graph.
\begin{figure}[h]
\begin{minipage}{0.40\textwidth}
	\centering
	\begin{tikzpicture}[scale=0.6]
		\coordinate (A) at (0,0);
		\coordinate (B) at (1.3,0);
		\coordinate (C) at (0,1.5);
		\coordinate (D) at (1.3,1.5);

		\draw (A) -- (B);
		\draw (C) -- (D);
		\draw (A) -- (C);
		\draw (B) -- (D);
		\draw[rounded corners=5] (A) -- +(-1,0) -- +(-1,1.5) -- (C);
		\draw[rounded corners=5] (B) -- +(1,0) -- +(1,1.5) -- (D);

		\foreach \pos in {(0,0.7),(-1,0.7),(1.3,0.9)} {
			\fill \pos circle (3pt);
		}
	\end{tikzpicture}
	\qquad
	\begin{tikzpicture}[scale=0.6]
		\coordinate (A) at (0,0);
		\coordinate (B) at (1.3,0);
		\coordinate (C) at (0,1.5);
		\coordinate (D) at (1.3,1.5);

		\draw (A) -- (B);
		\draw (C) -- (D);
		\draw (A) -- (C);
		\draw (B) -- (D);
		\draw[rounded corners=5] (A) -- +(-1,0) -- +(-1,1.5) -- (C);
		\draw[rounded corners=5] (B) -- +(1,0) -- +(1,1.5) -- (D);

		\foreach \pos in {(0.5,1.5),(1.3,0.7),(-1,0.7)} {
			\fill \pos circle (3pt);
		}
	\end{tikzpicture}\\
	\begin{tikzpicture}[scale=0.6]
		\coordinate (A) at (0,0);
		\coordinate (B) at (1.3,0);
		\coordinate (C) at (0,1.5);
		\coordinate (D) at (1.3,1.5);

		\draw (A) -- (B);
		\draw (C) -- (D);
		\draw (A) -- (C);
		\draw (B) -- (D);
		\draw[rounded corners=5] (A) -- +(-1,0) -- +(-1,1.5) -- (C);
		\draw[rounded corners=5] (B) -- +(1,0) -- +(1,1.5) -- (D);

		\foreach \pos in {(0,1.5),(1.3,1.5)} {
			\fill \pos circle (3pt);
		}
		\node[above] at (C) {$2$};
	\end{tikzpicture}
\end{minipage}
\begin{minipage}{0.1\textwidth}
\quad
\end{minipage}
\begin{minipage}{0.40\textwidth}
	\centering
	\begin{tikzpicture}[scale=0.6]
		\coordinate (A) at (0,0);
		\coordinate (B) at (1.3,0);
		\coordinate (C) at (0,1.5);
		\coordinate (D) at (1.3,1.5);

		\draw (A) -- (B);
		\draw (C) -- (D);
		\draw (A) -- (C);
		\draw (B) -- (D);
		\draw[rounded corners=5] (A) -- +(-1,0) -- +(-1,1.5) -- (C);
		\draw[rounded corners=5] (B) -- +(1,0) -- +(1,1.5) -- (D);

		\foreach \pos in {(0.5,0.0),(0.5,1.5),(1.3,0.7)} {
			\fill \pos circle (3pt);
		}
	\end{tikzpicture}
	\qquad
	\begin{tikzpicture}[scale=0.6]
		\coordinate (A) at (0,0);
		\coordinate (B) at (1.3,0);
		\coordinate (C) at (0,1.5);
		\coordinate (D) at (1.3,1.5);

		\draw (A) -- (B);
		\draw (C) -- (D);
		\draw (A) -- (C);
		\draw (B) -- (D);
		\draw[rounded corners=5] (A) -- +(-1,0) -- +(-1,1.5) -- (C);
		\draw[rounded corners=5] (B) -- +(1,0) -- +(1,1.5) -- (D);

		\foreach \pos in {(0.5,1.5),(1.3,0.7),(2.3,0.7)} {
			\fill \pos circle (3pt);
		}
	\end{tikzpicture} \\
	\begin{tikzpicture}[scale=0.6]
		\coordinate (A) at (0,0);
		\coordinate (B) at (1.3,0);
		\coordinate (C) at (0,1.5);
		\coordinate (D) at (1.3,1.5);

		\draw (A) -- (B);
		\draw (C) -- (D);
		\draw (A) -- (C);
		\draw (B) -- (D);
		\draw[rounded corners=5] (A) -- +(-1,0) -- +(-1,1.5) -- (C);
		\draw[rounded corners=5] (B) -- +(1,0) -- +(1,1.5) -- (D);

		\foreach \pos in {(1.3,1.5),(1.3,0.0)} {
			\fill \pos circle (3pt);
		}
		\node[above] at (D) {$2$};
	\end{tikzpicture}
\end{minipage}
\caption{Break divisors (left) and non-break divisors (right).}
\label{fig:break}
\end{figure}
\end{eg}

For a divisor class $[D]$ whose degree is $g$, the genus of the underlying curve,
there is a unique representative of $[D]$ which is a break divisor.
The following result combines results of
Mikhalkin--Zharkov~\cite[Corollary 6.6]{MZ}
and An--Baker--Kuperberg--Shokrieh~\cite[Theorem 1.1]{ABKS}.
\begin{thm}[Break divisors representatives]
\label{thm:break-div}
Let $\Gamma$ be a metric graph of genus~$g$. 
\begin{enumerate}[(a)]
\item Every divisor class $[D]\in \Pic^g(\Gamma)$ contains a unique break divisor,
which we denote $\br[D]$.

\item The map $\br : \Pic^g(\Gamma) \to \Sym^g(\Gamma)$
sending a divisor class to its break divisor representative
is continuous and injective.
Its image is the space of all break divisors $\Br^g(\Gamma)$. 

\item The map $\br : \Pic^g(\Gamma) \to \Sym^g(\Gamma)$
is the unique continuous section of the map 
$[-]: \Sym^g(\Gamma) \to \Pic^g(\Gamma)$ taking
an effective divisor to its linear equivalence class.
Namely, $\br(-)$ is the unique continuous map such that the composition
\begin{equation*}
\Pic^g(\Gamma) \xrightarrow{\br} \Sym^g(\Gamma) \xrightarrow{[-]} \Pic^g(\Gamma)
\end{equation*}
is the identity homeomorphism.
\end{enumerate}
\end{thm}

If we choose a combinatorial model $(G,\ell)$ for the metric graph $\Gamma$,
An--Baker--Kuperberg--Shokrieh \cite{ABKS} showed that
the theory of break divisors implies a nice combinatorial decomposition
of $\Pic^g(\Gamma)$.
($\Pic^g(\Gamma)$ is defined in Section~\ref{sec:picard}.)

\begin{thm}[ABKS decomposition, see {\cite[Section 3.2]{ABKS}}]
\label{thm:ABKS}
Suppose $\Gamma = (G,\ell)$ is a metric graph with a combinatorial model.
Let $\mathcal T(G)$ denote the set of spanning trees of $G$.
Then
\begin{equation*}
 \Pic^g(\Gamma) = \bigcup_{T \in \mathcal T(G)} C_T
\end{equation*}
where 
\[ C_T = \{ [x_1 + \cdots + x_g] :  E(G) \backslash E(T) = \{ e_1, \ldots, e_g\},\, x_i \in e_i \}\]
denotes the set of divisor classes represented by summing 
a point from each edge of $G$ not in $T$.
The cells $C_T$ have disjoint interiors, as $T\in \mathcal T(G)$ varies.

For fixed $T$, if we parametrize each edge $e_i\not\in E(T)$ as the closed real interval $[0, \ell(e_i)]$,
there is a natural surjective map $\prod_{i=1}^g [0, \ell(e_i)] \to C_T$.
This map always restricts to a homeomorphism on the respective interiors
$\prod_{i=1}^g (0,\ell(e_i)) \to C_T^\circ$,
but may be non-injective on the boundary.
\end{thm}
The proof is to combine Theorem~\ref{thm:break-div}
with the definition of break divisor,
using the auxiliary data of the spanning tree.
Since $\Pic^g(\Gamma)$ is canonically homeomorphic to $\Br^g(\Gamma)$,
we may view Theorem~\ref{thm:ABKS} as a decomposition of $\Br^g(\Gamma)$.

\begin{rmk}
If  we take the combinatorial model for $\Gamma$ to be sufficiently subdivided,
then   for each $T = G \backslash \{ e_1,\ldots, e_g\}$,
the surjection
$\prod_{i=1}^g [0,\ell(e_i)]\to C_T$ 
is a (global) homeomorphism.
In particular, for this to hold  it suffices that $G$ has girth~$> g$
(i.e.  every cycle contains more than $g$ edges). 
A necessary condition is that $G$ has no loops or parallel edges (if $g \geq 2$).
\end{rmk}

\begin{eg}
\label{eg:abks}
Consider the metric graph shown on the left side of Figure~\ref{fig:abks}.
Its minimal combinatorial model $\Gamma = (G,\ell)$ 
contains two vertices and three edges.
The associated ABKS decomposition of $\Pic^2(\Gamma)$ is shown on the right side of Figure~\ref{fig:abks};
segments on the boundary are glued to the parallel boundary segment.
There are three cells $C_T$, corresponding to the three spanning trees in $G$.

Here $\Pic^2(\Gamma)$ is homeomorphic to a 
torus (cf. Theorem~\ref{thm:abel-jacobi}).
Each cell $C_T$ is homeomorphic to a rectangle with a pair of opposite vertices glued together.
\begin{figure}[h]
\begin{minipage}{0.35\textwidth}
	\centering
\begin{tikzpicture}[scale=1.0]
		\coordinate (A) at (0,0);
		\coordinate (C) at (0,2.0);
		\coordinate (E1) at (-1,1);
		\coordinate (E2) at (0,1);
		\coordinate (E3) at (1.5,1);
	
		\draw (A) -- (C);
		\draw[rounded corners=12] (A) -- ++(-1,0) -- ++(0,2.0) -- (C);
		\draw[rounded corners=12] (A) -- ++(1.5,0) -- ++(0,2.0) -- (C);
	\end{tikzpicture}
\end{minipage}
\begin{minipage}{0.55\textwidth}
	\centering
\begin{tikzpicture}[scale=1.0]
		\coordinate (O) at (0,0);
		\coordinate (Q) at (-1.3,1);
		\coordinate (R) at (1.3,2);
		\coordinate (S) at (0,3);
		\coordinate (Ol) at ($(O) + (-1,0)$);
		\coordinate (Ql) at ($(Q) + (-1,0)$);
		\coordinate (Sl) at ($(S) + (-1,0)$);
		\coordinate (Or) at ($(O) + (3,0)$);
		\coordinate (Rr) at ($(R) + (3,0)$);
		\coordinate (Sr) at ($(S) + (3,0)$);

		\draw (O) -- (R) -- (S) -- (Q) -- cycle;
				\draw (Ql) -- (Q);
		\draw (R) -- (Rr);
		\begin{scope}[decoration={
			markings,
			mark=at position 0.5 with {\arrow{Latex[scale width=2]}}
		}]
			\draw[postaction={decorate}] (Ol) -- (Or);
			\draw[postaction={decorate}] (Sl) -- (Sr);
\end{scope}
		\begin{scope}[decoration={
			markings,
			mark=at position 0.5 with {\arrow{Latex[open,scale width=2]}}
		}]
			\draw[dotted,postaction={decorate}] (Ql) -- (Sl);
			\draw[dotted,postaction={decorate}] (Or) -- (Rr);
		\end{scope}
		\begin{scope}[decoration={
			markings,
			mark=at position 0.5 with {\arrow{>>}}
		}]
			\draw[dotted,postaction={decorate}] (Ol) -- (Ql);
			\draw[dotted,postaction={decorate}] (Rr) -- (Sr);
		\end{scope}

		\begin{scope}[shift={(-0.1,1.1)}, scale=0.4]
			\coordinate (A) at (0,0);
			\coordinate (C) at (0,2.0);
			\coordinate (E1) at (-1,1);
			\coordinate (E2) at (0,1);
			\coordinate (E3) at (1.5,1);
		
			\draw (A) -- (C);
			\draw[rounded corners=6] (A) -- ++(-1,0) -- ++(0,2.0) -- (C);
			\draw[rounded corners=6] (A) -- ++(1.5,0) -- ++(0,2.0) -- (C);
		
			\foreach \P in {E1,E2} {
				\fill (\P) circle (4pt);
			}
			\end{scope}
			\begin{scope}[shift={(2.0,0.5)}, scale=0.4]
				\coordinate (A) at (0,0);
				\coordinate (C) at (0,2.0);
				\coordinate (E1) at (-1,1);
				\coordinate (E2) at (0,1);
				\coordinate (E3) at (1.5,1);
			
				\draw (A) -- (C);
				\draw[rounded corners=6] (A) -- ++(-1,0) -- ++(0,2.0) -- (C);
				\draw[rounded corners=6] (A) -- ++(1.5,0) -- ++(0,2.0) -- (C);
			
				\foreach \P in {E1,E3} {
					\fill (\P) circle (4pt);
				}
				\end{scope}
				\begin{scope}[shift={(2.0,2.1)}, scale=0.4]
					\coordinate (A) at (0,0);
					\coordinate (C) at (0,2.0);
					\coordinate (E1) at (-1,1);
					\coordinate (E2) at (0,1);
					\coordinate (E3) at (1.5,1);
				
					\draw (A) -- (C);
					\draw[rounded corners=6] (A) -- ++(-1,0) -- ++(0,2.0) -- (C);
					\draw[rounded corners=6] (A) -- ++(1.5,0) -- ++(0,2.0) -- (C);
				
					\foreach \P in {E2,E3} {
						\fill (\P) circle (4pt);
					}
					\end{scope}
				\end{tikzpicture}
\end{minipage}
\caption{Metric graph $\Gamma$, on left, and ABKS decomposition of $\Pic^2(\Gamma)$, on right.} 
\label{fig:abks}
\end{figure}
\end{eg}

\begin{prop}
Let $q\in \Gamma$ be an arbitrary basepoint on a genus $g$ metric graph.
\begin{enumerate}[(a)]
\item 
For a generic divisor class $[D]$ of degree $g$,
the reduced divisor $\reddiv{q}{D} $ is equal to the break divisor
$ \br[D]$.

\item 
For a generic divisor class $[D]$ of degree $n$,
the reduced divisor $\reddiv{q}{D}$
is equal to
\[ \reddiv{q}{D} = (n-g)q + E\]
where $E = \br[D - (n-g)q]$ is a break divisor.
\end{enumerate}
\end{prop}
\begin{proof}
(a) The reduced divisor $\reddiv{q}{D}$ is an effective divisor 
in the divisor class $[D]$, as is the break divisor $\br[D]$.
For a generic divisor class of $[D]$ of degree $g$, there is a unique effective divisor in the divisor class~\cite[Theorem 4.20]{ABKS}.

(b) This follows from (a) and property (RD2) in Section~\ref{sec:reduced-prop}.
\end{proof}

The following definition is used later to define the stable Weierstrass locus of a divisor, Definition~\ref{dfn:wstab}.
\begin{dfn}
\label{dfn:stable-reduced-div}
Given a divisor class $[D]$
on $\Gamma$ and a point $x \in \Gamma$,
the {\em stable reduced divisor} 
$\streddiv{x}{D}$
is
\begin{equation}
\label{eq:stable-reduced-div}
\streddiv{x}{D} = (n-g)x + \br[D-(n-g)x] .
\end{equation}
In other words, the stable reduced divisor map $\stred_x : \Pic(\Gamma) \to \Div(\Gamma)$
is determined by the two conditions:
\begin{enumerate}
\item[(SD1)] 
$\streddiv{x}{D} = \br[D]$
for every divisor class 
$[D]$ of degree $g$, and

\item[(SD2)] 
$\streddiv{x}{D + mx} = mx + \reddiv{x}{D}$ for every $[D]$ and every integer $m$.
\end{enumerate}
\end{dfn}

\section{Canonical measure and resistor networks}
\label{sec:canonical}
In this section we recall the definition of the Zhang canonical measure on a metric graph~\cite{Z} 
via the perspective of resistor networks following Baker--Faber \cite{BF}. 




\subsection{Voltage function}
\label{sec:voltage}
We  view a metric graph $\Gamma$ as a resistor network by interpreting an edge of length $L$ as a resistor of resistance $L$.

On a resistor network we may send current from one point to another.
On a given segment, the voltage drop across the segment is equal to the resistance (i.e. length) of the segment 
multiplied by the amount of current passing through the segment---this is Ohm's law.
Under an externally-applied current, 
the flow of current within the network is determined by Kirchhoff's circuit laws: 
the current law says that the sum of directed currents out of any point is equal to zero 
(accounting for external currents),
and the voltage law says that the sum of directed voltage differences around any closed loop is equal to zero.
It is a well-known mathematical result that Kirchhoff's circuit laws can be solved uniquely 
for any externally-applied current flow which satisfies conservation of current
(i.e. internal current flows are unique). 
To some, it is also a well-known empirical result.

Our convention is that current flows from higher voltage to lower voltage.

\begin{dfn}[physics version]
Given points $y,z\in \Gamma$,
the {\em voltage function} (or unit potential function)
$  \potent{z}{y} : \Gamma \to \RR $
is defined by
\[ \potent{z}{y}(x) = \text{voltage at $x$ when sending one unit of current from $y$ to $z$,}\]
such that 
$\potent{z}{y}(z) = 0$, 
i.e. 
the network is ``grounded'' at $z$.
\end{dfn}

\begin{dfn}[math version] 
Given points $y,z\in \Gamma$,
the {\em unit potential function} (or voltage function)
$\potent{z}{y}$ is the unique function in $\PL_\RR(\Gamma)$
satisfying the conditions
\[ \Divisor(\potent{z}{y}) = z - y \in \Div_{\RR}^0(\Gamma) \quad\text{and}\quad \potent{z}{y}(z) = 0.\]
\end{dfn}
For the existence and uniqueness of $\potent{z}{y}$, see Theorem 6 and Corollary 3 of Baker--Faber \cite{BF}.
Note that they use the notation $j_z(y,-)$ for $\potent{z}{y}(-)$.

The function $\potent{z}{y}$ satisfies the following properties:
\begin{enumerate}
\item[(V1)] for any $x\in \Gamma$, we have $0 =\potent{z}{y}(z) \leq \potent{z}{y}(x) \leq \potent{z}{y}(y)$;
\item[(V2)] $\potent{z}{y}(x)$ is piecewise linear in $x$;
\item[(V3)] $\potent{z}{y}{(x)}$ is continuous in $x$, $y$, and $z$.
\end{enumerate}

\begin{prop}
\label{prop:voltage-sym}
The voltage function $\potent{z}{y}$ obeys the following symmetries.
\begin{enumerate}[(a)]
\item 
\label{it:3term}
For any three points $x,y,z\in \Gamma$, we have
$\potent{z}{y}(x) = \potent{z}{x}(y)$. 

\item 
\label{it:4term}
For any four points $x,y,z,w \in \Gamma$, we have
$
\potent{z}{y}(x) - \potent{z}{y}(w) = \potent{w}{x}(y) - \potent{w}{x}(z)
$.
\end{enumerate}
\end{prop}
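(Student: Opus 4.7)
The plan is to deduce both symmetries from a single ``Green's identity'' for the metric-graph Laplacian: for any $f, g \in \PL_\RR(\Gamma)$,
\[
\sum_{x \in \Gamma} g(x)\, [\Divisor(f)]_x \;=\; \sum_{x \in \Gamma} f(x)\, [\Divisor(g)]_x,
\]
where $[D]_x$ denotes the coefficient of $x$ in a divisor $D$ (so both sums are finite). Once this identity is in hand, the proposition drops out in a line.

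To establish the identity, I would fix a combinatorial model on which both $f$ and $g$ are affine along every edge. On an edge $e$ with endpoints $u,v$, oriented from $u$ to $v$ with directional slopes $s_f = D_{u\to v} f$ and $s_g = D_{u \to v}g$, the endpoints of $e$ contribute $-s_f\, g(u) + s_f\, g(v) = s_f s_g\, L(e)$ to the left-hand side, since the coefficient $[\Divisor(f)]_x$ is the sum of outgoing slopes at $x$. This expression is symmetric in $f$ and $g$, so $e$ contributes the same amount to the right-hand side; summing over edges yields the identity. (Equivalently, both sides equal the Dirichlet pairing $\sum_e \int_e f'g'\,dt$.)

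Granted the identity, part (b) is immediate: set $f = \potent{z}{y}$ and $g = \potent{w}{x}$, so that $\Divisor(f) = z - y$ and $\Divisor(g) = w - x$. The identity specializes to
\[
\potent{w}{x}(z) - \potent{w}{x}(y) \;=\; \potent{z}{y}(w) - \potent{z}{y}(x),
\]
which rearranges to the four-term symmetry in (b). Part (a) is then the special case $w = z$ of (b), combined with the grounding normalization $\potent{z}{y}(z) = \potent{z}{x}(z) = 0$. I do not anticipate a serious obstacle; the only real care is the bookkeeping of signs in the edge-by-edge computation, which reduces to applying the definition of $\Divisor(f)$ as the sum of outgoing slopes consistently at each endpoint. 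An alternative route to (a) via uniqueness of the voltage function is available, but the Green's identity delivers both parts uniformly.
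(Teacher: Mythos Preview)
Your argument is correct and self-contained. The paper does not actually prove this proposition: it simply cites Baker--Faber \cite[Theorem 8]{BF} for the four-term ``Magical Identity'' in (b) and observes that (a) follows by setting $w=z$. Your Green's-identity route is the standard proof (and is essentially what Baker--Faber do), so you are supplying the argument the paper omits rather than taking a genuinely different path.

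One bookkeeping remark: in your edge computation, the contribution of the oriented edge $e=(u,v)$ to $\sum_x g(x)\,[\Divisor(f)]_x$ is $g(u)\cdot s_f + g(v)\cdot(-s_f) = -s_f s_g\, L(e)$, not $+s_f s_g\, L(e)$; with the paper's sign convention the outgoing slope at $u$ along $e$ is $+s_f$ and at $v$ it is $-s_f$. This sign slip is harmless for the proof, since the expression is symmetric in $f$ and $g$ either way (equivalently, both sides of your identity equal $-\sum_e \int_e f'g'\,dt$ rather than $+\sum_e \int_e f'g'\,dt$).
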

\begin{proof}
See Baker--Faber \cite[Theorem 8]{BF}; they refer to \eqref{it:4term} as the ``Magical Identity.''
Note that \eqref{it:3term} follows from \eqref{it:4term} by setting $z=w$.
\end{proof}

\begin{rmk}
We many interpret any function $f\in \PL_\RR(\Gamma)$
as a voltage function on $\Gamma$,
which results from the externally applied current $\Divisor(f)\in \Div_\RR(\Gamma)$.
In other words, the voltage $f$ results from sending current from $\poles(f)$ to $\zeros(f)$
in $\Gamma$.

The existence of $\potent{z}{y} \in \PL_\RR(\Gamma)$ for any $y,z\in \Gamma$ implies that 
the principal divisor map
$ \Divisor : \PL_\RR(\Gamma) \to \Div_\RR^0(\Gamma)$
is surjective.
This verifies the claim made in Remark~\ref{rmk:real-div} concerning the exactness of the sequence
\[ 0 \to \RR \xrightarrow{\text{const}} \PL_\RR(\Gamma)\xrightarrow{\Divisor} \Div_\RR(\Gamma) \xrightarrow{\text{deg}} \RR \to 0 .\]
\end{rmk}

\begin{prop}[Slope-current principle]
\label{prop:slope-current}
Suppose $f \in \PL_\RR(\Gamma)$ 
has zeros $\zeros(f)$ and poles $\poles(f)$ of degree $d\in \RR$.
Then the slope of $f$ is bounded by $d$, i.e.
\[ |f'(x)| \leq d \qquad \text{for any $x\in\Gamma$ where $f$ is linear}.\]
(This bound 
is attained only on bridge edges,
and only when all zeros are on one side of the bridge
and all poles are on the other side.)
\end{prop}
\begin{proof}
Let $\lambda = f(x) $.
Then the ``tropical preimage''
\[ \divfiber{f}(\lambda) := \poles(f) + \Divisor(\max\{f, \lambda\})\]
has multiplicity $|f'(x)|$ at $x$, since the outgoing slopes of $\max\{f,\lambda\}$ 
at $x$ are $|f'(x)|$ and $0$.
(Note $x$  cannot be in $\poles(f)$ since $f$ is linear at $x$.)
Since the divisor $\divfiber{f}(\lambda)$ is effective of degree $d$, this implies 
$|f'(x)| \leq d$ as desired.
\end{proof}
\begin{rmk}
The above proposition is obvious from its ``physical interpretation'': 
$f$ gives the voltage in the resistor network $\Gamma$ when subjected to an external current
described by
$\poles(f)$ units flowing into the network
and $\zeros(f)$ units flowing out.
The slope $|f'(x)|$ is equal to the current flowing through the wire containing $x$, 
which must be no more than the total in-flowing (or out-flowing) current.
\end{rmk}

Next we address how the voltage function $\potent{z}{y} \in \PL_\RR(\Gamma)$
may be approximated by a sequence of functions in $\PL_\ZZ(\Gamma)$ (up to rescaling),
which depend on reduced divisors.
(We only use property (RD3) of reduced divisors in the following arguments.)

\begin{prop}[Quantitative version of unit potential approximation]
\label{prop:quant-voltage-approx}
Let $\Gamma$ be a metric graph of genus $g$,
and let $D_{\en}$ be a degree $\en$ divisor on $\Gamma$.
Fix two points $y,z \in\Gamma$, and let $f_{\en}$ be the unique function in $\PL_\ZZ(\Gamma)$ satisfying
\[ \Divisor(f_{\en}) = \reddiv{z}{D_{\en}} - \reddiv{y}{D_{\en}} 
\qquad\text{and}\qquad
 f_{\en}(z) = 0.\]
If $n > g$ then for any $x\in \Gamma$,
we have
$|(\frac1{\en - g} f_{\en} - \potent{z}{y})'(x)| \leq \frac{g}{\en - g}$.
\end{prop}
\begin{proof}
Let $\phi_{\en} = \frac1{\en-g} f_{\en} - \potent{z}{y}$.
%
%
To show that $|\phi_{\en}'(x)| \leq \frac{g}{\en - g}$ for any $x\in \Gamma$, we apply the slope-current principle, Proposition~\ref{prop:slope-current}.
By Riemann's inequality, the reduced divisors $\reddiv{y}{D_{\en}}$, $\reddiv{z}{D_{\en}}$ may be expressed as 
\[ \reddiv{y}{D_{\en}} = (\en - g) y + E_{y}
\qquad\text{and}\qquad
\reddiv{z}{D_{\en}} = (\en - g) z + E_{z}\]
for some  effective divisors $E_{y}$, $E_z$ of degree $g$.
Thus the principal divisor associated to $\frac1{\en-g} f_{\en}$ is
\[ 
\Divisor\left(\frac1{\en-g} f_{\en}\right) 
= z + \frac1{\en-g}E_{z} - y - \frac1{\en-g} E_{y} .
\]
Recall that $\Divisor(\potent{z}{y}) = z - y$;
it follows that the principal $\RR$-divisor associated to $\phi_{\en}$ is
\begin{equation*}
\Divisor(\phi_{\en}) = \Divisor\left(\frac1{\en-g} f_{\en} - \potent{z}{y} \right) 
= \frac1{\en-g} E_{z} - \frac1{\en-g} E_{y} .
\end{equation*}
In particular, $\Divisor(\phi_n)$ is a difference of effective $\RR$-divisors of degree $\frac{g}{\en-g}$, 
so the zeros $\zeros(\phi_{\en})$ and poles $\poles(\phi_{\en})$ each have degree at most $\frac{g}{\en-g}$.
By Proposition \ref{prop:slope-current}, this implies $|\phi_{\en}'(x)| \leq \frac{g}{\en - g}$
as claimed.
\end{proof}

\begin{prop}[Discrete approximation of unit potential function]
\label{prop:voltage-approx}
Let $\{D_{\en} : { \en\geq 1} \}$ be a sequence of divisors on $\Gamma$ with $\deg D_{\en} = \en$.
Fix two points $y, z \in \Gamma$.
Let $\reddiv{y}{D_{\en}}$ and $\reddiv{z}{D_{\en}}$ denote the $y$-- and $z$--reduced representatives 
in the divisor class $[D_{\en}]$,
and let $f_{\en}$ be the unique function in $\PL_\ZZ(\Gamma)$ satisfying
\[ \Divisor(f_{\en}) = \reddiv{z}{D_{\en}} - \reddiv{y}{D_{\en}} 
\qquad\text{and}\qquad 
f_{\en}(z) = 0.\]
Then the functions $\frac1{\en} f_{\en} $ converge uniformly to $ \potent{z}{y}$ as $\en \to \infty$.
\end{prop}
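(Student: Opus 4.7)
The plan is to analyze the error function $g_N := f_N - N \potent{z}{y} \in \PL_\RR(\Gamma)$ and show it is uniformly bounded in absolute value by a constant independent of $N$, from which dividing by $N$ yields the uniform convergence (in fact with rate $O(1/N)$).

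First, I would use property (RD2) to put each reduced divisor in a normal form. For $N \geq g$, Riemann's inequality guarantees $r(D_N) \geq 0$, so by (RD1) both reduced representatives are effective and we may write
\[ \reddiv{z}{D_N} = a_z\, z + E_z, \qquad \reddiv{y}{D_N} = a_y\, y + E_y, \]
where $E_z, E_y$ are effective, supported away from $z$ and $y$ respectively, with $\deg E_z, \deg E_y \leq g$, and with $a_z = N - \deg E_z$, $a_y = N - \deg E_y$. (For the finitely many small values of $N$ the uniform bound below is automatic after adjusting the constant.)

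Second, a direct computation in $\Div_\RR(\Gamma)$ gives
\[ \Divisor(g_N) = \bigl(\reddiv{z}{D_N} - Nz\bigr) - \bigl(\reddiv{y}{D_N} - Ny\bigr) = \bigl(E_z - (\deg E_z)\, z\bigr) + \bigl((\deg E_y)\, y - E_y\bigr), \]
a degree-zero divisor whose positive part has degree at most $\deg E_z + \deg E_y \leq 2g$, independently of $N$.

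Third, I would apply the slope--current principle (Proposition \ref{prop:slope-current}) to $g_N \in \PL_\RR(\Gamma)$: since the ``current input'' has total mass at most $2g$, every slope of $g_N$ is bounded by $2g$ in absolute value. Combined with the normalization $g_N(z) = f_N(z) - N \potent{z}{y}(z) = 0$ and the compactness of $\Gamma$, integrating along a shortest path from $z$ to an arbitrary $x \in \Gamma$ yields
\[ |g_N(x)| \leq 2g \cdot \mathrm{diam}(\Gamma) \]
for every $x \in \Gamma$. Dividing by $N$ then gives $\left\| \tfrac{1}{N} f_N - \potent{z}{y} \right\|_\infty \leq 2g\,\mathrm{diam}(\Gamma)/N$, which implies uniform convergence as $N \to \infty$.

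I do not anticipate a serious obstacle: the whole argument is a ``small perturbation of boundary data yields a small perturbation of voltage'' statement, made quantitative by the slope--current bound. The only minor points requiring care are (i) that property (RD2) is used in its strong form to cap $\deg E_z, \deg E_y$ by $g$ uniformly in $N$, and (ii) that the slope--current principle as stated in Proposition \ref{prop:slope-current} genuinely applies to the real-slope function $g_N$, not only to $\PL_\ZZ$ functions, which is precisely why the proposition was formulated in $\PL_\RR(\Gamma)$.
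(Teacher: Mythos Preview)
Your proposal is correct and follows essentially the same route as the paper's proof: both compute the Laplacian of the error function, bound its slopes by $2g$ (resp.\ $2g/N$) via the slope--current principle (Proposition~\ref{prop:slope-current}), and then integrate from $z$ using compactness of $\Gamma$. The only cosmetic difference is that you work with the unnormalized error $g_N = f_N - N\,\potent{z}{y}$ and divide by $N$ at the end, whereas the paper works directly with $\phi_N = \tfrac{1}{N} f_N - \potent{z}{y}$; the resulting bound $\|\tfrac{1}{N}f_N - \potent{z}{y}\|_\infty \leq 2g\,\mathrm{diam}(\Gamma)/N$ is the same.
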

\begin{proof}
If the sequence $\frac1{\en} h_{\en}$ converges to a limit,
then the sequence $\frac1{\en +c} h_{\en}$  must also converge to the same limit as $n\to\infty$,
for any constant $c$.
Thus it suffices to show that the functions $\frac1{\en-g} f_{\en}$
converge uniformly to $\potent{z}{y}$.

Let $\phi_{\en} = \frac1{\en-g} f_{\en} - \potent{z}{y}$ for $n > g$.
We claim that the sequence of functions 
$\{ \phi_{\en} \in \PL_\RR(\Gamma): \en \geq g+1\}$
converges uniformly to $0$. 
Note that each  $\phi_{\en}$ is a  continuous, piecewise-differentiable function with $\phi_{\en}(z) = 0$,
so for an arbitrary $x\in \Gamma$ we may calculate the value of $\phi_{\en}(x)$ by integrating
the derivative of $\phi_{\en}$ along some path in $\Gamma$ from $z$ to $x$.
The  length of such a path is bounded (independent of $x$) since $\Gamma$ is compact, 
so to show that $\phi_{\en} \to 0$ uniformly 
it suffices to show that 
the magnitude of the derivative $|\phi_{\en}'|$ approaches $0$ uniformly as $\en\to\infty$.
This convergence is provided by Proposition \ref{prop:quant-voltage-approx}, which states that $|\phi_{\en}'(x)| \leq \frac{g}{\en-g}$.
\end{proof}

\begin{rmk}
We can interpret Proposition \ref{prop:voltage-approx} as follows:
the existence of the unit potential function $\potent{z}{y}: \Gamma \to \RR$ follows from 
(tropical) Riemann's inequality for divisors on $\Gamma$.
\end{rmk}

\subsection{Resistance function}
\label{subsec:resistance}
In this section we recall the definition of the 
(Arakelov--)Zhang canonical measure $\mu$
on a metric graph.

\begin{dfn}
Let $r : \Gamma \times \Gamma \to \RR$ denote the 
{\em effective resistance} function on the metric graph $\Gamma$. 
Namely, viewing $\Gamma$ as a resistor network,
\begin{align*}
r(x,y) &= \text{effective resistance between $x$ and $y$} \\
&= \text{total voltage drop when sending 1 unit of current from $x$ to $y$}.
\end{align*}
If we wish to emphasize the underlying graph, we write $r(x,y; \Gamma)$ for $r(x, y)$.
\end{dfn}
In terms of the unit potential function (see Section \ref{sec:voltage}), $r(x,y) = \potent{y}{x}(x)$.
The resistance function satisfies the following properties:
\begin{enumerate}
\item $r(x,x) = 0$;
\item 
$r(x,y) > 0$ if $x \neq y$;
\item $r(x,y) = r(y,x)$;
\item $r(x,y)$ is continuous with respect to $x$ and $y$.
\end{enumerate}

In contrast with the voltage function $\potent{z}{y}$, 
the function $x \mapsto r(x,y)$ is not piecewise linear. 
We will see that it is instead piecewise quadratic.

\begin{eg}
\label{eg:circle-resistance}
Let $\Gamma$ be a circle of circumference $L$. 
By choosing a basepoint which we denote as $0$, we may parametrize $\Gamma$ with the interval $[0,L]$.
Identifying points in this way, we have
\begin{align*}
r(x,0) &= \text{parallel combination of resistances $x$ and $L-x$} \\
&= \frac{x(L-x)}{x + (L-x)} = x - \frac1 L x^2.
\end{align*}
The effective resistance is maximized when $x = \frac12 L$, with maximum value $ \frac14 L$.
The effective resistance is minimized when $x= 0$ or $x = L$, with effective resistance $0$.
\end{eg}

There is a special case of effective resistance which will be particularly useful in the following sections.

\begin{dfn}
Given  a segment $e$ in a metric graph $\Gamma$, 
the {\em deleted effective resistance} $\lengtheff{\Gamma \backslash e}$
is the effective resistance between endpoints of $e$ 
in the $e$-deleted subgraph; that is, if $s,t$ are the endpoints of $e$ then
\begin{equation*}
  \lengtheff{\Gamma \backslash e} = r(s,t;\Gamma \backslash e). 
\end{equation*}
\end{dfn}
Note that $\lengtheff{\Gamma\backslash e} = 0$ when $e$ is a loop,
and $\lengtheff{\Gamma \backslash e} = +\infty$ when $e$ is a bridge.
The rule for combining resistances in parallel implies that
for a segment $e$ with endpoints $\startv$ and $\tailv$,
\begin{equation*}
r(s,t; \Gamma)  
= \left( \frac{1}{\length{e}} + \frac{1}{\lengtheff{\Gamma\backslash e}} \right)^{-1}
= \frac{\length{e} \lengtheff{\Gamma\backslash e}}{\length{e} + \lengtheff{\Gamma\backslash e}} .
\end{equation*}

\subsection{Canonical measure}

\begin{dfn}
The {\em Zhang canonical measure} $\mu = \mu_\Gamma$ on a metric graph $\Gamma$ is 
the continuous measure defined by
\[ 
\mu 
= -\frac12 \frac{d^2}{dx^2} r(x,y_0) \,dx.\]
where $x$ is a length-preserving parameter on a segment,
$dx$ is the Lebesgue measure,
and $y_0 \in \Gamma$ is fixed.
This defines $\mu$ on the open dense subset of $\Gamma$ where the second derivative exists;
at the finite set of points where $r(-,y_0)$ is not differentiable, 
or where the valence of $x$ differs from 2, 
we let $\mu = 0$.
\end{dfn}
For brevity, we will refer to the Zhang canonical measure simply as the canonical measure.

\begin{rmk}
The first derivative of a smooth function on $\Gamma$ is only well-defined up to a choice of sign, 
since there are two directions in which we could parametrize any segment.
The second derivative, however, is well-defined on each segment 
(without choosing an orientation). 
Either choice of direction yields the same second derivative
since $(\pm 1)^2 = 1$.
\end{rmk}

\begin{rmk}
The definition of the canonical measure is independent of the choice of basepoint $y_0$ 
because of the ``Magical Identity'' in Proposition \ref{prop:voltage-sym} (b).
Namely, for two basepoints $y_0, z_0$ we have
$\potent{y_0}{x}(x) - \potent{y_0}{x}(z_0) = \potent{z_0}{x}(x) - \potent{z_0}{x}(y_0)$ 
which implies
\begin{align*}
 r(x,y_0) - r(x,z_0) &= \potent{y_0}{x}{(x)} - \potent{z_0}{x}{(x)}  \\
 &= \potent{y_0}{x}(z_0) - \potent{z_0}{x}(y_0) = \potent{y_0}{z_0}(x) - \potent{z_0}{y_0}(x).
\end{align*}
Since the voltage functions $\potent{y_0}{z_0}, \potent{z_0}{y_0}$ are piecewise linear,
we have 
\[ \frac{d^2}{dx^2}(r(x,y_0) - r(x,z_0)) 
 = \frac{d^2}{dx^2}(\potent{y_0}{z_0}(x) - \potent{z_0}{y_0}(x)) = 0.\]
\end{rmk}

\begin{rmk}\label{rmk:canonical-differ}
The definition of canonical measure given here differs from those used by previous authors in the following ways.
	
	\begin{enumerate}[(a)]
	\item 
	The canonical measure given here 
	differs by a multiplicative factor from Zhang's canonical measure 
\cite[Section 3, Theorem 3.2 c.f. Lemma 3.7]{Z} associated to the canonical divisor $D = K$.
Our canonical measure is normalized to satisfy $\mu(\Gamma) = g$ rather than $\mu(\Gamma) = 1$.

\item 
	The canonical measure given here differs from that used by Baker--Faber \cite{BF}, 
	in that our $\mu$ does not have a discrete part supported at the points of $\Gamma$ with valence different from $2$.
The canonical measure of Baker--Faber is equal to Zhang's canonical measure associated to $D = 0$.
\end{enumerate}
\end{rmk}

\begin{eg}[Canonical measure on circle]
\label{eg:circle-canonical}
If $\Gamma$ is a circle of circumference $L$, 
then by Example \ref{eg:circle-resistance} we have ${ r(x,0) = x - \frac1L x^2 }$ 
so the canonical measure is ${ \mu = \frac1 L dx }$.
The total measure on the metric graph is $\mu(\Gamma) = 1$.
\end{eg}

\begin{eg}[Canonical measure on theta graph]
\label{eg:genus2-canonical}
Consider the metric graph $\Gamma$ of genus 2 shown in Figure \ref{fig:genus2len}, with edge lengths $a,b,c$.

\begin{figure}[h]
\begin{minipage}{0.45\textwidth}
\centering
\begin{tikzpicture}[scale=1.0]
\coordinate (A) at (0,0);
\coordinate (B) at (0,2);
\coordinate (C) at (1.2,0);
\coordinate (D) at (1.2,2);
\coordinate (E) at (3,0);
\coordinate (F) at (3,2);

\draw[rounded corners=10] (C) -- (A) -- node[left] {$a$} (B) -- (D);
\draw[dotted] (C) -- node[right] {$b$} (D);
\draw[dotted,rounded corners=10] (C) -- (E) -- node[right] {$c$} (F) -- (D);

\foreach \c in {C,D} {
	\fill (\c) circle (2.5pt);
}
\end{tikzpicture}
\\
$\length{e} = a$
\end{minipage}
\begin{minipage}{0.45\textwidth}
\centering
\begin{tikzpicture}[scale=1.0]
\coordinate (A) at (0,0);
\coordinate (B) at (0,2);
\coordinate (C) at (1.2,0);
\coordinate (D) at (1.2,2);
\coordinate (E) at (3,0);
\coordinate (F) at (3,2);

\draw[dotted,rounded corners=10] (C) -- (A) -- node[left] {$a$} (B) -- (D);
\draw (C) -- node[right] {$b$} (D);
\draw[rounded corners=10] (C) -- (E) -- node[right] {$c$} (F) -- (D);

\foreach \c in {C,D} {
	\fill (\c) circle (2.5pt);
}
\end{tikzpicture}
\\
$\lengtheff{\Gamma\backslash e} = \frac{bc}{b+c}$ 
\end{minipage}
\caption{Edge length and deleted effective resistance on a metric graph.}
\label{fig:genus2len}
\end{figure}
Let $e$ be the edge of length $a$;
on $e$ we have $\length{e} = a$ and $\lengtheff{\Gamma \backslash e} = \frac{bc}{b+c}$.
When measuring effective resistance between points in the interior of $e$, 
we can  think of $\Gamma$ as a circle of total length 
$\length{e} + \lengtheff{\Gamma \backslash e} = \frac{ab + ac + bc}{b + c}$.
Thus the canonical measure on this edge is $\mu = \frac{b+c}{ab+ac+bc}dx$,
by the computation for a circle in Example \ref{eg:circle-resistance}.
The total measure on this edge is $\mu(e) = \frac{ab+ac}{ab+ac+bc}$, 
and by symmetry the total measure on the metric graph is $\mu(\Gamma) = 2$.
\end{eg}

\begin{prop}[{Baker--Faber \cite[Theorem 12]{BF}}]
\label{prop:canonical-formula}
The  canonical measure $\mu$ on a metric graph $\Gamma$
is a piecewise-constant multiple of the Lebesgue measure,
which vanishes on all bridge segments.
On a non-bridge segment $e$ in $\Gamma$, 
\[ \mu|_e = \frac{1}{\length{e} + \lengtheff{\Gamma\backslash e}} dx\]
where $\length{e}$ denotes the length of $e$ and  $\lengtheff{\Gamma\backslash e}$ denotes the effective resistance between the endpoints of $e$ on the graph after removing the interior of $e$.
\end{prop}
The proof idea should be clear from Example \ref{eg:genus2-canonical}.
If a segment $e$ is subdivided into $e_1 \sqcup e_2$,
the expression $\mu|_e$ agrees with $\mu|_{e_1} = \mu|_{e_2}$.
Note that our $\mu$ is defined to be the continuous part of Baker--Faber's $\mu_{\text{can}}$ (see Remark~\ref{rmk:canonical-differ}).

\begin{cor}
\label{cor:canonical-bound}
Let $\Gamma$ be a metric graph with canonical measure $\mu$,
and let $e$ be a segment in $\Gamma$. 
Then
\begin{enumerate}[(a)]
\item 
$ 0\leq \mu(e) \leq 1$;
\item 
$\mu(e) = 0$ if and only if $e$ is a bridge edge;
\item 
$\mu(e) = 1$ if and only if $e$ is a loop edge.
\end{enumerate}
\end{cor}
\begin{proof}
By Proposition \ref{prop:canonical-formula},
$\mu(e) = 0$ for bridges
and 
$\mu(e) = 
\frac{\length{e}}{\length{e} + \lengtheff{\Gamma\backslash e}} $
otherwise.
\end{proof}

The following corollary to Proposition~\ref{prop:canonical-formula} gives an explicit relation between the canonical measure and effective resistance.
\begin{cor}
\label{cor:canonical-via-resistance}
If $e$ is a segment of $\Gamma$ with endpoints $s$ and $t$,
and $e$ is not a bridge or loop, then
$\displaystyle
	\mu(e) = \frac{r(s, t; \Gamma)}{r(s, t; \Gamma \setminus e)}.
$
\end{cor}

\begin{prop}[Foster's Theorem]
Let $\Gamma$ be a metric graph of genus $g$, and let $\mu$ be the canonical measure on $\Gamma$.
Then the total measure on $\Gamma$ is
$\mu(\Gamma) = g$.
\end{prop}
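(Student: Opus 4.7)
The plan is to reduce the statement to the classical form of Foster's network theorem via Proposition \ref{prop:canonical-formula}. Choose any combinatorial model $G=(V,E)$ of $\Gamma$. Proposition \ref{prop:canonical-formula} gives
\[ \mu(\Gamma) = \sum_{e\in E} \int_e \frac{dx}{R_e + R_{\Gamma\setminus e}} = \sum_{e\in E} \frac{R_e}{R_e + R_{\Gamma\setminus e}}, \]
so the problem is purely about resistances of the underlying network.

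Next I would rewrite each summand in terms of the effective resistance $R_e^{\mathrm{eff}} := r(e_+,e_-)$ between the endpoints of $e$ in the whole graph $\Gamma$ (edge $e$ included). Since an edge $e$ and the rest of the network $\Gamma\setminus e$ are in parallel between those two endpoints, the parallel-resistance rule gives
\[ R_e^{\mathrm{eff}} = \frac{R_e\,R_{\Gamma\setminus e}}{R_e + R_{\Gamma\setminus e}}, \qquad\text{so}\qquad \frac{R_e}{R_e + R_{\Gamma\setminus e}} = 1 - \frac{R_e^{\mathrm{eff}}}{R_e}. \]
Substituting into the formula for $\mu(\Gamma)$ yields
\[ \mu(\Gamma) = |E| - \sum_{e\in E} \frac{R_e^{\mathrm{eff}}}{R_e}. \]

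The heart of the proof is then the classical Foster network identity $\sum_{e\in E} R_e^{\mathrm{eff}}/R_e = |V| - 1$. I would either cite this or prove it in a short lemma; the cleanest derivation I know runs as follows. Inject one unit of current into vertex $e_+$ and remove it at $e_-$; the resulting current through edge $e'$ is $I_{e'}$, satisfying $\sum_{e'} R_{e'} I_{e'}^2 = R_e^{\mathrm{eff}}$ by Thomson's principle / Ohm's law, and in particular the self-contribution is $R_e^{\mathrm{eff}}/R_e$. Summing over $e$ and reinterpreting the double sum as the trace of the projection onto the cycle-orthogonal (cut) space of $\Gamma$ gives $|V|-1$, which is the dimension of the cut space. (Alternatively, one may invoke Kirchhoff's matrix-tree theorem: $R_e^{\mathrm{eff}}/R_e$ equals the fraction of spanning trees of $\Gamma$ containing $e$, and summing this count over all edges gives $|V|-1$ times the number of spanning trees.)

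Combining the two ingredients,
\[ \mu(\Gamma) = |E| - (|V| - 1) = |E| - |V| + 1 = g, \]
by the combinatorial formula for the genus. The main obstacle is just justifying the classical Foster identity cleanly; everything else is algebraic manipulation of Proposition \ref{prop:canonical-formula}.
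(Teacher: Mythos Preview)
Your reduction is correct and is precisely the content of the references the paper defers to: the paper's own ``proof'' is just a citation to Baker--Faber \cite{BF} and Foster \cite{F}, so you are supplying the argument those sources contain rather than diverging from it. One minor wording issue in your trace sketch: the quantity you want on the diagonal is the current $I_e$ through $e$ itself, which equals $R_e^{\mathrm{eff}}/R_e$ because the voltage drop across $e$ is $R_e^{\mathrm{eff}}$; this is \emph{not} the $e$-th term $R_e I_e^2$ of the power sum $\sum_{e'} R_{e'} I_{e'}^2 = R_e^{\mathrm{eff}}$, so the sentence linking those two should be rephrased. With that fix the transfer-current matrix $Y=(I_{e'}^{(e)})$ is idempotent of rank $|V|-1$, giving $\sum_e R_e^{\mathrm{eff}}/R_e = \operatorname{tr} Y = |V|-1$; your matrix-tree alternative (edge-inclusion probabilities summing to $|V|-1$) is cleaner still and avoids the ambiguity entirely.
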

\begin{proof}
See Baker--Faber \cite[Corollary 5 and  Corollary 6]{BF} and Foster \cite{F}.
\end{proof}

\subsection{Energy and reduced divisors}
\label{sec:red-div}
Here we give a  definition of $q$-reduced divisors on a metric graph.
We will only need to use $q$-reduced divisors for effective divisor classes,
so we restrict our discussion here to the effective case.

\begin{dfn}
Given a basepoint $q$ on $\Gamma$,
we define the {\em $q$-energy} $\mathcal E_q : \Gamma \to \RR$
by 
\[ \mathcal E_q(y) = \potent{q}{y}(y) = r(y,q). \]
Given an effective divisor $D = \sum_i y_i$, 
we define the {\em $q$-energy} $\mathcal E_q(D)$ by
\[ \mathcal E_q(D) = \sum_i \sum_j \potent{q}{y_i}(y_j)  .\]
\end{dfn}
Note that 
\begin{itemize}
\item $\mathcal E_q(D)\geq 0$, 
\item $\mathcal E_q(D)$ 
is strictly positive if $D$ has support outside of $q$,
\item  $\mathcal E_q(D) \geq \sum_{i} \mathcal E_q(y_i)$, and in general this inequality is strict.
\end{itemize}

\begin{thm}[Baker--Shokrieh {\cite[Theorem A.7]{BS}}] 
Fix a basepoint $q\in \Gamma$, and let $D$ be an effective divisor on $\Gamma$.
There is a unique divisor $D_0 \in |D|$ which minimizes the $q$-energy, i.e. 
such that
\[ \mathcal E_q(D_0) < \mathcal E_q(E) \qquad\text{for all}\qquad E\in |D|,\, E\neq D_0.\]
\end{thm}

\begin{dfn}
The {\em $q$-reduced divisor} $\reddiv{q}{D}$ is the unique divisor in $|D|$ which
minimizes the $q$-energy $\mathcal E_q$.
\end{dfn}
Note that this definition is non-standard;
the standard definition for reduced divisor is a combinatorial condition
which can be phrased in the language of chip-firing, 
see \cite[p. 4854]{A-reddiv}, \cite[Definition 2.3]{ABKS}.

\begin{eg}
In Figure \ref{fig:red}
we show a degree $4$ divisor, on the left,
and its reduced representative with respect to basepoint $q$,
on the right.
\begin{figure}[h]
\begin{minipage}{0.45\textwidth}
\centering
\begin{tikzpicture}
	\coordinate (A) at (0,-0.4);
	\coordinate (B) at (0,0);
	\coordinate (C) at (1,0);
	\coordinate (D) at (1,0.4);
	\coordinate (E) at (2,0);
	\coordinate (F) at (2,0.4);
	\coordinate (G) at (3,-0.4);
	\coordinate (H) at (3,0);

	\draw[rounded corners=5] (B) -- (A) -- +(0,-0.3) -- +(-1,-0.3) -- +(-1,0.7) -- +(0,0.7) -- cycle;
	\draw (B) -- (C);
	\draw[rounded corners=5] (D) -- +(0,0.3) -- +(1,0.3) -- (F) -- (E) -- +(0,-0.3) -- +(-1,-0.3) -- (C) -- cycle;
	\draw (E) -- (H);
	\draw[rounded corners=5] (G) -- (H) -- +(0,0.3) -- +(1,0.3) -- +(1,-0.7) -- +(0,-0.7) -- cycle;

	\foreach \c in {B,C,E,H} {
		\fill (\c) circle (2.5pt);
	}
	\draw[color=red] (3.5,0.3) node[above=0.2] {$q$} circle (3pt);
\end{tikzpicture}
\\
$D$
\end{minipage}
\begin{minipage}{0.45\textwidth}
\centering
\begin{tikzpicture}
	\coordinate (A) at (0,-0.4);
	\coordinate (B) at (0,0);
	\coordinate (C) at (1,0);
	\coordinate (D) at (1,0.4);
	\coordinate (E) at (2,0);
	\coordinate (F) at (2,0.4);
	\coordinate (G) at (3,-0.4);
	\coordinate (H) at (3,0);

	\draw[rounded corners=5] (B) -- (A) -- +(0,-0.3) -- +(-1,-0.3) -- +(-1,0.7) -- +(0,0.7) -- cycle;
	\draw (B) -- (C);
	\draw[rounded corners=5] (D) -- +(0,0.3) -- +(1,0.3) -- (F) -- (E) -- +(0,-0.3) -- +(-1,-0.3) -- (C) -- cycle;
	\draw (E) -- (H);
	\draw[rounded corners=5] (G) -- (H) -- +(0,0.3) -- +(1,0.3) -- +(1,-0.7) -- +(0,-0.7) -- cycle;

	\foreach \pos in {(1.5,0.7),(3.5,0.3),(3.5,-0.7)} {
		\fill \pos circle (2.5pt);
	}
	\draw[color=red] (3.5,0.3) node[above=0.2] {$q$} circle (3pt);
	\node[below left] at (3.5,0.3) {$2$};
\end{tikzpicture}
\\
$\reddiv{q}{D}$
\end{minipage}
\caption{A divisor (left) and its reduced divisor representative (right).}
\label{fig:red}
\end{figure}
\end{eg}

For a fixed effective divisor $D$,
we may consider the family of reduced divisors $\reddiv{x}{D}$ as $x$ varies over $\Gamma$.
This association defines a map
\begin{align*}
\red_D : \Gamma &\to \Sym^d(\Gamma) \\
x &\mapsto  \reddiv{x}{D}
\end{align*}
whose image lies in the linear system $|D|$.
Amini proved that this map is continuous \cite[Theorem 3]{A-reddiv}.

\begin{thm}
\label{thm:piecewise-geodesic}
The reduced-divisor map is piecewise geodesic.
\end{thm}
\begin{proof}
We use the characterization of reduced divisors by Dhar's burning algorithm.
Dhar's burning algorithm says that a divisor $D$ is $x$-reduced if and only if a fire started at $x$ will spread to all of $\Gamma$.
If $D$ is not $x$-reduced, 
then a fire started at $x$ will spread to a open 
connected subset $U \subset \Gamma$,
and we obtain a divisor $D' \sim D$ which is closer to being $x$-reduced by
firing chips on the boundary $\partial U$ towards $U$.

Now suppose $D$ is $q$-reduced, and consider the family of divisors $D' = \reddiv{x}{D}$
where $x$ varies over a small segment $x \in [q, q + \epsilon]$  starting at $q$.
For a sufficiently small segment $[q, q + \epsilon]$,
we claim that the map $x \mapsto \reddiv{x}{D}$ is geodesic for
$x \in [q,q+\epsilon]$.
Consider starting a fire on $\Gamma$ at the point $q + \epsilon$,
with ``firefighter'' chips placed according to $\reddiv{q}{D}$.
Let $U$ denote the subset of $\Gamma$ that is on fire.
We consider three cases:

\underline{Case 1}: The set $U$ is not dense in $\Gamma$.

In this case, the boundary of $U$ forms a simple cut of $\Gamma$, and the map $x\mapsto \reddiv{x}{D}$ moves chips along this cut towards $U$.
Each segment has a chip with multiplicity one, except for the segment $[q,q+\epsilon]$ which may have higher multiplicity.

%
%

\underline{Case 2}: The set $U$ is dense in $\Gamma$, and the degree of $D$ at $q$ is not less than the out-valence.

In this case, the the map $x \mapsto \reddiv{x}{D}$ moves one chip away from $q$ in each direction that is not $[q,q+\epsilon]$,
and the remaining chips from $q$ are moved in the direction of $[q,q+\epsilon]$.

\underline{Case 3}: The set $U$ is dense in $\Gamma$, and the degree of $D$ at $q$ is less than out-valence.

In this case, the map $x\mapsto \reddiv{x}{D}$ is constant for $x$ on the segment $[q,q + \epsilon]$, for sufficiently small $\epsilon$.

In each of the three cases, it is straightforward to verify that the family of divisors forms a geodesic path.
\end{proof}

\section{Weierstrass points}
\label{sec:weierstrass}
In this section we define the Weierstrass locus 
and the stable Weierstrass locus of an
arbitrary divisor $D$ on a metric graph $\Gamma$. 
We first review the notion of Weierstrass point on an algebraic curve.

\subsection{Classical Weierstrass points}
Recall that for an algebraic curve $X$ of genus $g$, the {\em ordinary Weierstrass points} are defined as follows.
The canonical divisor $K$ on $X$ determines a canonical map to projective space 
$\varphi_K : X \to \PP^{g - 1}$.
Generically a point on $\varphi_K(X)$ will have an osculating hyperplane in $\PP^{g - 1}$, 
which intersects $\varphi_K(X)$ with multiplicity $g - 1$.
For finitely many ``exceptional'' points on $\varphi_K(X)$, 
the osculating hyperplane will intersect the curve with higher multiplicity; 
the preimages of these exceptional points are the {ordinary Weierstrass points} of $X$.
(These  are also known as the {\em flex points} of the embedded curve $\varphi_K(X) \subset \PP^{g-1}$.)

This notion may be generalized by replacing $K$ 
with an arbitrary (basepoint-free) divisor.
Given a  divisor $D$ on $X$, 
there is an associated map to projective space 
$\varphi_D : X \to \PP^r$,
known as the complete linear embedding defined by $D$.
The set of flex points of the embedded curve $\varphi_D(X)$, 
where the osculating hyperplane intersects the curve with multiplicity greater than $r$, 
are the {\em (higher) Weierstrass points} associated to the divisor $D$.
If $D$ has degree $\en \geq 2g - 1$,
the number of Weierstrass points of $D$ is $g(n - g + 1)^2$ when counted with appropriate multiplicities.

The existence of an osculating hyperplane of multiplicity greater than $r$, at the point $\varphi_D (x)\in \varphi_D (X)$, is equivalent to the existence of a non-zero global section of the line bundle  $\mathcal L(D - (r + 1)x)$,
i.e. to having $h^0(D - (r + 1)x) \geq 1$.

\subsection{Tropical Weierstrass points}
On a metric graph, the Baker--Norine rank function $r(D)$ is the natural analogue of $h^0(D) - 1$ on an algebraic curve.
It would make sense to say, following \cite[Lemma 4.2]{Bak} and the previous discussion, that $x \in \Gamma$ is a Weierstrass point for $D$ if
\[
	r(D - (r + 1)x) \geq 0,
\]
where the second $r$ is the rank of $D$.
To avoid the notational confusion of two $r$'s, 
we give an equivalent definition of Weierstrass points on $\Gamma$
using the language of reduced divisors (see Section~\ref{sec:reduced-prop}).
\begin{dfn}
\label{dfn:wpoint}
Let $D$ be a divisor on a metric graph $\Gamma$, with rank $r = r(D)$.
A point $x\in \Gamma$ is a {\em Weierstrass point for $D$} 
if 
\[ 
\reddiv{x}{D} \geq (r + 1)x,
\]
where $\reddiv{x}{D}$ denotes the reduced divisor in $[D]$ at $x$.
The {\em Weierstrass locus} $W(D) \subset \Gamma$ of $D$ is the set of its Weierstrass points.
An {\em ordinary Weierstrass point}  is a Weierstrass point for the canonical divisor $K$.
\end{dfn}
Note that the Weierstrass locus of $D$ depends only on the divisor class $[D]$.

\begin{rmk}
\label{rmk:w-empty}
If the divisor class $[D]$ is not effective, i.e. $r(D) = -1$, 
then the set of Weierstrass points of $D$ is empty.
Thus we may restrict our attention to studying  
Weierstrass points for effective divisor classes. 
\end{rmk}

\begin{eg}
Suppose $\Gamma$ is a genus $1$ graph and $D$ is a divisor of degree $6$.
This $D$ has rank $5$ since it is in the nonspecial range of Riemann--Roch.
The Weierstrass locus of $D$ consists of $6$ points evenly spaced around the unique cycle of $\Gamma$.
\end{eg}
\begin{eg}
Suppose $\Gamma$ is 
a complete graph on $4$ vertices, with distinct edge lengths.
This graph has genus $3$.
Consider the canonical divisor $K$ on $\Gamma$, which is supported on the four trivalent vertices.
The Weierstrass locus of $K$ consists of $8$ distinct points on $\Gamma$,
shown in red in Figure~\ref{fig:gen3-canonical}.
\begin{figure}[h]
\centering
\begin{tikzpicture}[scale=0.8]
\coordinate (A) at (0,0);
\coordinate (B) at (0,-1);
\coordinate (C) at (1.5,1);
\coordinate (D) at (-1.5,1);

\draw (A) -- (B);
\draw (D) -- (A) -- (C);
\draw[rounded corners=20] (B) -- (-1.2,-1.6) -- (-2.5,0) -- (D);
\draw[rounded corners=20] (B) -- (1.2,-1.6) -- (2.5,0) -- (C);
\draw[rounded corners=25] (D) -- (-1.2,2) -- (1.2,2) -- (C);

\foreach \c in {A,B,C,D} {
	\fill (\c) circle (2pt);
}
\foreach \pos in {(0.3,0.2),(0.8,-1.3),(1.7,0.8),(1.2,1.5),(-0.3,0.2),(-0.8,-1.3),(-1.7,0.8),(-1.2,1.5)} {
	\fill[color=red] \pos circle (2pt);
	\draw[color=red] \pos circle (3pt);
}
\end{tikzpicture}
\caption{Metric graph with finite Weierstrass locus.}
\label{fig:gen3-canonical}
\end{figure}
\end{eg}

\begin{eg}[Wedge of circles]
Suppose $\Gamma$ is a wedge of $g$ circles,
and let $x_0$ denote the point of $\Gamma$ lying on all $g$ circles.
For a generic divisor class $[D_{\en}]$ of degree $\en$ 
(meaning generic inside of $\Pic^\en( \Gamma)$), 
the $x_0$-reduced representative of $[D_{\en}]$ consists of 
$\en-g$ chips at $x_0$ and one chip in the interior of each circle.
The Weierstrass locus $W(D_{\en})$
contains $\en-g+1$ evenly-spaced points on each circle of $\Gamma$, for a total of $g(\en-g+1)$ points.
\end{eg}

\begin{eg}[Failure of $W(D)$ to be finite]
\label{eg:3loop-chain}
Consider a graph consisting of a chain of three loops, shown in Figure \ref{fig:w-degenerate2} (left).
Suppose $D=K$ is the canonical divisor. 
By Riemann--Roch, $D$ has rank $r = 2$.
It is possible to move all 4 chips to lie on the middle loop, 
so any point in the middle loop 
has $\reddiv{x}{D} \geq 3x$.
The Weierstrass locus $W(D)$ contains the middle loop, but not the two outer loops.
\begin{figure}[h]
\begin{minipage}{0.45\textwidth}
\centering
\begin{tikzpicture}
	\coordinate (A) at (0,-0.4);
	\coordinate (B) at (0,0);
	\coordinate (C) at (1,0);
	\coordinate (D) at (1,0.4);
	\coordinate (E) at (2,0);
	\coordinate (F) at (2,0.4);
	\coordinate (G) at (3,-0.4);
	\coordinate (H) at (3,0);

	\draw[rounded corners=5] (B) -- (A) -- +(0,-0.3) -- +(-1,-0.3) -- +(-1,0.7) -- +(0,0.7) -- cycle;
	\draw[color=red,very thick] (B) -- (C);
	\draw[color=red,very thick,rounded corners=5] (D) -- +(0,0.3) -- +(1,0.3) -- (F) -- (E) -- +(0,-0.3) -- +(-1,-0.3) -- (C) -- cycle;
	\draw[color=red,very thick] (E) -- (H);
	\draw[rounded corners=5] (G) -- (H) -- +(0,0.3) -- +(1,0.3) -- +(1,-0.7) -- +(0,-0.7) -- cycle;

	\foreach \c in {B,C,E,H} {
		\fill[color=red] (\c) circle (2.0pt);
	}
	\foreach \pos in {(-1,0),(-0.5,-0.7),(3.5,-0.7),(4,0)} {
		\fill[color=red,opacity=0.5] \pos circle (1.6pt);
		\draw[color=red] \pos circle (2.4pt);
	}
\end{tikzpicture}
\end{minipage}
\begin{minipage}{0.05\textwidth}
	\qquad
\end{minipage}
\begin{minipage}{0.45\textwidth}
\centering
\begin{tikzpicture}
\coordinate (A) at (0,0);
\coordinate (B) at (1,0);
\coordinate (C) at (3,0);
\coordinate (D) at (0,2);
\coordinate (E) at (2,2);
\coordinate (F) at (3,2);

\draw (A) -- (B) -- (C);
\draw (D) -- (E) -- (F);
\draw (B) -- (E);
\draw (A) -- (D);
\draw (C) -- (F);
\draw[rounded corners=10] (A) -- +(-1,0) -- +(-1,2) -- (D);
\draw[rounded corners=10] (C) -- +(1,0) -- +(1,2) -- (F);

\foreach \c in {A,B,C,D,E,F} {
	\fill (\c) circle (2pt);
}

\draw[ultra thick,color=red] (1.4,0.8) -- (1.6,1.2);
	\foreach \pos in {
		(1.4,0.8),(1.6,1.2) 
	} {
	\fill[color=red] \pos circle (2.0pt);
	}
	\foreach \pos in {
		(-1,1),(0,1),(3,1),(4,1),(1,2),(2,0),
		(1.1,0.2),(1.9,1.8),(-0.2,2),(3.2,0),
		(-0.3,0),(0,0.2),(3.3,2),(3,1.8)
	} {
		\fill[color=red,opacity=0.5] \pos circle (1.6pt);
		\draw[color=red] \pos circle (2.4pt);
}
\end{tikzpicture}
\end{minipage}
\caption{Metric graphs with Weierstrass locus $W(D)$ not finite, where $D = K$.}
\label{fig:w-degenerate2}
\end{figure}
\end{eg}

\begin{rmk}
For any metric graph with a bridge edge,
it can be shown that 
the entire bridge edge is contained in the 
Weierstrass locus of the canonical divisor 
so in particular $W(K)$ is not finite.

The failure of $W(K)$ to be finite is not limited to graphs with bridge edges. 
The graph in Figure~\ref{fig:w-degenerate2} (right) is $2$-connected, but $W(K)$ contains an open segment in the middle of the central, diagonal edge. 
We omit the details.
\end{rmk}


%

\subsection{Stable tropical Weierstrass points}
\label{sec:wstab}
In this section 
we define the stable Weierstrass locus $\Wstab(D)$
of a divisor $D$ on a metric graph.
This definition is meant to fix undesirable behavior of the ``naive'' 
Weierstrass locus $W(D)$.
In particular, $\Wstab(D)$ is always a finite set.

For the definition of stable reduced divisor $\streddiv{x}{D}$, see Section~\ref{sec:break-div}.

\begin{dfn}
\label{dfn:wstab}
Let $D$ be a divisor of degree $n$ on a metric graph $\Gamma$ of genus $g$.
If $n\geq g$, the {\em stable Weierstrass locus} $\Wstab(D) \subset \Gamma$ is the set of all points $x\in \Gamma$ such that
\begin{equation}
\label{eq:stable-weierstrass}
\streddiv{x}{D} \geq (n-g + 1)x ,
\end{equation}
where $\streddiv{x}{D}$ is the stable reduced divisor of $[D]$ at $x$ (see Definition~\ref{dfn:stable-reduced-div}).
%
If $D$ has degree $\en < g$, we define $\Wstab(D)$ to be empty. 
\end{dfn}
In other words, 
$x$ is a {stable Weierstrass point} of $D$ if
\begin{equation*}
\text{there exists a break divisor }E \geq x \text{ such that}
\quad
E + (\en-g)x \in [D] . 
\end{equation*}
Note that if $D$ has degree $\en=g$, then $\Wstab(D)$ is exactly the support of $ \br[D]$.

Compare the above definition to that of the Weierstrass locus $W(D)$, Definition~\ref{dfn:wpoint}.
In the definition of $\Wstab(G)$, if $\en\geq g$ then $\en-g$ is the rank of a
generic divisor class in $\Pic^\en(\Gamma)$.
If a divisor class $[D]$ in $\Pic^\en(\Gamma)$ has rank $r(D) = \en-g$,
then $\Wstab(D) \subset W(D)$;
otherwise, this containment may fail to hold.
In particular, we have $\Wstab(D) \subset W(D)$ 
for all divisors of degree $\en\geq 2g-1$.

\begin{eg}[Divisor with $\Wstab(D) \not\subset W(D)$]
Consider the genus $3$ metric graph shown  in Figure~\ref{fig:w-stable}.
The canonical divisor $K$ is indicated in black.
This divisor has degree $n=4$ and rank $r(K) = 2$.
The divisor is special, because $r(K) > n - g = 1$.
On the left side, the Weierstrass locus is shown in red;
the right side shows the stable Weierstrass locus.
The stable Weierstrass locus consists of the midpoint of each edge.
The sets $W(K)$ and $\Wstab(K)$ are disjoint.

\begin{figure}[h]
\begin{minipage}{0.45\textwidth}
\centering
\begin{tikzpicture}
\coordinate (A) at (0,0);
\coordinate (B) at (0,-1);
\coordinate (C) at (1.5,1);
\coordinate (D) at (-1.5,1);

\draw (A) -- (B);
\draw (D) -- (A) -- (C);
\draw[rounded corners=25] (B) -- (-1.2,-1.6) -- (-2.5,0) -- (D);
\draw[rounded corners=25] (B) -- (1.2,-1.6) -- (2.5,0) -- (C);
\draw[rounded corners=30] (D) -- (-1.2,2) -- (1.2,2) -- (C);

\foreach \c in {A,B,C,D} {
	\fill (\c) circle (2pt);
}
\foreach \pos in {(0.3,0.2),(0.8,-1.3),(1.7,0.8),(1.2,1.5),(-0.3,0.2),(-0.8,-1.3),(-1.7,0.8),(-1.2,1.5)} {
	\fill[color=red] \pos circle (2pt);
	\draw[color=red] \pos circle (3pt);
}
\end{tikzpicture}
\\
$W(K)$
\end{minipage}
\begin{minipage}{0.45\textwidth}
\centering
\begin{tikzpicture}
\coordinate (A) at (0,0);
\coordinate (B) at (0,-1);
\coordinate (C) at (1.5,1);
\coordinate (D) at (-1.5,1);

\draw (A) -- (B);
\draw (D) -- (A) -- (C);
\draw[rounded corners=25] (B) -- (-1.2,-1.6) -- (-2.5,0) -- (D);
\draw[rounded corners=25] (B) -- (1.2,-1.6) -- (2.5,0) -- (C);
\draw[rounded corners=30] (D) -- (-1.2,2) -- (1.2,2) -- (C);

\foreach \c in {A,B,C,D} {
	\fill (\c) circle (2pt);
}
\foreach \pos in {(0.75,0.5),(1.8,-0.8),(0,2),(0,-0.5),(-0.75,0.5),(-1.8,-0.8)} {
	\fill[color=red] \pos circle (2pt);
	\draw[color=red] \pos circle (3pt);
}
\end{tikzpicture}
\\
$\Wstab(K)$
\end{minipage}

\caption{Divisor with Weierstrass locus, left, and stable Weierstrass locus, right.}
\label{fig:w-stable}
\end{figure}
\end{eg}

\section{Finiteness of Weierstrass locus}
\label{sec:finite}
In this section we show that the Weierstrass locus 
of a generic divisor class $[D]$ on a metric graph
is a finite set whose cardinality is $\#W(D) = g(n-g+1)$.
We do so by studying  the stable Weierstrass locus
$\Wstab(D)$, defined in Section~\ref{sec:wstab}.

\subsection{Setup}
\label{sec:wstab-setup}
Our main technical tool is
to consider the ABKS decomposition of $\Pic^g(\Gamma)$
(see Section~\ref{sec:break-div})
and the topology of certain branched covering spaces.

As the divisor class $[D]$ varies over $\Pic^\en(\Gamma)$,
we realize the stable Weierstrass loci $\Wstab(D)$
as the fibers of a surjective map $X \to \Pic^\en(\Gamma)$.
We are able to study the cardinality of $\Wstab(D)$
by  imposing a nice topology on $X$ and analyzing
topological properties of the map $X\to \Pic^\en(\Gamma)$.

Recall that $\Br^g(\Gamma)$ denotes the space of break divisors on $\Gamma$,
viewed as a subspace of $\Sym^g(\Gamma)$.
\begin{dfn}
Let $\widetilde{\Br}^g(\Gamma)$ denote the space
\begin{equation*}
\widetilde\Br^g(\Gamma) = \{ (x,E) \in \Gamma\times\Sym^{g-1}(\Gamma) : 
                            x + E \text{ is a break divisor} \} .
\end{equation*}
This defines a closed subset of the compact Hausdorff space $\Gamma\times \Sym^{g-1}(\Gamma)$,
so $\widetilde\Br^g(\Gamma)$ is compact and Hausdorff.
\end{dfn}
\begin{rmk}
We may think of $\widetilde\Br^g(\Gamma)$ as the space of ``pointed break divisors''
on $\Gamma$, i.e. 
$\widetilde \Br^g(\Gamma)$ is homeomorphic to
$\{ (x,D) \in \Gamma\times \Br^g(\Gamma)
\text{ such that }x\leq D \}$.
\end{rmk}
Let $\sigma : \widetilde\Br^g(\Gamma) \to \Br^g(\Gamma)$
denote the ``summation'' map $(x,E)\mapsto x+E$,
and let $\sigma_m : \widetilde \Br^g(\Gamma) \to \Pic^{m+g-1}(\Gamma)$
denote the ``summation with multiplicity'' map defined by 
\begin{equation*}
\sigma_m : (x,E) \mapsto [mx + E] .
\end{equation*}
Let $\pi_1 : \widetilde\Br^g(\Gamma) \to \Gamma$
 denote projection to the first factor, 
i.e. $\pi_1(x,E) = x$.
\begin{lem}
\label{lem:wstab}
Suppose $[D] \in \Pic^{m + g - 1}(\Gamma)$,
and let $\sigma_m$ and $\pi_1$ be defined as above.
\begin{enumerate}[(a)]
\item 
The stable Weierstrass locus $\Wstab(D)$ is equal to
$\pi_1(\sigma_m^{-1}[D])$.

\item The cardinality $\#\Wstab(D) = \# \sigma_m^{-1}[D]$.
\end{enumerate}
\end{lem}
\begin{proof}
(a) This follows from the definition of the stable Weierstrass locus.

(b) The claim is that $\pi_1$ is injective on the preimage $\sigma_m^{-1} [D] $.
To see this, consider two points $(x, E)$ and $(x', E') \in \widetilde{\Br}^g(\Gamma)$ 
in the same fiber $\sigma_m^{-1}[D]$.
This means that $[mx + E] = [mx' + E'] = [D]$.
Suppose $\pi_1(x,E) = \pi_1(x',E')$, i.e. that $x = x'$.
Then
\[ [D - (m-1)x] = [x + E] = [x + E'] \in \Pic^g(\Gamma). \]
Since both $(x+E)$ and $(x+E')$ are break divisors,
the uniqueness of break divisor representatives (Theorem~\ref{thm:break-div})
implies that $E = E'$.
This shows that the restriction of $\pi_1$  to $\sigma_m^{-1}[D]$
is injective, as desired.
\end{proof}

Let $(G,\ell)$ be a combinatorial model for $\Gamma$,
which induces a decomposition of break divisors $\Br^g(\Gamma)$
into a union  of cells 
\begin{equation}
\label{eq:abks}
\Br^g(\Gamma) = \bigcup_{T\in \mathcal T(G)} C_T
\end{equation}
indexed by spanning trees of $G$,
where the interior of each cell $C_T$ is homeomorphic to an open hypercube.
(See Section~\ref{sec:break-div} or  \cite{ABKS}.)
Note that $\Br^g(\Gamma)$ is homeomorphic to $\Pic^g(\Gamma)$.
The ABKS decomposition \eqref{eq:abks} of $\Br^g(\Gamma)$
induces a decomposition 
\begin{equation}
\label{eq:abks-pointed}
\widetilde\Br^g(\Gamma) = \bigcup_{T\in \mathcal T(G)}
\left( \bigcup_{e \not\in E(T)}\widetilde C_{T,e} \right)
\end{equation}
where the second union is over edges $e$ of $G$ not contained in 
the spanning tree $T$.
There are $g$ such edges for any $T$.
Namely,
\[ \widetilde C_{T,e} = \{ (x,E) \in \widetilde \Br^g(\Gamma) : x + E \in C_T,\, x \in e \}\]
The map $\widetilde \Br^g(\Gamma) \to \Br^g(\Gamma)$
sends the cell $\widetilde C_{T,e}$ surjectively to $C_T$.
On the interior $C_T^\circ$ of each cell,
each fiber of  $\widetilde \Br^g(\Gamma) \to \Br^g(\Gamma)$ contains exactly $g$ points.

If $\kappa(G) = \#\mathcal T(G)$ denotes the number of spanning trees of $G$,
 the ABKS decomposition \eqref{eq:abks-pointed}
decomposes $\widetilde \Br^g(\Gamma)$
into a union of $g \cdot \kappa(G)$ cells.

\begin{eg}
In Figure~\ref{fig:abks-pointed}, we show the decomposition of $\widetilde\Br^2(\Gamma)$
into six cells $\widetilde C_{T,e}$,
where $\Gamma$ is a theta graph.
This graph has genus $g = 2$
and $\kappa(G) = 3$ spanning trees.
In this case 
$\Br^2(\Gamma) \cong \Pic^2(\Gamma) \cong \RR^2 / \ZZ^2$ is a genus $1$ surface
(cf. Example~\ref{eg:abks}, Theorem~\ref{thm:abel-jacobi}),
and
$\widetilde \Br^2(\Gamma)$ is a surface of genus $2$.
The map $\widetilde\Br^2(\Gamma) \to \Br^2(\Gamma)$
is a branched double cover ramified at two points,
corresponding to the two break divisors which consist of 
two chips at a trivalent vertex of $\Gamma$. 

\begin{figure}[h]
\centering
\begin{tikzpicture}[scale=1.0]
	\coordinate (O) at (0,0);
	\coordinate (Q) at (-1.3,1);
	\coordinate (R) at (1.3,2);
	\coordinate (S) at (0,3);
	\coordinate (Qd) at (1.3,-1);
	\coordinate (Rd) at (-1.3,-2);
	\coordinate (Sd) at (0,-3);
	\coordinate (Ol) at ($(O) + (-4,0)$);
	\coordinate (Ql) at ($(Q) + (-4,0)$);
	\coordinate (Sl) at ($(S) + (-4,0)$);
	\coordinate (Rdl) at ($(Rd) + (-4,0)$);
	\coordinate (Sdl) at ($(Sd) + (-4,0)$);
	\coordinate (Or) at ($(O) + (4,0)$);
	\coordinate (Rr) at ($(R) + (4,0)$);
	\coordinate (Sr) at ($(S) + (4,0)$);
	\coordinate (Qdr) at ($(Qd) + (4,0)$);
	\coordinate (Sdr) at ($(Sd) + (4,0)$);

	\draw (O) -- (R) -- (S) -- (Q) -- cycle;
	\draw (O) -- (Rd) -- (Sd) -- (Qd) -- cycle;
	\draw (Q) -- (Ql) -- (Ol) -- (Rdl) -- (Rd);
	\draw (R) -- (Rr) -- (Or) -- (Qdr) -- (Qd);
	\draw (Ol) -- (Or);
	\begin{scope}[line width=2.0pt]
		\draw (Rdl) -- (Rd) -- (Sd);
		\draw (Ql) -- (Q) -- (S);
		\draw (Rr) -- (Or) -- (Qdr);
		\draw (Ol) -- (O) -- (R);
		\draw (O) -- (Qd);
		\end{scope}
	\begin{scope}[decoration={
		markings,
		mark=at position 0.5 with {\arrow{Latex[scale width=2]}}
	}]
		\draw[postaction={decorate}] (Or) -- (Rr);
		\draw[postaction={decorate}] (Q) -- (S);
	\end{scope}
	\begin{scope}[decoration={
		markings,
		mark=at position 0.6 with {\arrow{Latex[open,scale=2]}}
	}]
		\draw[postaction={decorate}] (Sd) -- (Rd);
		\draw[postaction={decorate}] (Qdr) -- (Or);
	\end{scope}
	\begin{scope}[decoration={
		markings,
		mark=at position 0.5 with {\arrow{>[scale=1.5]>[scale=1.5]}}
	}]
		\draw[postaction={decorate}] (Ql) -- (Q);
		\draw[postaction={decorate}] (Rdl) -- (Rd);
	\end{scope}
	\begin{scope}[decoration={
		markings,
		mark=at position 0.5 with {\arrow{>[scale=1.2]}}
	}]
		\draw[postaction={decorate}] (R) -- (Rr);
		\draw[postaction={decorate}] (Qd) -- (Qdr);
	\end{scope}
	\begin{scope}[decoration={
		markings,
		mark=at position 0.5 with {\arrow{Latex[scale width=1.5]}}
	}]
		\draw[postaction={decorate}] (Rdl) -- (Ol);
		\draw[postaction={decorate}] (Sd) -- (Qd);
	\end{scope}
	\begin{scope}[decoration={
		markings,
		mark=at position 0.5 with {\arrow{Latex[open,scale width=1.5]}}
	}]
		\draw[postaction={decorate}] (Ol) -- (Ql);
		\draw[postaction={decorate}] (R) -- (S);
	\end{scope}

	\begin{scope}[shift={(-0.1,1.1)}, scale=0.4]
		\coordinate (A) at (0,0);
		\coordinate (C) at (0,2.0);
		\coordinate (E1) at (-1,1);
		\coordinate (E2) at (0,1);
		\coordinate (E3) at (1.5,1);
	
		\draw (A) -- (C);
		\draw[rounded corners=6] (A) -- ++(-1,0) -- ++(0,2.0) -- (C);
		\draw[rounded corners=6] (A) -- ++(1.5,0) -- ++(0,2.0) -- (C);
	
		\foreach \P in {E1,E2} {
			\fill (\P) circle (4pt);
		}
		\draw (E2) circle (6pt);
	\end{scope}
	\begin{scope}[shift={(2.2,0.5)}, scale=0.4]
		\coordinate (A) at (0,0);
		\coordinate (C) at (0,2.0);
		\coordinate (E1) at (-1,1);
		\coordinate (E2) at (0,1);
		\coordinate (E3) at (1.5,1);
	
		\draw (A) -- (C);
		\draw[rounded corners=6] (A) -- ++(-1,0) -- ++(0,2.0) -- (C);
		\draw[rounded corners=6] (A) -- ++(1.5,0) -- ++(0,2.0) -- (C);
	
		\foreach \P in {E1,E3} {
			\fill (\P) circle (4pt);
		}
		\draw (E3) circle (6pt);
	\end{scope}
	\begin{scope}[shift={(2.5,-0.9)}, scale=0.4]
		\coordinate (A) at (0,0);
		\coordinate (C) at (0,2.0);
		\coordinate (E1) at (-1,1);
		\coordinate (E2) at (0,1);
		\coordinate (E3) at (1.5,1);
	
		\draw (A) -- (C);
		\draw[rounded corners=6] (A) -- ++(-1,0) -- ++(0,2.0) -- (C);
		\draw[rounded corners=6] (A) -- ++(1.5,0) -- ++(0,2.0) -- (C);
	
		\foreach \P in {E2,E3} {
			\fill (\P) circle (4pt);
		}
		\draw (E3) circle (6pt);
	\end{scope}
	\begin{scope}[shift={(-2.5,0.1)}, scale=0.4]
		\coordinate (A) at (0,0);
		\coordinate (C) at (0,2.0);
		\coordinate (E1) at (-1,1);
		\coordinate (E2) at (0,1);
		\coordinate (E3) at (1.5,1);
	
		\draw (A) -- (C);
		\draw[rounded corners=6] (A) -- ++(-1,0) -- ++(0,2.0) -- (C);
		\draw[rounded corners=6] (A) -- ++(1.5,0) -- ++(0,2.0) -- (C);
	
		\foreach \P in {E2,E3} {
			\fill (\P) circle (4pt);
		}
		\draw (E2) circle (6pt);
	\end{scope}
	\begin{scope}[shift={(-2.8,-1.4)}, scale=0.4]
		\coordinate (A) at (0,0);
		\coordinate (C) at (0,2.0);
		\coordinate (E1) at (-1,1);
		\coordinate (E2) at (0,1);
		\coordinate (E3) at (1.5,1);
	
		\draw (A) -- (C);
		\draw[rounded corners=6] (A) -- ++(-1,0) -- ++(0,2.0) -- (C);
		\draw[rounded corners=6] (A) -- ++(1.5,0) -- ++(0,2.0) -- (C);
	
		\foreach \P in {E1,E3} {
			\fill (\P) circle (4pt);
		}
		\draw (E1) circle (6pt);
	\end{scope}
	\begin{scope}[shift={(-0.1,-1.8)}, scale=0.4]
		\coordinate (A) at (0,0);
		\coordinate (C) at (0,2.0);
		\coordinate (E1) at (-1,1);
		\coordinate (E2) at (0,1);
		\coordinate (E3) at (1.5,1);
	
		\draw (A) -- (C);
		\draw[rounded corners=6] (A) -- ++(-1,0) -- ++(0,2.0) -- (C);
		\draw[rounded corners=6] (A) -- ++(1.5,0) -- ++(0,2.0) -- (C);
	
		\foreach \P in {E1,E2} {
			\fill (\P) circle (4pt);
		}
		\draw (E1) circle (6pt);
	\end{scope}
\end{tikzpicture}
\caption{ABKS decomposition of $\widetilde\Br^2(\Gamma)$, for the metric graph in Figure~\ref{fig:abks}.}
\label{fig:abks-pointed}
\end{figure}

In Figure~\ref{fig:abks-pointed},
each cell $\widetilde C_{T,e}$ shows a representative break divisor $x+E$
where the point $x\in e$ 
is marked with an extra outline.
Boundary edges of $\widetilde C_{T,e}$
which have $x$ on an endpoint of $e$ are marked in bold.
Edges on the boundary are glued to the parallel boundary edge which 
has the same weighting (bold or unbold).
\end{eg}

\subsection{Point-set topology} 

\begin{dfn}
\label{dfn:branched-cover}
Let $M$ and $N$ be compact Hausdorff spaces,
and let $N$ be path-connected.
We say $p: M \to N$ is a {\em branched covering map} if 
\begin{enumerate}[(i)]
\item 
\label{it:bc-i}
$p$ is continuous and surjective
\item \label{it:bc-ii}
$p$ is an open map (the image of an open set is open)
\item \label{it:bc-iii}
$p^{-1}(y)$ is finite for each $y\in N$ 
\end{enumerate}
and there exists a closed subset $R\subset N$ 
such that 
\begin{enumerate}[(i)]
\setcounter{enumi}{3}
\item \label{it:bc-iv}
 $N\backslash R$ is path-connected
\item
\label{it:bc-v}
$R$ has empty interior in $N$
\item
\label{it:bc-vi}
the restriction of $p$ to 
$M \backslash p^{-1}(R) \to N\backslash R$
is a topological covering map.
\end{enumerate}
The subspace $R$ is a {\em ramification locus} of $p$, 
and the preimage $p^{-1}(R)$ is a {\em branch locus}.
(Note that properties (ii) and (v) imply $p^{-1}(R)$
has empty interior in $M$.)
\end{dfn}
It is straightforward to verify that the map $\widetilde \Br^g(\Gamma) \to \Br^g(\Gamma)$
from Section~\ref{sec:wstab-setup} is a branched covering.
We show below, in Proposition~\ref{prop:w-count}, that in fact 
each $\sigma_m : \widetilde\Br^g(\Gamma) \to\Pic^{m+g-1}(\Gamma)$,
for $m\geq 1$, is a branched covering.

Recall that a map is {\em proper} if the preimage of a compact set is compact.
Recall that a map $f : X \to Y$ is a {\em local homeomorphism}
if, for any $x \in X$ there is an open neighborhood $U$ containing $ x$
such that $f(U)$ is open in $Y$ and the restriction $U \to f(U)$
is a homeomorphism.
A covering map is always a local homeomorphism, but the converse is not true.

The following lemma will be used to check the last condition \eqref{it:bc-vi}
in Definition~\ref{dfn:branched-cover},
that the restriction $M \backslash p^{-1}(R) \to N\backslash R$
is a covering map.
\begin{lem}
\label{lem:cover-local}
Suppose $p: X \to Y$ is a local homeomorphism
between locally compact, Hausdorff spaces.
If $p$ is proper and surjective,
then $p$ is a covering map.
\end{lem}
This is a standard exercise in point-set topology;
see e.g. \cite[Lemma 2]{Ho}. 

\begin{lem}
\label{lem:cover-deg}
Suppose $p : M\to N$ is a branched covering
with ramification locus $R \subset N$ such that the restriction 
$p : M \backslash p^{-1}(R) \to N\backslash R$
is a covering map of degree $d$.
Then for any $y \in N$, the preimage $p^{-1}(y)$
has cardinality at most $d$.
\end{lem}
Note: the restriction of $p$ to $M \backslash p^{-1}(R) \to N\backslash R$
has constant degree $d$ because 
in the definition of branched cover, $N\backslash R$ is assumed to be path connected.
\begin{proof}
Let $y\in R$ be a point in the ramification locus,
and let $x_1, \ldots, x_k$ be the points in the preimage $p^{-1}(y)$.
Since $M$ is Hausdorff, we may choose open neighborhoods $U_1, \ldots, U_k$ with $x_i\in U_i$
which are disjoint, $U_i\cap U_j = \emptyset$.
Let $C = M \backslash (U_1 \cup \cdots \cup U_k)$ be the complement of these neighborhoods,
which is closed in $M$.
Since $M$ is compact and $N$ is Hausdorff, the image $p(C)$ is closed in $N$. 
Thus $V = N \backslash p(C)$ is open and nonempty since $y\in V$.
Note that by construction 
$p^{-1}(V)  = M \backslash p^{-1}( p(C))\subset M \backslash C = U_1 \cup \cdots \cup U_k$.

Let $U'_i$ be the intersection of $p^{-1}(V)$ with $U_i$,
which is open and nonempty because $x_i\in U_i'$.
Since the $U_i$ were chosen to be disjoint, 
$p^{-1}(V) = U_1' \sqcup \cdots \sqcup U_k'$.

Note that $p$ is an open map (by definition of branched cover), so the intersection
$p(U_1') \cap \cdots \cap p(U_k')$ is an open neighborhood of $y$ in $N$.
Since $R$ has empty interior in $N$, 
we can choose some point 
\begin{equation*}
z \in \left(p(U_1') \cap \cdots \cap p(U_k')\right) \backslash R  \subset V \backslash R.
\end{equation*}
By the assumption that $M\backslash p^{-1}(R) \to N\backslash R$ is a degree $d$ covering map,
the preimage $p^{-1}(z)$ contains $d$ points 
$w_1, \ldots, w_d$.
Since $z\in V$ by construction,
each $w_i \in p^{-1}(V) = U_1' \sqcup \cdots \sqcup U_k'$
so $w_i$ lies within $U_j'$ for some unique $j \in \{ 1,\ldots, k\}$.
This relation defines a map  $\pi: \{1,\ldots d\} \to \{1, \ldots, k\}$.
Moreover, the map $\pi$ is surjective because  $z\in p(U_j')$
for each $j \in \{1,\ldots,k\}$.
This proves that $k \leq d$, so
the preimage $p^{-1}(y)$ has cardinality at most $d$ as desired.
\end{proof}

\subsection{Proofs}
\begin{prop}
\label{prop:w-finite}
For any divisor $D$, the stable Weierstrass locus $\Wstab(D)$
is a finite subset of $\Gamma$.
\end{prop}
\begin{proof}
If $D$ has degree $n < g$, 
the stable Weierstrass locus is defined to be empty.
Thus we assume below that $D$ has degree $n \geq g$.

Recall that 
$\widetilde \Br^g(\Gamma) = \{(x,E) \in \Gamma \times \Sym^{g-1}(\Gamma) 
 : \text{ $x+E$ is a break divisor} \}$
and that
$\sigma_m : \widetilde \Br^g(\Gamma) \to \Pic^{m+g-1}(\Gamma)$
is defined by 
\begin{equation*}
\sigma_m : (x,E) \mapsto [mx + E] .
\end{equation*}
Recall that $\pi_1$ 
denotes the projection  $\pi_1(x,E) = x$.
(See Section~\ref{sec:wstab-setup}.)
By Lemma~\ref{lem:wstab},
for a divisor $D$ of degree $m+g-1$ we have
$\Wstab(D) = \pi_1 ( \sigma_m^{-1}[D] )$.
Hence it suffices to show that the preimage
$\sigma_m^{-1}[D]$ is a finite set.

Let $(G,\ell)$ be a combinatorial model for $\Gamma$,
which induces the ABKS decomposition
$
\Br^g(\Gamma) = \bigcup_{T\in \mathcal T(G)} C_T
$,
where the cells $C_T$  are indexed by spanning trees of $G$.
The ABKS decomposition of $\Br^g(\Gamma)$
induces a decomposition 
\begin{equation*}
\widetilde\Br^g(\Gamma) = \bigcup_{T\in \mathcal T(G)}
\left( \bigcup_{e \not\in E(T)}\widetilde C_{T,e} \right) .
\end{equation*}
Let $\sigma_m^{(T,e)} : \widetilde C_{T,e} \to \Pic^{m+g-1}(\Gamma)$ 
denote the restriction of $\sigma_m$ to $\widetilde C_{T,e}$.


\ul{Claim}: The preimage of $[D]$ under 
$\sigma_m^{(T,e)} : \widetilde C_{T,e} \to \Pic^{m+g-1}(\Gamma)$
is finite.

\noindent This Claim implies that the preimage $\sigma_m^{-1}[D]$
is a finite set, 
since $\widetilde \Br^g(\Gamma)$ is covered by
finitely many $\widetilde C_{T,e}$.

\textit{Proof of Claim}:
The map 
$\sigma_m^{T,e} : \widetilde C_{T,e} \to \Pic^{m+g-1}(\Gamma)$
is locally defined by a linear map, 
which we show is full rank.
For a spanning tree $T = G \backslash \{e, e_2, \ldots, e_g\}$,
there is a natural surjective parametrization 
$ \prod_{i=1}^g [0, \ell(e_i)] \to \widetilde C_{T,e}$.

Let $f_m^{T,e}$ denote the lift of 
 $\prod_{i=1}^g [0,\ell(e_i)]\to \widetilde C_{T,e} \xrightarrow{} \Pic^{m+g-1}(\Gamma)$ 
  to the universal cover 
$\RR^g  \to \Pic^{m+g-1}(\Gamma)$.
\[ \begin{tikzcd}
 \prod_{i=1}^g [0,\ell(e_i)] \ar[d] \ar[r,dashed,"f_m^{T,e}"] & \RR^g \ar[d,"\pi"] \\
\widetilde C_{T,e} \ar[r,"\sigma_m^{T,e}"]  & \Pic^{m+g-1}(\Gamma)
\end{tikzcd} \]
When $m = 1$,
coordinates may be chosen on $\RR^g$ such that $f_1^{T,e}$
is represented by the identity matrix.
Using these same coordinates on $\RR^g$
(up to a translation from $\Pic^g$ to $\Pic^{m+g-1}$), for $m \geq 1$
the definition $\sigma_m(x,E) = [mx + E]$
implies that $f^m_{T,e}$
is represented by the diagonal matrix
\begin{equation*}
 \begin{pmatrix}
m & & \\
 & 1 & \\
 & & \ddots \\
 & & & 1
\end{pmatrix} .
\end{equation*}
This shows that $f_m^{T,e}$ is locally injective, 
which implies $\sigma_m^{T,e}$ is locally injective as well.
Thus for any $[D] \in \Pic^{m+g-1}(\Gamma)$,
the preimage under $\sigma_m^{T,e}$  is a discrete subset of $\widetilde C_{T,e}$.
Since $\widetilde C_{T,e}$ is compact, 
the preimage of $[D]$ is finite as claimed.
\end{proof}

In the following proposition, ``generic''
means the statement holds for $[D] \in \Pic^n(\Gamma)$
outside of a nowhere-dense exceptional set.
\begin{prop}
\label{prop:w-count}
For any divisor class $[D]$ of degree $\en\geq g$,
we have
\[ \#\Wstab(D) \leq g(\en - g + 1) .\]
For a generic divisor class $[D]$ of degree $\en\geq g$,
the stable Weierstrass locus $\Wstab(D)$ has cardinality
$ \#\Wstab(D) = g(\en - g + 1) $.
\end{prop}
\begin{proof}
Let $\widetilde{\Br}^g(\Gamma)$,
 $\sigma_m : \widetilde \Br^g(\Gamma) \to \Pic^{m+g-1}(\Gamma)$,
 and $\pi_1 : \widetilde \Br^g(\Gamma) \to \Gamma$
be defined as in Section~\ref{sec:wstab-setup}.
Recall that for a divisor $D$ of degree $m+g-1$, we have
$ \# \Wstab(D) = \#( \sigma_m^{-1}[D])$
by Lemma~\ref{lem:wstab}.
Thus it suffices to show that 
$\sigma_m: \widetilde{\Br}^g(\Gamma) \to \Pic^{m+g-1}(\Gamma)$
is a branched covering map of degree $gm$, for any $m\geq 1$.
From this, 
Lemma~\ref{lem:cover-deg} implies the inequality $\#\Wstab(D) \leq gm$
and
Definition~\ref{dfn:branched-cover}
implies that equality holds for $[D]$ outside of the ramification locus.

(If $D$ has degree $\en = m+g-1$, then $gm = g(\en-g+1)$.)

\ul{Claim 1}: 
The map $\sigma_m: \widetilde\Br^g(\Gamma) \to \Pic^{m+g-1}(\Gamma)$ 
is open, for any $m\geq 1$.

\textit{Proof of Claim 1}: 
As above, let $(G,\ell)$ be a combinatorial model for $\Gamma$,
and
\begin{equation*}
\widetilde \Br^g(\Gamma) = \bigcup_{T\in \mathcal T(G)} \bigcup_{e\not \in E(T)} \widetilde C_{T,e}
\end{equation*}
the
 induced ABKS decomposition.
(See Section~\ref{sec:wstab-setup}.)
The map $\sigma_m$ is naturally a piecewise affine map
with domains of linearity $\widetilde C_{T,e}$. 

To show that $\sigma_m$ is open,
it suffices to check that for any $(x_0, E_0) \in \widetilde\Br^g(\Gamma)$,
the image of a neighborhood contains points in all tangent directions around 
$\sigma_m(x_0,E_0) \in \Pic^{m+g-1}(\Gamma)$. 
To check this, we observe how $\sigma_m$ restricts 
to each domain of linearity $\widetilde C_{T,e}$
containing $(x_0, E_0)$.
We will show that the behavior of $\sigma_m$ on tangent directions
does not depend on the integer $m$.

For a point $(x_0, E_0)$ in $\widetilde C_{T,e}$,
let $\cone( \sigma_m^{T,e}(x_0,E_0) )$ denote the positive cone in $\RR^g$ 
spanned by 
\[ \sigma_m(x,E) - \sigma_m(x_0,E_0) \quad 
 \text{for $(x,E)$ in a neighborhood of $(x_0, E_0)$ in $\widetilde C_{T,e}$}.\]
(Here we identify $\RR^g$ with the tangent space of $\Pic^{0}(\Gamma)$  at the identity.)
Since $\sigma_m$ 
is affine on $\widetilde C_{T,e}$,
this cone does not depend on the neighborhood chosen.
Since $m\geq 1$,
the positive span of 
\[ \sigma_m(x,E) - \sigma_m(x_0,E_0)  = m[x - x_0] + [E - E_0]
\quad \text{for $(x,E)$ in } \widetilde C_{T,e}\]
is equal to the positive span of
\[ \sigma_1(x+E) - \sigma_1(x_0 + E_0) = [x-x_0] + [E - E_0]
\quad \text{for $(x, E)$ in } \widetilde C_{T,e} ,\]
so $\cone( \sigma_m^{T,e}(x_0,E_0) ) = \cone( \sigma_1^{T,e}(x_0,E_0) )$.
This holds for all cells  $\widetilde C_{(T,e)}$ containing $(x_0,E_0)$.

Hence to show that $\sigma_m$ is open,
it suffices to show that 
$\sigma_1 : \widetilde\Br^g(\Gamma) \to \Pic^g(\Gamma)$
is open.
This is clear from the construction of $\widetilde\Br^g(\Gamma)$
as a branched cover $\widetilde \Br^g(\Gamma) \to \Br^g(\Gamma)$,
and from Theorem~\ref{thm:break-div}
which states that $\Br^g(\Gamma) \to \Pic^g(\Gamma)$ is a homeomorphism.

\ul{Claim 2}: 
The map $\sigma_m: \widetilde\Br^g(\Gamma) \to \Pic^{m+g-1}(\Gamma)$ 
is a branched cover, for any $m\geq 1$.

\textit{Proof of Claim 2}: 
In the definition of branched cover, Definition~\ref{dfn:branched-cover},
condition \eqref{it:bc-ii} was verified by Claim 1
and condition \eqref{it:bc-iii} was verified by Proposition~\ref{prop:w-finite}.
Condition \eqref{it:bc-i} holds ($\sigma_m$ is surjective)
because $\sigma_m$ is an open map from a compact space to a connected, Hausdorff space.

We first identify a ramification locus $R$ for $\sigma_m$,
and then apply Lemma~\ref{lem:cover-local}
to show that the restriction of $\sigma_m$ 
away from $R$
is a covering map.

Let
$
\Br^g(\Gamma) = \bigcup_{T\in \mathcal T(G)} C_T
$
be the ABKS decomposition 
induced by a combinatorial model $\Gamma = (G,\ell)$
(see Section~\ref{sec:break-div}).
Let $\Ztwo \subset \Br^g(\Gamma)$ denote 
the union of faces of $C_T$ of codimension at least $2$,
and let $\Utwo = \Br^g(\Gamma) \backslash \Ztwo$.
In other words,
\begin{equation*}
\Utwo = \bigcup_{T\in \mathcal T(G)} \{\text{interior $C_T^\circ$ of $C_T$}\}
  \cup\{\text{interiors of facets of $\partial C_T$} \} .
\end{equation*} 
More concretely in terms of break divisors,
given a set of edges $e_1, \ldots, e_g$ in $G$ whose complement is a spanning tree,
$\Utwo$ contains break divisors which are a sum of $g$ points 
taken from the interior of each $e_1,e_2,\ldots,e_g$,
and divisors which are a sum of one endpoint of $e_1$ and a point in the interior of each $e_2,\ldots,e_g$.
We assume our  combinatorial model $(G,\ell)$ is chosen to have no loops, 
so that each cell $ C_T$ in the ABKS decomposition has $2g$ distinct boundary facets.

Note that for a break divisor $E$, 
\begin{equation}
\label{eq:2}
\text{if } E\in \Utwo,\text{ the support of $E$ consists of $g$ distinct points}.
\end{equation}

We let $\widetilde \Ztwo$ and $\widetilde \Utwo$
denote the preimages of $\Ztwo$ and $\Utwo$
under $\sigma: \widetilde\Br^g(\Gamma) \to \Br^g(\Gamma)$.
Note that with respect to the ABKS decomposition 
$$\widetilde \Br^g(\Gamma) = \bigcup_{T\in \mathcal T(G)} \bigcup_{e \not\in E(T)} \widetilde C_{T,e},$$
$\widetilde \Ztwo$
is the union of codimension $2$ faces of $\widetilde C_{T,e}$,
and $\widetilde \Utwo = \widetilde\Br^g(\Gamma) \backslash \widetilde \Ztwo$.
Thus $\widetilde \Ztwo$ is a closed subset of codimension $2$ 
and $\widetilde \Utwo$ is a dense open subset of $\widetilde \Br^g(\Gamma)$.

Next, let $R = R_m = \sigma_m(\widetilde \Ztwo)$.
We will show that $R$ is a valid ramification locus for the branched cover $\sigma_m$.
The conditions \eqref{it:bc-iv} and \eqref{it:bc-v} hold
because $R$ is a codimension $2$ submanifold of the connected manifold $\Pic^{m+g-1}(\Gamma)$.
It remains to check condition \eqref{it:bc-vi},
that the restriction 
\begin{equation}
\label{eq:m-cover}
 \sigma_m|_{\widetilde \Br^g(\Gamma) \backslash \sigma_m^{-1}(R)} : 
\widetilde \Br^g(\Gamma) \backslash \sigma_m^{-1}(R) \to \Pic^{m+g-1}(\Gamma) \backslash R
\end{equation}
away from ramification is a covering map.
To check this condition, we apply Lemma~\ref{lem:cover-local}.
It is clear that the domain and codomain of \eqref{eq:m-cover}
are locally compact Hausdorff spaces.\footnote{
The domain is locally compact and Hausdorff because it is an open subspace of 
$\widetilde\Br^g(\Gamma)$ which
is a finite CW complex, hence compact and Hausdorff.
The same holds for the codomain,
as an open subspace of  $\Pic^{m+g-1}(\Gamma)\cong \RR^g/\ZZ^g$.}
The map in \eqref{eq:m-cover} is surjective by construction;
it is proper because $\sigma_m$ is a map from a compact space
to a Hausdorff space, hence proper.
It remains to check that \eqref{eq:m-cover} is a local homeomorphism,
which we leave for the next claim.
Note that the domain of \eqref{eq:m-cover} is contained in $\widetilde\Utwo$:
\begin{equation*}
\widetilde\Br^g(\Gamma) \backslash \sigma_m^{-1}(R)
= \widetilde\Br^g(\Gamma) \backslash \sigma_m^{-1}( \sigma_m(\widetilde \Ztwo) )
\subset \widetilde\Br^g(\Gamma) \backslash \Ztwo
= \widetilde \Utwo .
\end{equation*}
Assuming Claim 3, 
Lemma~\ref{lem:cover-local} implies that 
$\sigma_m$ is a covering map 
away from the ramification locus $R$, which completes the proof of Claim 2.

\ul{Claim 3}:
The restriction of $\sigma_m$ to $\widetilde \Utwo \to \Pic^{m+g-1}(\Gamma)$
is a local homeomorphism, for any $m\geq 1$.

\textit{Proof of Claim 3}:
First consider $m = 1$.
Observation \eqref{eq:2} implies that 
\begin{equation}
\label{eq:3}
\text{the restriction $\sigma_1|_{\widetilde \Utwo}  : \widetilde \Utwo \to \Utwo$
is a (unbranched) covering of degree $g$.}
\end{equation}
Since $\Utwo \subset \Pic^g(\Gamma)$ is open,
it follows that $\sigma_1: \widetilde \Utwo \to \Pic^g(\Gamma)$ is a local homeomorphism.

Recall that $\widetilde \Utwo$ is the union of the interior of $\widetilde C_{T,e}$
and the interiors of facets of $\partial \widetilde C_{T,e}$, over all $(T,e)$.
In the interior of $\widetilde C_{T,e}$, 
$\sigma_m$ can be expressed as a full-rank linear map so it is a local homeomorphism.
Now consider how $\sigma_m$ acts near the interior of a facet of $\partial \widetilde C_{T,e}$.
We claim that each facet is shared by exactly two cells.

Suppose $T = G \backslash \{ e = e_1, e_2, \ldots, e_g\}$.
There are $2g$ facets of the boundary $\partial \widetilde C_{T,e}$,
indexed by choosing an edge $e_j$
and choosing one of its two endpoints.
For a fixed index $j$  in $\{1,\ldots,g\}$
and $v(e_j)$  a fixed endpoint of $e_j$,
 the corresponding facet of $\partial \widetilde C_{T,e}$
consists of pairs $(x,E) \in \widetilde \Br^g(\Gamma)$
of the form
\begin{equation}
\label{eq:CT-facet}
\widetilde F_{(T,e)}^{(j,v)} =  \{ (x = x_1,E = x_2 + \cdots + x_g) : 
\begin{array}{l}
x_j = v(e_j), \\
 x_i \in e_i^\circ \text{ for }i = 1,\ldots g,\, i\neq j \} . 
 \end{array}
\end{equation}

Let $G_j = T \cup e_j$. 
Since $e_j \not \in T$, the graph $G_j$ contains a unique cycle, which must contain
$v(e_j) \in e_j$.
Let $e_j'$ be the unique edge $\neq e_j$ in this cycle which also borders $v(e_j)$,
and let $T' = G_j \backslash e_j' = (T \cup e_j) \backslash e_j'$.
Then $\widetilde C_{T',e'}$ is the only other cell containing the facet \eqref{eq:CT-facet},
where $e' = e_1'$ if $j = 1$, and $e' = e$ otherwise.
The facet \eqref{eq:CT-facet} is then the relative interior of 
$\widetilde C_{T,e} \cap \widetilde C_{T',e'}$

As before, let $f_m^{T,e}$ denote the lift of 
$\widetilde C_{T,e} \to \Pic^{m+g-1}(\Gamma)$ in the diagram
\[ \begin{tikzcd}
 \prod_{i=1}^g [0,\ell(e_i)] \ar[d] \ar[r,dashed,"f_m^{T,e}"] & \RR^g \ar[d,"\pi"] 
 & \prod_{i=1}^g [0, \ell(e_i')] \ar[l, dashed, "f_m^{T',e'}",swap] \ar[d] \\
\widetilde C_{T,e} \ar[r]  & \Pic^{m+g-1}(\Gamma) 
 & \widetilde C_{T',e'} \ar[l] 
\end{tikzcd} \]
and define $f_m^{T',e'}$ analogously.

We may choose coordinates (depending on $T$) on 
$\RR^g$
such that 
\begin{equation*}
\label{eq:matrix-form}
\text{the matrix representing $f_m^{T,e}$ is} \quad
 \begin{pmatrix}
m & & \\
 & 1 & \\
 & & \ddots \\
 & & & 1
\end{pmatrix} .
\end{equation*}
In these same coordinates,
the matrix representing $f_m^{T',e'}$ is
\begin{equation*}
 \begin{pmatrix}
-m & & \\
* & 1 & \\
* & & \ddots \\
* & & & 1
\end{pmatrix} 
\text{ if } j = 1, \quad\text{ or }\quad
 \begin{pmatrix}
m & & * & \\
& \ddots & * &\\ 
& & -1 & \\
& & * & \ddots 
\end{pmatrix} 
\text{ if } j \in \{ 2,\ldots, g\}.
\end{equation*}
(Recall that $j $ is the index specifying which edge $e_j \in G\backslash T$
has a break divisor chip on one of its endpoints;
$e_j$ is the unique edge in $T' \backslash T$.)
This shows that $\sigma_m$ is a local homeomorphism 
in a  neighborhood of the chosen facet of $\partial \widetilde C_{T,e}$.


\ul{Claim 4}: The branched cover 
$\sigma_m : \widetilde\Br^g(\Gamma) \to \Pic^{m+g-1}(\Gamma)$
has degree $gm$.

\textit{Proof of Claim 4}:
When $m = 1$, 
it is clear that $\sigma_1: \widetilde \Br^g(\Gamma) \to \Pic^g(\Gamma) \cong \Br^g(\Gamma)$
is a degree $g$ branched cover.
When $m> 1$, we note that $\sigma_m$ differs from $\sigma_1$
by a scaling factor of $m$,
i.e. on a sufficiently small neighborhood $U \subset \widetilde{\Br}(\Gamma)$,
the Haar measure of $\sigma_m(U)$ is $m$-times as large as  
the Haar measure of $\sigma_1(U)$.
(The space $\Pic^{m+g-1}(\Gamma)$ carries a Haar measure
since it is a torsor for the compact topological group $\Pic^0(\Gamma)$.)
This implies that the degree of $\sigma_m$ as a branched cover
must be $m$ times the degree of $\sigma_1$,
so $\sigma_m$ must have degree $gm$ as desired.
\end{proof}

\begin{customthm}{A}
Let $\Gamma$ be a compact, connected metric graph of genus $g$. 
\begin{enumerate}[(a)]

\item For a generic divisor class of degree $\en\geq g$, 
the  Weierstrass locus $W(D)$ is finite
with cardinality $\#W(D) = g(\en-g+1)$.
For a generic divisor class of degree $\en<g$, $W(D)$ is empty.

\item For an arbitrary divisor class of degree $\en\geq g$,
the stable Weierstrass locus 
$\Wstab(D)$ is finite with cardinality
\[ \# \Wstab(D) \leq g(\en-g+1),\]
and equality holds for a generic divisor class.

\end{enumerate}
\end{customthm}
\begin{proof}
Part (b) is a restatement of Proposition \ref{prop:w-count}.

For part (a), 
first suppose $n < g$.
The space 
$\Pic^n(\Gamma)$ has dimension $g$,
while the subspace of effective divisor classes 
has dimension at most $n$.
Thus a generic divisor class in $\Pic^n(\Gamma)$
is not effective, assuming  $n < g $.
By Remark~\ref{rmk:w-empty}, the Weierstrass locus is empty
for a non-effective divisor class.

Now suppose $n \geq g$.
To prove (a), it suffices to show that $W(D) = \Wstab(D)$
for a generic divisor class, since then part (b) applies.
To compare  $W(D)$ with  $\Wstab(D)$,
we construct a map $X\to \Pic^\en(\Gamma)$
whose fiber over $[D]$ is the Weierstrass locus $W(D)$;
this parallels our construction in Section~\ref{sec:wstab-setup}
for $\Wstab(D)$.

For $ m \geq 1$, let 
$s_m : \Gamma \times \Sym^{g-1}(\Gamma) \to \Pic^{m+g-1}(\Gamma)$
denote the map
\[ s_m(x,E) = [ mx + E] .\]
Let $\pi_1 : \Gamma \times \Sym^{g-1}(\Gamma) \to \Gamma$
denote projection to the first factor.

The Riemann--Roch formula, Theorem~\ref{thm:riemann-roch},
implies that  a generic divisor class $[D]\in\Pic^{m+g-1}(\Gamma)$
has rank $r(D) = (m+g-1) -g = m-1$.
For such a divisor,
\[ W(D) = \{ x \in \Gamma : [D - mx] \geq 0\} = \pi_1 ( s_{m}^{-1} [D]).\] 
Recall that $\Wstab(D) = \pi_1(\sigma_m^{-1}[D])$,
where $\sigma_m$ is defined to be the restriction of $s_m$
to the subset $\widetilde \Br^g(\Gamma) \subset \Gamma \times \Sym^{g-1}(\Gamma)$;
note that 
\begin{equation}
\label{eq:w-comparison2}
 \sigma_m^{-1} [D] = s_m^{-1} [D] \cap \widetilde \Br^g(\Gamma) \subset  s_m^{-1} [D] .
\end{equation}
Under the genericity assumption on $[D]$, we have
\begin{equation*}
\label{eq:w-comparison}
 \Wstab(D) = \pi_1 ( \sigma_m^{-1} [D]) \subset \pi_1 ( s_m^{-1} [D]) = W(D).
\end{equation*}
Using part (b), this observation implies that a generic Weierstrass locus
$W(D)$ contains at least $g(n-g+1)$ points.

We consider when $W(D)$ can be strictly larger than $\Wstab(D)$.
By \eqref{eq:w-comparison2}, this happens only if $s_m^{-1}[D]$ 
is not contained in $\widetilde{\Br}(\Gamma)$;
equivalently, only if $[D]$ lies in the image of
$ (\Gamma \times \Sym^{g-1}(\Gamma)) \backslash \widetilde{\Br}(\Gamma)$
under $s_m$.


\ul{Claim}: The image $s_m(\, (\Gamma \times \Sym^{g-1}(\Gamma)) \backslash \widetilde{\Br}(\Gamma) \,)$
has dimension $g-1$ in $\Pic^{m+g-1}(\Gamma)$.

It is clear that $s_m$ is piecewise affine on $\Gamma \times \Sym^{g-1}(\Gamma)$,
with domains of linearity  indexed by $g$-tuples of edges
$(e_1; e_2 ,\ldots, e_g)$,
up to reordering the edges $e_2, \ldots, e_g$.
(Here we choose an arbitrary combinatorial model $(G,\ell)$ for $\Gamma$.)
The edges $e_i$ are not necessarily distinct.

If the edges $(e_1; e_2,\ldots, e_g)$
form the complement of a spanning tree $T$ in $G$,
then the corresponding domain is in $\widetilde \Br^g(\Gamma)$;
namely, it is the cell $\widetilde C_{T,e_1}$ in the notation of Section~\ref{sec:wstab-setup}.
Conversely, if the edges $(e_1; e_2,\ldots, e_g)$
are not the complement of a spanning tree in $G$,
then either some edge is repeated or the edges contain a cut set of $G$.
In either case, 
the fibers of  $s_m : \Gamma \times \Sym^{g-1}(\Gamma) \to \Pic^{m+g-1}(\Gamma)$
have dimension at least $1$ over the interior of the corresponding domain
(see \cite[Proposition 13]{HMY}),
so the image of this domain
under $s_{m}$ has dimension at most $g-1$.
This proves the claim.

The claim implies that for a generic divisor class $[D]$,
the preimage $s_m^{-1}[D]$ is contained in $\widetilde \Br^g(\Gamma)$.
By \eqref{eq:w-comparison2} this implies $W(D) = \Wstab(D)$, as desired.
\end{proof}

\section{Distribution of Weierstrass points}
\label{sec:equidistribution}
In this section we prove  Theorem~\ref{thm:equidistribution}.
We show that for a degree-increasing sequence of generic divisors on a metric graph,
the Weierstrass points become distributed 
with respect to the Zhang canonical measure 
(defined in Section \ref{subsec:resistance}).
We also give a quantitative version of this distribution result,
Theorem~\ref{thm:quant-equidistribution}.

Our proofs of Theorems~\ref{thm:equidistribution} and \ref{thm:quant-equidistribution}
work unchanged when $W(D)$ is replaced by the stable Weierstrass locus $\Wstab(D)$.

\subsection{Examples}
First we consider some low genus examples of Weierstrass points 
converging to a limiting distribution. 
\begin{eg}[Genus zero metric graph] 
Let $\Gamma$ be a genus zero metric graph.
For any divisor $D_{\en}$, 
the associated Weierstrass locus $W(D_{\en}) $ is empty so $\delta_{\en} = 0$.
All edges are bridges, so the canonical measure is $\mu = 0$.
\end{eg}

\begin{eg}[Genus one metric graph] 
Let $\Gamma$ be a genus one metric graph 
which consists of a loop of length $L$.
For a divisor $D_{\en}$ of degree $\en$, 
the Weierstrass locus $W_\en = W(D_\en)$ consists of $\en$ evenly-spaced points (``torsion points'') around the loop. 
The distance between adjacent points is ${L}/{\en}$, 
so on a segment $e$ of length $\length{e}$ the number of Weierstrass points is bounded by
\[ \frac{\length{e}}{L/\en} - 1 \leq \#(W_{\en}\cap e) \leq \frac{\length{e}}{L/\en} + 1 .\]
This means the associated discrete measure 
$\delta_{\en} = \frac1n \sum_{x\in W_\en} \delta_x$ satisfies
\[ \delta_\en(e) = \frac{\#(W_{\en}\cap e)}{\en} \qquad\Rightarrow\qquad
\frac{\length{e}}{L} - \frac1{\en} \leq \delta_{\en}(e)    
\leq \frac{\length{e}}{L} + \frac1{\en} .\]
Hence $ \delta_{\en}(e)  \to \frac{\length{e}}{L} = \mu(e)$
as $\en\to \infty$.
\end{eg}

\subsection{Proofs}
We now address the limiting distribution of Weierstrass points 
$W(D_n)$ as $n\to \infty$ 
in the case of an arbitrary metric graph $\Gamma$.

\begin{lem}
\label{lem:wfinite}
Suppose the Weierstrass locus $W(D)$ is finite. 
Let $r = r(D)$.
\begin{enumerate}[(a)]
\item If $x$ is in the interior of a segment, $\reddiv{x}{D}$ contains at most $r+1$ chips at $x$.

\item If $x$ is in the interior of a segment $e \subset \Gamma$,
$\reddiv{x}{D}$ contains at most $r+1$ chips on $e$ (including its endpoints).

\end{enumerate}
\end{lem}
\begin{proof}
(a) Suppose $\reddiv{x}{D}$ contains $r+2$ chips at $x$.
 Then for sufficiently small $\epsilon > 0$
we can move $r+1$ of these chips together for a distance $\epsilon$ in one direction,
while moving $1$ chip a distance $(r+1)\epsilon$ in the other direction.
This gives an $\epsilon$-length interval in $W(D)$, a contradiction.

(b)
Suppose $\reddiv{x}{D}$ contains $r+2$ chips on the closed segment $e$.
Note that at least $r$ of these chips must be at $x$, in the interior of $e$.
By chip-firing, we may move all $r+2$ chips to a single point $x' $ in the interior
of $ e$. Then part (a) applies.
\end{proof}


\begin{customthm}{B}
Let $\{ D_{\en} : \en\geq 1\} $ be a sequence of  divisors on $\Gamma$
with $\deg D_{\en} = \en$. 
Let $W_{\en}$ be the Weierstrass locus of $D_{\en}$.
Suppose each $W_\en$ is a finite set, and let
\[ \delta_{\en} = \frac1{\en} \sum_{x\in W_{\en}} \delta_x\]
denote the normalized discrete measure on $\Gamma$ associated to $W_{\en}$. 
Then as $\en\to \infty$, the measures $\delta_{\en}$ converge weakly 
to the Zhang canonical measure $\mu$ on $\Gamma$.
\end{customthm}

Recall that by definition of weak convergence, 
Theorem~\ref{thm:equidistribution} says that for any continuous function $f : \Gamma \to \RR$, 
as $\en \to \infty$ we have convergence 
\[ \frac1{\en} \sum_{x\in W_{\en}}f(x) 
= \int_\Gamma f(x) \delta_{\en}(dx) 
\quad\to \quad
\int_\Gamma f(x) \mu(dx) .\]

\begin{proof}[Proof of Theorem~\ref{thm:equidistribution}]
To show weak convergence of measures on $\Gamma$ it suffices to 
show convergence when integrated against step functions.
Hence it suffices to integrate the measures against the 
indicator function of an arbitrary segment of $\Gamma$.

Let $e$ be a segment in the metric graph $\Gamma$ of length $\length{e}$, 
with endpoints $\startv$ and $\tailv$.
Let $W_{\en} \cap e$ 
denote the set of Weierstrass points of $D_\en$ lying on the segment $e$.
It suffices to show that 
\begin{equation}
\label{eq:convergence-e}
 \lim_{\en\to\infty} \frac{\#(W_{\en} \cap e) }{\en} =  \mu(e).
\end{equation}
Recall that by Proposition \ref{prop:canonical-formula},
\[ \mu(e) = \frac{\length{e}}{\length{e} + \lengtheff{\Gamma \backslash e}}\]
where 
$\lengtheff{\Gamma\backslash e}$  denotes the effective resistance 
between the endpoints of $e$ after the interior of $e$ is removed from $\Gamma$.
(If $\Gamma \backslash e$ is disconnected, 
 $\lengtheff{\Gamma \backslash e} = +\infty$ and $\mu(e) = 0$.)
We prove \eqref{eq:convergence-e} by relating each side to  
the slope of a piecewise linear function on $\Gamma$.

For the right-hand side of \eqref{eq:convergence-e}, 
consider the voltage function $\potent{t}{s} : \Gamma \to \RR$ 
(see Section \ref{sec:voltage}).
The voltage drop in $\Gamma$ between endpoints of $e$ is
the effective resistance
\[ \potent{\tailv}{\startv}(\startv) - \potent{\tailv}{\startv}(\tailv) 
= r(\startv,\tailv) = \frac{\length{e} \lengtheff{\Gamma\backslash e}}{\length{e} + \lengtheff{\Gamma\backslash e}},\]
by the parallel rule for effective resistance.
Thus we have
\begin{align} 
\label{eq:j-slope}
\frac{\potent{t}{s}(s) - \potent{t}{s}(t)}{\length{e}} 
&= \frac{ \lengtheff{\Gamma\backslash e}}{\length{e} + \lengtheff{\Gamma\backslash e}} 
= 1-\frac{\length{e}}{\length{e} + \lengtheff{\Gamma\backslash e}} 
= 1 - \mu(e).
\end{align}
(Recall that this slope can be interpreted as the current flowing along the segment $e$ 
when a unit current is forced from $\startv$ to $\tailv$,
see Section~\ref{sec:voltage}.)

To connect $\potent{t}{s}$ to the left-hand side of \eqref{eq:convergence-e}, 
we consider a sequence of piecewise linear functions 
which are discrete approximations of $\potent{t}{s}$,
as in Proposition~\ref{prop:voltage-approx},
and show that certain slopes in these functions 
are related to the number of Weierstrass points. 

Let $f_{\en}$ be the piecewise $\ZZ$-linear function on $\Gamma$ satisfying
\[ \Divisor(f_{\en}) = \reddiv{t}{D_{\en}} - \reddiv{s}{D_{\en}}
\qquad\text{and}\qquad f_{\en}(t) = 0. \]
(Recall that $\reddiv{x}{D}$ denotes the $x$-reduced divisor linearly equivalent to $D$.)
By Proposition \ref{prop:voltage-approx},  as $\en\to \infty$ we have uniform convergence 
\begin{equation}
\label{eq:f-to-j}
 \frac1{\en} f_{\en} \to \potent{\tailv}{\startv}.
 \end{equation}
Thus to show \eqref{eq:convergence-e} using \eqref{eq:j-slope} and \eqref{eq:f-to-j}, 
it suffices to show that
\begin{equation}
\label{eq:approx-limit}
\lim_{\en\to\infty} \frac1{\en} \left( \frac{f_{\en}(s) - f_{\en}(t)}{\length{e}}   \right)
= 1 - \lim_{\en\to\infty}\frac{\# (W_{\en}\cap e)}{\en}.
\end{equation}

We first give an intuitive explanation for \eqref{eq:approx-limit}:
the slope of the function $f_{\en}$ on a directed segment is equal to 
the net flow of chips across the segment,
as we move from $\reddiv{\startv}{D_{\en}}$ to $\reddiv{\tailv}{D_{\en}}$ along any path in the linear system $|D_{\en}|$.
If we follow $\reddiv{x}{D_{\en}}$ as $x$ varies from  $\startv$ to  $\tailv$, 
we have $\en-g$ chips moving in the ``forward'' direction of $e$ (following $x$)
and some number of chips moving in the reverse direction one-by-one.
The number of ``reverse-moving'' chips is equal to $\#(W_{\en}\cap e)$,
since $x$ is in $W_n$ exactly when $\reddiv{x}{D_{\en}}$ has an ``extra'' chip at $x$,
i.e. when the $\en-g$ chips on $x$ collide with a reverse-moving chip.
Thus the net number of chips moving across the segment $e$ is equal to 
$(\en-g) - \#(W_{\en} \cap e)$,
up to some bounded error due to boundary behavior.
This yields \eqref{eq:approx-limit}
after dividing by $\en$ and taking $\en \to \infty$.

Now we give a rigorous argument.
Let $w_1, w_2, \ldots, w_m$ denote the Weierstrass points on $e$, 
ordered from $\startv$ to $\tailv$, 
so that $m = \#(W_\en \cap e)$.
Here we use the hypothesis that $W_n$ is finite.
(Note that $m = m_n$ depends on $n$ and $e$.)

We partition the segment $e = [s,t]$ into subintervals
$[s,w_1], [w_1,w_2], \ldots, [w_m,t]$.
(It is possible that the intervals $[s,w_1]$ and $[w_m,t]$ are degenerate.)
Let $\length{[w_i,w_{i+1}]}$ denote the length of the segment 
$[w_i,w_{i+1}] \subset e$.
We have
\begin{equation*}
\length{e} = \length{[s,w_1]} + \length{[w_1,w_2]} + \cdots + \length{[w_{m-1},w_m]} + \length{[w_m,t]} .
\end{equation*}

For each $i = 1, 2, \ldots, m-1$,
let $g_\en^{(i)}$ denote the function in $\PL_\ZZ(\Gamma)$ satisfying
\[
\Divisor( g_\en^{(i)} ) = \reddiv{w_{i+1}}{D_\en} - \reddiv{w_i}{D_\en},
\]
and let $g_\en^{(0)}$ and $g_\en^{(m)}$
denote functions satisfying
\begin{equation*}
\Divisor( g_\en^{(0)}) = \reddiv{w_1}{D_\en} - \reddiv{s}{D_\en},
\qquad\text{and}\qquad
\Divisor( g_\en^{(m)} ) = \reddiv{t}{D_\en} - \reddiv{w_m}{D_\en}.
\end{equation*}
By adding an appropriate constant, we may assume that
$g_\en^{(i)}(t) = 0$ for each $i = 0, 1, \ldots, m$.
By telescoping of poles and zeros, we have 
\[ \Divisor( f_n ) = \Divisor( g_n^{(0)} ) + \Divisor( g_n^{(1)} ) + \cdots + \Divisor( g_n^{(m)}). \]
With the additional constraint that $f_n(t) = \sum_i g_n^{(i)}(t)=0$, this implies that
\begin{equation}
\label{eq:f-gsum}
 f_n = g_n^{(0)} + g_n^{(1)} + \cdots + g_n^{(m)} .
\end{equation}
Thus we can compute $f_n(s) - f_n(t)$ by summing
$ \sum_{i=0}^m \left( g^{(i)}(s) - g^{(i)}(t) \right) $.

To analyze the slopes of $g^{(i)}$ on segment $e$,
we make use of Lemma~\ref{lem:wfinite}.
This information is sufficient to deduce all slopes over $e$.
We may assume 
 that $r(D_\en)  = \en - g$,
since this holds for $\en \geq 2g-1$.

For $i = 1, 2, \ldots, m-1$, 
the function $g_n^{(i)}$ has slope $-(n-g)$ on the interval $[w_i, w_{i+1}]$,
and slope $1$ on $e $ outside of this interval. 
See Figure~\ref{fig:w-mid}.
\begin{figure}[h]
\centering
\begin{tikzpicture}[xscale=0.8]
	\coordinate (O) at (0,0);
	\coordinate (A) at (2,0.2);
	\coordinate (B) at (3,-1.2);
	\coordinate (C) at (10,-0.5);
	\coordinate (Od) at (0,-2);
	\coordinate (Ad) at (2,-2);
	\coordinate (Bd) at (3,-2);
	\coordinate (Cd) at (10,-2);
	
	\draw (O) -- node[above] {$1$} (A) 
		-- node[above right] {$-(n - g)$} (B) 
		-- node[above] {$1$} (C);
	\draw (Od) -- (Ad) -- (Bd) -- (Cd);
	\draw[dotted] (O) -- +(-0.5,0);
	\draw[dotted] (Od) -- +(-0.5,0);
	\draw[dotted] (C) -- +(0.5,0);
	\draw[dotted] (Cd) -- +(0.5,0);
	
	\fill (Od) circle (2pt);
	\fill (Ad) circle (2pt);
	\fill (Bd) circle (2pt);
	\fill (Cd) circle (2pt);
	
	\node[below] at (Od) {$s$};
	\node[below] at (Ad) {$w_i$};
	\node[below] at (Bd) {$w_{i + 1}$};
	\node[below] at (Cd) {$t$};
	
	\node[left=0.8] at (Od) {segment $e$};
	\node[left=0.7] at (O) {$g_n^{(i)}$};
\end{tikzpicture}
\caption{Function $g_{\en}^{(i)}$ having 
    zeros $\reddiv{w_{i+1}}{D_\en}$ and poles $\reddiv{w_i}{D_\en}$,
    with slopes are indicated above each affine part.}
\label{fig:w-mid}
\end{figure}

Thus we have
\begin{align}
\label{eq:w-mid}
g_n^{(i)}(s) - g_n^{(i)}(t) &= (\en - g ) \length{[w_i,w_{i+1}]} - \length{[s,w_i]} - \length{[w_{i+1},t]} \nonumber \\
 &= (\en - g + 1) \length{[w_i,w_{i+1}]} - \length{e} .
\end{align}

For $i = 0$ and $i=m$, to write an expression for $g_n^{(i)}(x) - g_n^{(i)}(t)$
we need to set additional notation.
If $\reddiv{\startv}{D_\en}$ has a chip in the interior of $e$,
let   $y$ be the position of this chip (which is unique
by Lemma~\ref{lem:wfinite});
otherwise, let $y = \tailv$.
Similarly, let $z$ be the position of the unique chip of 
$\reddiv{\tailv}{D_\en}$ in the interior of $e$ if it exists;
otherwise let $z = \startv$.
We have
\begin{align}
\label{eq:w-start}
g_n^{(0)}(s) - g_n^{(0)}(t) &= (\en - g ) \length{[s,w_{1}]} - \length{[w_1,y]} \nonumber  \\
 &= (\en - g + 1) \length{[s,w_1]} - \length{[s,y]} 
\end{align}
and
\begin{align}
g_n^{(m)}(s) - g_n^{(m)}(t) &= (\en - g ) \length{[w_{m},t]} - \length{[z,w_m]}  \nonumber \\
\tag{\ref{eq:w-start}'}
 &= (\en - g + 1) \length{[w_m,t]} - \length{[z,t]} . 
\end{align}
\begin{figure}[h]
\centering
\begin{tikzpicture}[xscale=0.8]
	\coordinate (O) at (0,2.2);
	\coordinate (A) at (1,0.8);
	\coordinate (B) at (8,1.5);
	\coordinate (C) at (10,1.5);
	\coordinate (Od) at (0,0);
	\coordinate (Ad) at (1,0);
	\coordinate (Bd) at (8,0);
	\coordinate (Cd) at (10,0);
	
	\draw (O) -- node[above right] {$-(n - g)$} (A) 
		-- node[above] {$1$} (B) 
		-- node[above] {$0$} (C);
	\draw (Od) -- (Ad) -- (Bd) -- (Cd);
	\draw[dotted] (O) -- +(-0.5,0);
	\draw[dotted] (Od) -- +(-0.5,0);
	\draw[dotted] (C) -- +(0.5,0);
	\draw[dotted] (Cd) -- +(0.5,0);
	
	\fill (Od) circle (2pt);
	\fill (Ad) circle (2pt);
	\fill (Bd) circle (2pt);
	\fill (Cd) circle (2pt);
	
	\node[below] at (Od) {$s$};
	\node[below] at (Ad) {$w_1$};
	\node[below] at (Bd) {$y$};
	\node[below] at (Cd) {$t$};
	
	\node[left=0.8] at (Od) {segment $e$};
	\node[left=0.7] at (O) {$g_n^{(0)}$};
\end{tikzpicture}
\caption{Function $g_{\en}^{(0)}$ 
having zeros $\reddiv{w_1}{D_\en}$ and poles $\reddiv{s}{D_\en}$.}
\label{fig:w-start}
\end{figure}

Thus adding the expressions \eqref{eq:w-mid} and \eqref{eq:w-start} together,
by \eqref{eq:f-gsum} we have
\begin{align*}
f_\en(s) - f_\en(t) &= (\en-g+1)\big( \length{[s,w_1]}+\length{[w_1,w_{2}]} + \cdots + \length{[w_{m-1},w_m]} + \length{[w_m,t]} \big) \\
& \qquad - \length{[s,y]} - (m-1)\length{e} - \length{[z,t]} \\
 &= (\en -g + 1) \length{e} - (m-1)\length{e} - \length{[s,y]} - \length{[z,t]} \\
 &= (\en -g -m + 2) \length{e}  - \length{[s,y]} - \length{[z,t]} \\
 &= (\en - g - m) \length{e} + (\length{e}- \length{[s,y]}) + (\length{e} - \length{[z,t]}) \\
 &= (\en - g - m)\length{e} + \length{[y,t]} + \length{[s,z]} .
\end{align*}
Since $0 \leq \length{[y,t]} + \length{[s,z]} \leq 2\length{e}$
and $m = \#(W_\en \cap e)$,
this shows that
\begin{equation*}
n - g - \#(W_\en \cap e) \leq \frac{f_n(s) - f_n(t)}{\length{e}} \leq n - g + 2 - \#(W_\en \cap e).
\end{equation*}
Dividing by $\en$ and taking the limit $\en \to \infty$ yields \eqref{eq:approx-limit}
as desired.
\end{proof}

\begin{thm}
Consider the setup of Theorem~\ref{thm:equidistribution}.
\begin{enumerate}[(a)]
\item 
Suppose each $[D_\en]$ is generic in $\Pic^\en(\Gamma)$.
Then each $W_n$
is finite and we have weak convergence $\delta_\en \to \mu$.

\item Let $\Wstab_\en = \Wstab(D_\en)$ be the stable Weierstrass locus,
and define $\delta^\text{st}_\en$ analogously to $\delta_\en$.
For any divisors $\{ D_{\en} : \en \geq 1\}$
we have weak convergence $\delta^\text{st}_\en \to \mu$.
\end{enumerate}
\end{thm}
\begin{proof}
(a) This is part of Theorem~\ref{thm:w-finite}.

(b) We may follow the same argument used in Theorem~\ref{thm:equidistribution},
except in place of $\reddiv{x}{D_\en}$
we consider the ``stable reduced divisor''
\[ \reddivst{x}{D_\en} := (n-g)x + \br[D_\en - (n-g)x] .\]
With this change in the definitions of $f_\en$ and $g_\en^{(i)}$,
 equations \eqref{eq:w-mid} and \eqref{eq:w-start} still hold,
as does the convergence \eqref{eq:f-to-j}.
\end{proof}

\begin{customthm}{C}[Quantitative distribution of $W(D)$]
Let $\Gamma$ be a metric graph of genus $g$,
let $D_{\en}$ be a  divisor class of degree $\en > g$ 
and let $W_{\en}$ denote the Weierstrass locus of $D_{\en}$.
Suppose $W_\en$ is finite.
Let $\mu$ denote the Zhang canonical measure on $\Gamma$.
\begin{enumerate}[(a)]
\item 
\label{it:w-quantitative}
For any segment $e$ in $\Gamma$,
\begin{equation*}
\en \mu(e) - 2g \leq \# (W_{\en}\cap e) \leq \en \mu(e) + g + 2.
\end{equation*}

\item 
\label{it:w-nonempty} 
If $e$ is a segment of $\Gamma$ with   $\mu(e) > \frac{2g}{\en}$,
then $e$ contains at least one Weierstrass point of $D_{\en}$.

\item 
\label{it:w-integration}
For a fixed continuous function $f : \Gamma \to \RR$,
\begin{equation*}
\frac1{\en}\sum_{x \in W_{\en}} f(x) = \int_\Gamma f(x) \mu(dx) + O\left(\frac1{\en}\right).
\end{equation*}

\end{enumerate}
\end{customthm}

\begin{proof}
It is clear that part \eqref{it:w-nonempty} follows from part \eqref{it:w-quantitative}, 
since $\#(W_{\en} \cap e)$ must be an integer.
Part \eqref{it:w-integration} is a straightforward extension of \eqref{it:w-quantitative}.

We now prove part (a).
Let $f_{\en}$ be the piecewise linear function satisfying 
$\Divisor(f_{\en}) = \reddiv{\tailv}{D_{\en}} - \reddiv{\startv}{D_{\en}}$
and $f_{\en}(\tailv) = 0$,
 where $\startv$ and $\tailv$ are the endpoints of $e$.
By Proposition \ref{prop:quant-voltage-approx}, 
we have
\[ |(f_{\en} - (\en-g) \potent{\tailv}{\startv})'(x)| \leq g\]
so
\[ |f_{\en}'(x)| \leq (\en - g)|j'(x)| + g.\]
Recall that for $x$ on the segment $e$,
$ |j'(x)| = 1 - \mu(e)$.
Thus we have the bound
\[ | f_{\en}'(x)| \leq \en - \en\mu(e) + \mu(e) g .\]
Moreover the proof of Theorem \ref{thm:equidistribution} shows that
\[ \en - g - \#(W_{\en}\cap e) \leq |f_{\en}'(x)|. \]
Combining these inequalities gives
\[ \en\mu(e) - (1 + \mu(e)) g  \leq \# (W_{\en}\cap e).\]
Finally, the inequality $\mu(e) \leq 1$ from Corollary \ref{cor:canonical-bound} yields
the lower bound in (a).

We similarly obtain the upper bound
\[ \#(W_{\en} \cap e) \leq \en\mu(e) + g + 2  \]
by combining the inequalities
\[ \en - \en\mu(e)-(2 - \mu(e)) g \leq |f_{\en}'(x)| \quad\text{and}\quad 
 |f_{\en}'(x)| \leq \en-g - \#(W_{\en}\cap e) + 2\]
 and $\mu(e) \geq 0$ from Corollary \ref{cor:canonical-bound}.
\end{proof}

\appendix

\section{Theta intersections}

Throughout this section, we assume that the divisor class $[D]$
is {\em (Riemann--Roch) nonspecial}, 
meaning that its rank satisfies 
\begin{equation*}
r(D) = \begin{cases}
\deg(D) - g & \text{if } \deg(D) \geq g, \\
 -1 & \text{otherwise}.
\end{cases}
\end{equation*}
A generic divisor class in $\Pic^n(\Gamma)$ is nonspecial.
If $n\geq 2g-1$, all divisors in $\Pic^n(\Gamma)$ are nonspecial.

In this appendix we give an alternate description of 
the Weierstrass locus $W(D)$ as the intersection of two 
polyhedral subcomplexes of complementary dimension in $\Pic^{g-1}(\Gamma)$.
This allows us to give an alternate proof 
that $W(D)$ is  finite
for a generic divisor class $[D]$.
In this perspective, the stable Weierstrass locus $\Wstab(D)$
naturally appears as the stable tropical intersection of these two subsets.

\subsection{Intersection with \texorpdfstring{$\Theta$}{Theta}}
\label{subsec:theta-intersection}
Recall that the {\em theta divisor} $\Theta \subset \Pic^{g-1}(\Gamma)$ 
is the space of degree $g - 1$ divisor classes which have an effective representative;
\[ \Theta = \{ [D]\in \Pic^{g-1}(\Gamma) : [D]\geq 0\}.\]

Given a divisor $D$ of degree $n \geq g$, 
let $\Phi_D: \Gamma \to \Pic^{g-1}(\Gamma)$ denote the map
\[ 
	\Phi_D: x \mapsto [D - (\en - g + 1)x] .
  \]
If $D$ has degree $n < g$ let $\Phi_D: x \mapsto [D]$ be the constant map.
Note that the map $\Phi_D$ depends only on the divisor class $[D]$.
If  $D$ is nonspecial,
the Weierstrass locus of $D$ is equal to the  intersection $\Phi_D(\Gamma) \cap \Theta$,
 pulled back to $\Gamma$ from $\Pic^{g-1}(\Gamma)$.
\begin{prop}
\label{prop:theta-intersection}
Let $D$ be a divisor of degree $n\geq g$,
and let $\Phi_D : \Gamma \to \Pic^{g-1}(\Gamma)$ be the map
$\Phi_D(x) = [D - (n-g+1)x]$.
If $D$ is a nonspecial,
\begin{equation*}
 W(D) = \Phi_D^{-1} ( \Phi_D(\Gamma) \cap \Theta) .
\end{equation*}
\end{prop}
\begin{proof}
This follows from the definition of Weierstrass locus, 
if $D$ has rank $n-g$.
\end{proof}

\begin{prop}
\label{prop:phi-injective}
Suppose $\Gamma$ is a bridgeless metric graph.
If $D$ has degree $ \en \geq g$, 
the map $\Phi_D : \Gamma \to \Pic^{g-1}(\Gamma)$ is locally injective
(i.e. an immersion).
\end{prop}
\begin{proof}
The map $\Phi_D$ may be expressed as a composition of three maps
\[ \Phi_D : \Gamma \xrightarrow{\alpha} \Pic^1( \Gamma ) \xrightarrow{\beta} 
 \Pic^{\en-g+1}(\Gamma) \xrightarrow{\gamma} \Pic^{g-1}( \Gamma),\]
where $\alpha$ sends $x \mapsto [x]$, $\beta$ sends $[E] \mapsto [(\en-g+1)E]$, and $\gamma$ sends $[E] \mapsto [D-E]$.
The map $\gamma = \gamma_D$ is a homeomorphism.
The map $\beta$ is a $(\en - g+1)^g$-fold covering map, so it is a local homeomorphism if $\en \geq g$.
Thus it suffices to verify that the first map $\alpha$ is locally injective. 

This follows from the Abel--Jacobi theorem for metric graphs,
see e.g. Baker--Faber \cite[Theorem 4.1 (3)(4)]{BF2}.
Note that $\Pic^1(\Gamma)$ is (non-canonically) isomorphic to the Jacobian 
$\Jac(\Gamma) = \Pic^0(\Gamma)$ 
by choosing a basepoint $x_0$ to subtract.
\end{proof}

If $\Gamma$ contains bridge segments, 
let $\Gamma_\mathrm{/(br)}$ denote the metric graph obtained from $\Gamma$
by contracting all bridges.
Let $S_\mathrm{(br)} \subset \Gamma_\mathrm{/(br)}$ denote the 
set of points which were bridges in $\Gamma$.

\begin{lem}
\label{lem:bridge-contract}
Let $\pi : \Gamma \to \Gamma_\mathrm{/(br)}$
denote the canonical map contracting all bridge segments of $\Gamma$,
which induces $\pi_* : \Pic^\en(\Gamma) \to \Pic^\en(\Gamma_\mathrm{/(br)})$
for all $\en$.
For any divisor $D$ on $\Gamma$,
\[W(D) = \pi^{-1} W(\pi_*(D)) .\]
\end{lem}
\begin{proof}
On $\Gamma$ the linear equivalence map
$x \mapsto [x]$ factors through $\pi: \Gamma \to \Gamma_\mathrm{/(br)}$; i.e. 
we have a commuting diagram
\[ \begin{tikzcd}
 \Gamma \ar[d,swap,"{[x]}"] \ar[r,"\pi"] & \Gamma_\mathrm{/(br)} \ar[d,"{[x]}"] \\
\Pic^1(\Gamma) \ar[r,"\sim"]  & \Pic^{1}(\Gamma_\mathrm{/(br)}) .
\end{tikzcd} \]
Using this, the result is clear from the definition of $W(D)$.
\end{proof}

\begin{lem}
\label{lem:w-exception}
Suppose $S \subset \Gamma$ is a finite set of points in a metric graph $\Gamma$.
For a generic divisor class $[D]$,
the intersection $W(D) \cap S$ is empty.
\end{lem}
\begin{proof}
It suffices to consider when $S = \{s\}$ contains one point.
Assuming $D$ is nonspecial, 
which holds for generic $[D]\in \Pic^\en(\Gamma)$,
we have $s\in W(D)$ if and only if 
\[ [D - (n-g+1)s] \text{ is effective}
\quad\Leftrightarrow\quad
[D] = [(n-g+1)s + E] \text{ for some } [E] \in \Theta .\]
Since $\Theta$ has dimension $g-1$,
the space 
$\{ [D] = [(n-g+1)s + E] : [E] \in \Theta\}$
also has dimension $g-1$.
Hence a generic class $[D]$ has $s \not\in W(D)$.
\end{proof}

\begin{thm}
For a generic divisor class $[D]$ in $\Pic^\en(\Gamma)$, 
the Weierstrass locus $W(D)$ is finite.
\end{thm}

\begin{proof}
If $\en < g$, then a generic divisor class  in $\Pic^\en(\Gamma)$ is not effective
because the image of $\Sym^\en(\Gamma) \to \Pic^\en(\Gamma)$ 
has dimension at most $n$, while $\Pic^\en(\Gamma)$ has dimension $g$.
For a non-effective divisor class $[D]$, the Weierstrass locus $W(D)$ is empty.

Now suppose $\en\geq g$.
By Riemann--Roch, a generic divisor class in $\Pic^\en(\Gamma)$ has rank $r(D) = \en-g$.
(By the above paragraph,  $r(K-D) = -1$ generically.)
Thus, it suffices to show that $W(D)$ is finite for 
a generic nonspecial 
divisor class.

\ul{Case 1: $\Gamma$ is bridgeless}.
As above, let 
$\Phi_D : \Gamma \to \Pic^{g-1}(\Gamma)$ be the map 
$ \Phi_D(x) =  [D-(\en-g+1)x]$.
Recall that the Weierstrass locus $W(D)$ is equal to
\[ W(D) = \Phi_D^{-1}(\Phi_D(\Gamma) \cap \Theta) \subset \Gamma \]
where 
$\Theta = \{ [E] \in \Pic^{g-1}(\Gamma) : [E]\geq 0\}$ 
is the theta divisor.
Note that as $[D]$ varies, the image $\Phi_D(\Gamma)$ varies by translation inside $\Pic^{g-1}(\Gamma)$.

Recall that 
$\Theta$ is a $(g-1)$-dimensional polyhedral complex with finitely many facets,
and 
$\Phi_D(\Gamma)$ is a 1-dimensional polyhedral complex with finitely many segments.
This implies that the space of translations which cause $\Phi_D(\Gamma)$ 
to intersect $\Theta$ non-transversally has dimension at most $g-1$.
Hence for a generic divisor class $[D]$,
the intersection $\Phi_D(\Gamma)\cap \Theta $ is transverse.

Suppose all intersections in $\Phi_D(\Gamma) \cap \Theta$ are transverse,
and occur in the interiors of the respective segment and facet.
Recall that $\Phi_D$ is locally injective by Proposition~\ref{prop:phi-injective}.
If $\Phi_D$ sends $x \in \Gamma$ to a transverse intersection,
then $x$ must have some neighborhood $U  \subset \Gamma$ 
such that $\Phi_D(U \backslash \{x\})$ is disjoint from $\Theta$.
This means that 
$W(D) = \Phi_D^{-1}(\Phi_D(\Gamma) \cap \Theta) $ is a discrete subset of $\Gamma$.
Because $\Gamma$ is compact, this implies $W(D)$ is finite.

\ul{Case 2: $\Gamma$ has bridge segments}.
Let $\pi : \Gamma \to \Gamma_\mathrm{/(br)}$ 
denote the map contracting all bridge segments of $\Gamma$.
Let $S_\mathrm{(br)} \subset \Gamma_\mathrm{/(br)}$
denote the image of all bridges, which is a finite subset of $\Gamma_\mathrm{/(br)}$.
Note that $\pi$ restricts to an injection 
away from $\pi^{-1} S_\mathrm{(br)}$.

By Lemma~\ref{lem:w-exception},
a generic divisor class $[D] \in \Pic^n(\Gamma_\mathrm{/(br)})$
has $W(D)$ disjoint from $S_\mathrm{(br)}$.
Since $\pi$ induces a homeomorphism $\pi_*: \Pic^n(\Gamma) \to \Pic^n(\Gamma_\mathrm{/(br)})$,
this implies that a generic class $[D] \in \Pic^n(\Gamma)$
has $W(\pi_*[D])$ disjoint from $S_\mathrm{(br)}$.
The result then follows from Lemma~\ref{lem:bridge-contract}
and Case 1.
\end{proof}

\subsection{Stable Weierstrass locus}
In this section we describe the relation of the current setup, involving the 
theta divisor $\Theta$, 
and the stable Weierstrass locus defined in Section~\ref{sec:wstab}.
\begin{prop}
Suppose $\Gamma$ is a bridgeless metric graph of genus $g$.
Let $D$ be a divisor of degree $g$, and let $\Phi_D : \Gamma \to \Pic^{g-1}(\Gamma)$
send $\Phi_D(x) = [D - x]$.
Then the break divisor $\br[D]$ is equal to
\[ \br[D] = \Phi_D^{-1}( \Phi_D(\Gamma) \cap^\mathrm{st} \Theta)\]
where $\Theta$ is the theta divisor and 
$\cap^\mathrm{st}$ denotes stable tropical intersection.
	
(The stable tropical intersection may have multiplicities,
so we interpret the preimage to be a multiset  in $\Gamma$
carrying the same multiplicities.)
\end{prop}
\begin{proof}
Let us denote $\br^*[D] :=  \Phi_D^{-1}( \Phi_D(\Gamma) \cap^\mathrm{st} \Theta)$.
For a generic divisor class $[D]\in \Pic^g(\Gamma)$,
the intersection $\Phi_D(\Gamma) \cap\Theta$ is transverse so 
\[ \br^*[D] = \{ x \in \Gamma : [D-x] \geq 0 \} ,\]
i.e. $\br^*[D]$ contains the support of any effective representative of $[D]$.
Generically, the class $[D]$ contains a single effective representative so
$\br^*: \Pic^g(\Gamma)\to \Sym^g(\Gamma)$ defines a generic section of the linear equivalence map
$\Sym^g(\Gamma) \to \Pic^g(\Gamma)$.

By general properties of stable tropical intersection,
the map $\br^* : \Pic^g(\Gamma) \to \Sym^g(\Gamma)$
is continuous.
But by Theorem~\ref{thm:break-div},
the break divisor map
$\br$ is the unique continuous section of $\Sym^g(\Gamma) \to \Pic^g(\Gamma)$
so we must have $\br^*[D] = \br[D]$.
\end{proof}

\section{Tropicalizing Weierstrass points}
\label{sec:tropicalization}
In this appendix, we describe how the Weierstrass locus for a tropical curve can be related to the Weierstrass locus for an algebraic curve.
The key result is Baker's Specialization Lemma \cite[Lemma 2.8]{Bak}; here we use a more general version given by Jensen--Payne \cite{JP} in the language of Berkovich analytic spaces.

Throughout this section, let $\KK$ denote an algebraically closed field equipped with a nontrivial non-Archimedean valuation $v: \KK^\times \to \RR$;
we assume $\KK$ is complete with respect to~$v$.

\begin{thm}[Specialization Lemma {\cite[Lemma 2.4]{JP}}]
\label{thm:specialization}
Suppose $X$ is a smooth projective algebraic curve over $\KK$.
Let $\Gamma$ be a skeleton on the Berkovich analytification $\Xan$,
let $\rho : \Xan \to \Gamma$ be the retraction to the skeleton
and let $\rho_* : \Div(X) \to \Div(\Gamma)$
denote the induced map on divisors.
Then for any divisor $D \in \Div(X)$,
\begin{equation*}
r_X(D) \leq r_\Gamma(\rho_*(D)) .
\end{equation*}
\end{thm}
Here $r_X$ denotes the dimension of a complete linear system $|D|$ on $X$,
and $r_\Gamma$ denotes the Baker--Norine rank on $\Gamma$
(see Section~\ref{sec:rank}).
The map $\rho_* : \Div(\Xan) \to \Div(\Gamma)$ is called the {\em specialization map} by Baker~\cite{Bak}.

\begin{thm}
\label{thm:w-tropicalize}
Consider the setup of Theorem~\ref{thm:specialization}.
For any divisor $D\in \Div(X)$ such that $\rho_*(D) \in \Div(\Gamma)$
is Riemann--Roch nonspecial, we have
\begin{equation*}
 \rho_*( W_X(D)) \subseteq  W_\Gamma(\rho_*(D)) .
\end{equation*}
\end{thm}
\begin{proof}
The map $\rho_*$ respects degree;
let $n = \deg(D)  = \deg(\rho_*(D))$.
Recall that $\rho_*(D)$ is {\em nonspecial} means that
\[ r_\Gamma(\rho_*(D)) = \max\{ n- g,\, -1\} .\]
In this case, Theorem~\ref{thm:specialization} implies 
$r_X(D) \leq \max\{ n-g,\, -1\}$
while Riemann--Roch implies
$ r_X(D) \geq \max\{ n-g,\, -1\}$ for any divisor.
Thus $r_X(D) = r_\Gamma(\rho_*( D))$.

Let $r$ denote the rank in either sense.
If $x \in W_X(D)$,  we have
\[  r_X(D - (r+1)x) \geq 0 .\]
By Theorem~\ref{thm:specialization} and linearity of $\rho_*$, this implies
\[ r_\Gamma(\rho_*(D - (r+1)x )) = r_\Gamma( \rho_*(D) - (r+1) \rho_*(x) ) \geq 0.\]
This means $\rho_*(x) \in W_\Gamma(\rho_*(D))$ as claimed.
\end{proof}
The conclusion of Theorem~\ref{thm:w-tropicalize}
also holds for $D = K_X$ the canonical divisor, and $\rho_*(K_X) \sim K_\Gamma$.
This was observed by Baker in \cite[Corollary 4.9]{Bak}.

As mentioned in the introduction, the result in Theorem~\ref{thm:w-finite} on the number of tropical Weierstrass points and the classical count of Weierstrass points on curves suggest the following conjecture.

\begin{conj}
	In the setup of Theorem~\ref{thm:specialization}, suppose $\Xan$ and $\Gamma$ have genus $g$, and divisors $D$ and $\rho_*(D)$ have degree $n \geq g$ and are nonspecial.
	Suppose that the Weierstrass points in $W_X(D)$ all have weight one,
	and the Weierstrass locus $W_\Gamma(\rho_*(D))$ contains $g(n - g + 1)$ distinct points.
	Then the specialization map
	\[
		\rho: W_X(D) \to W_\Gamma(\rho_*(D))
	\]
	on Weierstrass points is $(n - g + 1)$-to-$1$.
\end{conj}

\section*{Acknowledgements}
I am grateful to David Speyer for introducing me to tropical geometry and 
guiding my interest in the subject.
I thank Farbod Shokrieh and Matthew Baker for helpful suggestions and comments.
In particular, 
the definition of the stable Weierstrass locus
was a result of  Matthew Baker's advice
that  break divisors should play an essential role.
This work was partially supported by NSF grant DMS-1600223
and a Rackham Predoctoral Fellowship.

\bibliography{weierstrass-ref} 
\bibliographystyle{abbrv}

\end{document}